\documentclass[11pt]{amsart}
\usepackage{amsmath,amssymb,a4wide,scalerel}
\usepackage{mathabx}
\usepackage{color}
\usepackage{xcolor,graphicx}
\usepackage{mathrsfs}
\usepackage{tikz,pgfplots}
\usepackage{subcaption}
\usepackage[normalem]{ulem}
\usepackage{hyperref}
\usepackage{dsfont}
\usepackage{comment}
\usepackage{enumitem}

\newtheorem{theorem}{Theorem}
\newtheorem{lemma}[theorem]{Lemma}
\newtheorem{proposition}[theorem]{Proposition}
\newtheorem{remark}[theorem]{Remark}
\newtheorem{corollary}[theorem]{Corollary}

\newtheorem{assumption}{Assumption}
\newtheorem{example}{Example}

\newcommand{\dt}{\tau}

\newcommand{\R}{{\mathbb R}}
\newcommand{\N}{{\mathbb N}}
\newcommand{\E}{{\mathbb E}}

\newcommand{\e}{\mathrm{e}}

\newcommand*\diff{\mathop{}\!\mathrm{d}}

\usepackage{color}

\author{Charles-Edouard Br\'ehier}
              \address{Universite de Pau et des Pays de l'Adour, E2S UPPA, CNRS, LMAP,Pau, France}
              \email{charles-edouard.brehier@univ-pau.fr}

\author{David Cohen}
              \address{Department of Mathematical Sciences,
              Chalmers University of Technology and University of Gothenburg, 41296~Gothenburg, Sweden}
              \email{\tt david.cohen@chalmers.se}

\author{Yoshio Komori}
              \address{Department of Physics and Information Technology, Kyushu Institute of Technology,
              680-4 Kawazu, Iizuka, 820-8502, Japan}
              \email{\tt komori@phys.kyutech.ac.jp}

\begin{document}

\title[Conformal integrators for linearly damped stochastic Poisson systems]{Stochastic conformal integrators for linearly damped stochastic Poisson systems}

\begin{abstract}
We propose and study conformal integrators for linearly damped stochastic Poisson systems.
We analyse the qualitative and quantitative properties of these numerical integrators: preservation
of dynamics of certain Casimir and Hamiltonian functions,
almost sure bounds of the numerical solutions, and strong and weak rates of convergence under appropriate conditions.
These theoretical results are illustrated with several numerical experiments on, for example,
the linearly damped free rigid body with random inertia tensor or
the linearly damped stochastic Lotka--Volterra system.
\end{abstract}

\maketitle
{\small\noindent
{\bf AMS Classification.} 60H10, 60H35, 65C20, 65C30, 65P10.

\bigskip\noindent{\bf Keywords.} Stochastic differential equations. Linearly damped stochastic Poisson systems.
Casimir and Hamiltonian functions. Geometric numerical integration. Stochastic conformal integrator.
Strong and weak convergence.

\section{Introduction}
The design and analysis of structure-preserving numerical methods,
i.\,e. Geometric Numerical Integration (GNI), has been a major focus of research in
numerical analysis of Ordinary Differential Equations (ODEs) for $35$ years,
see for instance \cite{MR2839393, MR1270017, MR2840298, MR2132573, MR3642447}.
Prominent examples of applications of GNI are Hamiltonian and Poisson systems
of classical mechanics. When such systems are subject to dissipation,
due to a linear (possibly time-dependent) damping term, one speaks of conformal Hamiltonian and Poisson
systems. Efficient numerical methods for conformal ordinary and partial differential equations have been proposed and studied in e.g.
\cite{MR2573261,MR2855436,MR2994297,MR3456972,MR3516867,MR4242161,MR4300344,MR4486639} and references therein.

The focus of the this work is on the design and analysis of conformal exponential integrators for
randomly perturbed linearly damped Poisson systems of the form
\begin{equation}\label{eq:SDEgeneralintro}
\diff y(t)=\Bigl(B(y(t))\nabla H_0(y(t))-\gamma(t)y(t)\Bigr)\diff t+\sum_{m=1}^M B(y(t))\nabla H_m(y(t))\circ\,\diff W_m(t)
\end{equation}
with a structure matrix $B\colon\mathbb R^d\to\mathbb R^{d\times d}$, Hamiltonian functions $H_j\colon\mathbb R^d\to\mathbb R$,
for $j=0,1,\ldots,M$, a damping term $\gamma\colon\mathbb R\to\mathbb R$, and independent standard real-valued Wiener process $W_m$ for
$m=1,\ldots,M$. This Stochastic Differential Equation (SDE) is understood in the Stratonovich sense, which as usual is indicated by the symbol $\circ$ in the SDE. See Section~\ref{sec-sett} for the precise setting and details on the notation.  Examples of systems which belong to the general class of systems~\eqref{eq:SDEgeneralintro} above are given by
\begin{equation}\label{eq:SDEspecificintro}
\diff y(t)=B(y(t))\nabla H(y(t))\bigl(\diff t+c\circ\,\diff W(t)\bigr)-\gamma(t)y(t)\diff t,
\end{equation}
where $c>0$. In the above problem, one takes $M=1$, $W=W_1$, $H_0=H$ and $H_1=cH$ in the SDE~\eqref{eq:SDEgeneralintro}.
Some results which are specific to this subclass of systems will be given in this work.

Our work is built upon recent developments for linearly damped stochastic Hamiltonian systems in \cite{MR3898821,MR4512617}
and for undamped stochastic Lie--Poisson systems \cite{MR4593213}, as well as on early works in the deterministic case from \cite{MR2784654,MR4242161}.

The contributions of our work are the following.
\begin{itemize}
\item We design stochastic exponential integrators for~\eqref{eq:SDEgeneralintro} and we show that quadratic Casimirs are damped accordingly with the evolution law of the exact solution, i.\,e. that the proposed scheme is a stochastic conformal integrator, see Proposition~\ref{prop:confCasimir}. Moreover, for the specific class of systems~\eqref{eq:SDEspecificintro}, we show that if the Hamiltonian function is homogeneous of degree $p$ then its damping behavior is also preserved by the proposed integrator, see Proposition~\ref{prop:balanceNum}.
\item We show that under certain conditions on Casimir and Hamiltonian functions, the results above provide almost sure bounds for the exact and numerical solutions, see Corollaries~\ref{cor:exist} and~\ref{cor:num}. This is a crucial step when the drift and diffusion coefficients of the SDEs~\eqref{eq:SDEgeneralintro}~and~\eqref{eq:SDEspecificintro}
are not assumed to be globally Lipschitz continuous, which happens in some of the considered examples.
\item We prove strong and weak convergence results for the proposed stochastic exponential integrators, under the conditions ensuring almost sure moment bounds on the exact and numerical solutions. We show that in general the strong and weak rates of convergence are equal to $1/2$ and $1$ respectively, see Theorems~\ref{th:strong} and~\ref{th:weak}. Moreover, we show that when $M=1$, the strong rate of convergence is equal to $1$, see Theorems~\ref{th:strongM1} and~\ref{th:strong1}.
\item We provide numerical experiments in order to illustrate the qualitative behavior and the convergence results for the proposed stochastic conformal integrators.
\end{itemize}

The paper is organized as follows. Section~\ref{sec-sett} presents the setting, the main examples and the main qualitative properties
of linearly damped stochastic Poisson systems considered in this article. We then introduce and analyse the qualitative properties of the proposed stochastic conformal exponential integrators in Section~\ref{sec-scheme}. In Section~\ref{sec:convergence}, we state and prove the strong and weak convergence results for our numerical schemes.
Finally, qualitative and quantitative properties of the studied numerical methods are illustrated in several
numerical experiments in Section~\ref{sec-numexp}.

\section{Setting}\label{sec-sett}

In this section, we first provide notation and define the class of SDEs considered in this article.
Next, we describe examples which fit in this class of linearly damped stochastic Poisson systems.
Finally, we study the main qualitative properties of their solutions.

\subsection{Notation}

Let $d$ be a positive integer which denotes the dimension of the considered problem. The Poisson structure is determined by a mapping $B\colon\mathbb{R}^d\to\mathbb{R}^{d\times d}$ which takes values in the set of skew--symmetric matrices, i.\,e. one has $B(y)^T=-B(y)$ for all $y\in\mathbb{R}^d$.

Let the positive integer $M$ denote the dimension of the stochastic perturbation, and consider  independent standard real-valued Wiener processes $W_1=\bigl(W_1(t)\bigr)_{t\ge 0},\ldots,W_M=\bigl(W_M(t)\bigr)_{t\ge 0}$ defined on a probability space denoted by $(\Omega,\mathcal{F},\mathbb{P})$. In addition, consider Hamiltonian functions
$H_0, H_1, \ldots, H_M\colon\mathbb{R}^d\to\mathbb{R}$. Finally, let $\gamma:\mathbb{R}\to\mathbb{R}$ denote the damping function, which is assumed to be continuous.

In this article, we consider two classes of linearly damped stochastic Poisson systems. The first class of systems is defined as
\begin{equation}\label{prob}
\left\lbrace
\begin{aligned}
\text dy(t)&=\Bigl(B(y(t))\nabla H_0(y(t))-\gamma(t)y(t)\Bigr)\,\text dt+\sum_{m=1}^M B(y(t))\nabla H_m(y(t))\circ\,\text dW_m(t),\quad t\ge 0,\\
y(0)&=y_0,
\end{aligned}
\right.
\end{equation}
where $y_0$ is a given initial value (which is assumed to be deterministic). The noise in the SDE~\eqref{prob} is understood in the Stratonovich sense.

Assume that the structure matrix $B$ is a mapping of class $\mathcal{C}^2$, that the Hamiltonian function $H_0$ is of class $\mathcal{C}^2$, and that the Hamiltonian functions $H_1,\ldots,H_M$ are of class $\mathcal{C}^3$. Under those regularity conditions, the SDE~\eqref{prob} admits a unique local  solution (defined for $t\in[0,\mathbf{T})$, where $\mathbf{T}$ is a stopping time with values in $[0,\infty]$), see for instance \cite[Chapter 2]{MR2380366} or \cite[Section~4.8]{MR1214374}.

We also study a second class of linearly damped stochastic Poisson systems, given by
\begin{equation}\label{prob1}
\left\lbrace
\begin{aligned}
\diff y(t)&=B(y(t))\Bigl(\nabla H(y(t))\diff t+c\nabla H(y(t))\circ\diff W(t)\Bigr)-\gamma(t)y(t)\diff t,\quad t\ge 0\\
y(0)&=y_0,
\end{aligned}
\right.
\end{equation}
where $c>0$ measures the size of the random perturbation. The system of SDEs~\eqref{prob1} can be obtained from the first class~\eqref{prob} with a single Wiener process, i.\,e. $M=1$, and with a single Hamiltonian function $H_0=H$ and $H_1=cH$. This class of SDEs is a damped version of randomised Hamiltonian/Poisson systems, see for instance \cite[Chap. V.4]{MR629977}, \cite{MR1735312}, \cite[Sect. 3.1]{MR2408499}, or \cite{MR3210739}. Local well-posedness of~\eqref{prob1} is ensured by assuming that
the structure matrix $B$ is a mapping of class $\mathcal{C}^2$ and that the Hamiltonian function $H$ is of class $\mathcal{C}^3$,
the solution is then defined for $t\in[0,\mathbf{T})$, where $\mathbf{T}$ is a stopping time with values in $[0,\infty]$.

Global well-posedness for~\eqref{prob} and~\eqref{prob1}, i.\,e. having $\mathbf{T}=\infty$ almost surely, requires additional assumptions.
A sufficient condition which exploits the structure of the systems~\eqref{prob} and~\eqref{prob1} will be stated below.




\subsection{Examples}

We now give some examples of linearly damped stochastic Poisson systems. All these examples are considered in the numerical experiments in Section~\ref{sec-numexp}.

\begin{example}[Linearly damped stochastic Hamiltonian systems]\label{exppend}
Let $d=2n$ be an even integer and decompose $y=(y_1,y_2)\in\mathbb{R}^{n}\times\mathbb{R}^n$. Let $I\in\mathbb{R}^{n\times n}$ be the identity matrix in dimension $n$ and consider the symplectic matrix $J=\begin{pmatrix} \phantom{+}0 & I \\ -I & 0\end{pmatrix}\in\mathbb{R}^{d\times d}$. Choosing $B(y)=J^{-1}=\begin{pmatrix} 0 & -I \\ I & \phantom{+}0\end{pmatrix}$ for all $y\in\mathbb{R}^d$ for the structure matrix, the system~\eqref{prob} gives
the linearly damped stochastic Hamiltonian system
$$
\diff
\begin{pmatrix}
y_1(t)\\
y_2(t)
\end{pmatrix}
=
\begin{pmatrix}
-\nabla_{y_2}H_0(y_1(t),y_2(t))\\
\phantom{+}\nabla_{y_1}H_0(y_1(t),y_2(t))
\end{pmatrix}
\diff t
-\gamma(t)
\begin{pmatrix}
y_1(t)\\
y_2(t)
\end{pmatrix}
\diff t
+\sum_{m=1}^{M}
\begin{pmatrix}
-\nabla_{y_2}H_m(y_1(t),y_2(t))\\
\phantom{+}\nabla_{y_1}H_m(y_1(t),y_2(t))
\end{pmatrix}
\circ\,\diff W_m(t)
$$
This class of problems has been studied in \cite{MR4512617,MR3898821} for instance.

Considering the second class of systems~\eqref{prob1}, choosing $H(y)=\frac12y_1^2-\cos(y_2)$, one obtains a linearly damped version of the stochastic mathematical pendulum studied for instance in \cite{MR3210739}:
\begin{equation}\label{pendulum}
\diff
\begin{pmatrix}
y_1(t)\\
y_2(t)
\end{pmatrix}
=
\begin{pmatrix}
-\sin(y_2(t))\\
y_1(t)
\end{pmatrix}
\left(\diff t+c\circ\diff W(t)\right)-\gamma(t)y(t)\diff t.
\end{equation}
Note that the drift and diffusion coefficients in~\eqref{pendulum} have bounded first and second order derivatives. It is thus straightforward to check that this system of SDEs is globally well-posed.
\end{example}

Let us now describe several examples with non-constant structure matrices $B(y)$. These provide linearly damped versions of the stochastic Poisson systems studied in \cite{MR3210739,MR4203112,MR4396382,MR4593213,MR4684229,ephrati2024exponentialmapfreeimplicit,dambrosio2024strongbackwarderroranalysis}.

\begin{example}[Linearly damped stochastic rigid body system]\label{exprigid}
Let $d=3$, denote $y=(y_1,y_2,y_3)\in\mathbb{R}^3$, and let $M=3$. Consider the structure matrix
\[
B(y)=
\begin{pmatrix}
0 & -y_3 & y_2\\
y_3 & 0 & -y_1\\
-y_2 & y_1 & 0
\end{pmatrix}
,
\quad \forall y\in\mathbb{R}^3.
\]
Given two families $(I_1,I_2,I_3)$ and $(\widehat{I}_1,\widehat{I}_2,\widehat{I}_3)$ of pairwise distinct positive real numbers (called principal moments of inertia), defined the Hamiltonian functions $H_0(y)=\frac12\left(\frac{y_1^2}{I_1}+\frac{y_2^2}{I_2}+\frac{y_3^2}{I_3}\right)$ and $H_m(y)=\frac12\frac{y_m^2}{\widehat I_m}$ for $m=1,2,3$ and for all $y\in\mathbb{R}^3$. One then obtains a damped version of the stochastic rigid body system given in \cite[Example~2.4]{MR4593213} when considering the system~\eqref{prob}:
\begin{align}\label{srbintro}
\diff
\begin{pmatrix}
y_1(t)\\
y_2(t)\\
y_3(t)
\end{pmatrix}
&=
\begin{pmatrix}
(I_3^{-1}-I_2^{-1})y_3(t)y_2(t)\\
(I_1^{-1}-I_3^{-1})y_1(t)y_3(t)\\
(I_2^{-1}-I_1^{-1})y_2(t)y_1(t)
\end{pmatrix}
\diff t
-\gamma(t)
\begin{pmatrix}
y_1(t)\\
y_2(t)\\
y_3(t)
\end{pmatrix}
\diff t
+
\begin{pmatrix}
0\\
y_1(t)y_3(t)/\widehat{I}_1\\
y_2(t)y_1(t)/\widehat{I}_1
\end{pmatrix}
\circ\diff W_1(t)\\
&+
\begin{pmatrix}
-y_3(t)y_2(t)/\widehat{I}_2\\
0\nonumber\\
y_1(t)y_2(t)/\widehat{I}_2
\end{pmatrix}
\circ\diff W_2(t)
+
\begin{pmatrix}
y_2(t)y_3(t)/\widehat{I}_3\\
-y_1(t)y_3(t)/\widehat{I}_3\\
0
\end{pmatrix}
\circ\diff W_3(t).
\end{align}
\end{example}

\begin{example}[Linearly damped stochastic Lotka--Volterra system]\label{explotka}
Let $d=3$ and denote $y=(y_1,y_2,y_3)\in\mathbb{R}^3$. Let $a,b$ be two real numbers. Consider the structure matrix
\[
B(y)=y_1^{1-ab}y_2^{b+1}\begin{pmatrix}0 & a & 1\\ -a & 0 & -ab\\-1 & ab & 0\end{pmatrix},\quad \forall~y\in\mathbb{R}^3,
\]
and the Hamiltonian function
\[
H(y)=y_1^{ab}y_2^{-b}y_3,\quad \forall~y\in\mathbb{R}^3.
\]
One then obtains a stochastic version of the linearly damped Lotka--Volterra system from~\cite{MR4242161} when considering the system~\eqref{prob1}:
\begin{align}\label{lotkaintro}
\diff
\begin{pmatrix}
y_1(t)\\
y_2(t)\\
y_3(t)
\end{pmatrix}
=
\begin{pmatrix}
y_1(t)(y_2(t)-aby_3(t))\\
y_2(t)(-a^2by_3(t)-aby_1(t))\\
y_3(t)(-aby_2(t)-ab^2y_1(t))
\end{pmatrix}
\diff t
+c
\begin{pmatrix}
y_1(t)(y_2(t)-aby_3(t))\\
y_2(t)(-a^2by_3(t)-aby_1(t))\\
y_3(t)(-aby_2(t)-ab^2y_1(t))
\end{pmatrix}
\circ\diff W(t)
-\gamma(t)
\begin{pmatrix}
y_1(t)\\
y_2(t)\\
y_3(t)
\end{pmatrix}
\diff t.
\end{align}

\end{example}

\begin{example}[Linearly damped stochastic Maxwell--Bloch system]\label{expmax}
Let $d=3$, denote $y=(y_1,y_2,y_3)\in\mathbb{R}^3$ and $M=1$. Consider the structure matrix
\[
B(y)=\begin{pmatrix}0 & -y_3 & y_2\\ y_3 & 0 & 0\\-y_2 & 0 & 0\end{pmatrix},\quad\forall~y\in\mathbb{R}^3,
\]
and the Hamiltonian functions
$$
H_0(y)=\frac12y_1^{2}+y_3\quad \text{and}\quad H_1(y)=y_3,\quad\forall~y\in\mathbb{R}^3.
$$
One then obtains a damped version of the stochastic Maxwell--Bloch system given in \cite[Example~2.3]{MR4593213} when considering the system~\eqref{prob}:
\begin{align}\label{maxintro}
\diff
\begin{pmatrix}
y_1(t)\\
y_2(t)\\
y_3(t)
\end{pmatrix}
=
\begin{pmatrix}
y_2(t)y_3(t)\\
y_1(t)y_3(t)\\
-y_1(t)y_2(t)
\end{pmatrix}
\diff t
-\gamma(t)
\begin{pmatrix}
y_1(t)\\
y_2(t)\\
y_3(t)
\end{pmatrix}
\diff t
+
\begin{pmatrix}
y_2(t)\\
0\\
0
\end{pmatrix}
\circ\diff W(t).
\end{align}

\end{example}

Finally, we also consider the following stochastic Poisson system in dimension $3$ with a constant structure matrix inspired by \cite{MR4396382}.

\begin{example}[Linearly damped stochastic three-dimensional Poisson system]\label{expcao}
Let $d=3$ and denote $y=(y_1,y_2,y_3)\in\mathbb{R}^3$. Consider the structure matrix
\[
B(y)=\begin{pmatrix}0 & 1 & -1\\ -1 & 0 & 1\\1 & -1 & 0\end{pmatrix},\quad \forall~y\in\mathbb{R}^3,
\]
and the Hamiltonian function
\[
H(y)=\sin(y_1)+\sin(y_2)+\sin(y_3),\quad \forall~y\in\mathbb{R}^3.
\]
One then obtains a damped version of the stochastic Poisson system from \cite{MR4396382} when considering the system~\eqref{prob1}:
\begin{align}\label{caointro}
\diff
\begin{pmatrix}
y_1(t)\\
y_2(t)\\
y_3(t)
\end{pmatrix}
=
\begin{pmatrix}
\cos(y_2(t))-\cos(y_3(t))\\
-\cos(y_1(t))+\cos(y_3(t))\\
\cos(y_1(t))-\cos(y_3(t))
\end{pmatrix}
\diff t
+c
\begin{pmatrix}
\cos(y_2(t))-\cos(y_3(t))\\
-\cos(y_1(t))+\cos(y_3(t))\\
\cos(y_1(t))-\cos(y_3(t))
\end{pmatrix}
\circ\diff W(t)
-\gamma(t)
\begin{pmatrix}
y_1(t)\\
y_2(t)\\
y_3(t)
\end{pmatrix}
\diff t.
\end{align}
Note that the drift and diffusion coefficients in this example have bounded first and second order derivatives. It is thus straightforward to check that this system of SDEs is globally well-posed.

\end{example}

\begin{remark}
In this article, we do not impose that the structure matrix $B(y)$ satisfies the Jacobi identity, and we thus do not investigate the property of the SDE~\eqref{prob} being conformal Poisson symplectic. We refer to~\cite[Sect. 3]{MR3898821} for results in the stochastic Hamiltonian setting, when $B(y)=J^{-1}$ with the standard symplectic matrix $J$, like in Example~\ref{exppend}. The Jacobi identity is verified for Examples~\ref{exprigid} and~\ref{expmax}, which are damped version of the stochastic Lie--Poisson systems studied for instance in~\cite{MR4593213}. The Jacobi identity is also satisfied in Examples~\ref{exppend} and~\ref{expcao}. We mention that even in the deterministic setting conformal Poisson symplectic numerical integrators are not constructed for general classes of problems, but only for some specific examples, see~\cite[Section~VII.4.2]{MR2840298}.
\end{remark}


\subsection{Qualitative properties}

In this section, we study the qualitative behavior of the solutions of the linearly damped stochastic Poisson systems~\eqref{prob} and~\eqref{prob1}. Let us recall that a mapping $C\colon\mathbb{R}^d\to\mathbb{R}$ (of class $\mathcal{C}^1$) is called a Casimir function of undamped deterministic or stochastic versions of~\eqref{prob} ($\gamma\equiv0$) if
\[
\nabla C(y)^TB(y)=0,\quad \forall~y\in\mathbb{R}^d,
\]
see for instance \cite{MR2840298,MR4593213} and references therein. For damped systems, a Casimir function is called a conformal Casimir function.


Morever, let us recall that a function $F\colon\mathbb R^d\to \mathbb R$ is called homogeneous of degree $p$ if one has
\begin{equation*}\label{homog1}
F(\lambda y)=\lambda^p F(y),\quad \forall~\lambda\in(0,\infty),~\forall~y\in\R^d.
\end{equation*}
If the homogeneous function $F$ is of class $\mathcal{C}^1$, the property above yields the identity
\begin{equation*}
\nabla F(y)^Ty=pF(y),\quad \forall~y\in\R^d.
\end{equation*}
For a continuous homogeneous function $F$ of degree $p$, note that one has $F(0)=0$. In addition, if one sets ${\bf m}(F)=\underset{y\in\R^d;~\|y\|=1}\min~F(y)$,
then for all $y\neq 0$ one has
\begin{equation*}\label{homog3}
F(y)=\|y\|^p F\left(\frac{y}{\|y\|}\right)\ge {\bf m}(F)\|y\|^p.
\end{equation*}
The class of homogeneous functions covers the cases of linear functionals $F(y)=\beta^Ty$, where $\beta\in\R^d$ ($p=1$) and of quadratic functionals $F(y)=\frac12y^TDy$, where $D$ is a symmetric matrix ($p=2$), and more generally of homogeneous polynomials of degree $p$. This also covers the case $F(y)=\zeta\prod_{k=1}^dy_k^{p_k}$ with $\zeta\in\mathbb R$ and $p_k\in\mathbb R$ for $k=1,\ldots,d$, see for instance~\cite{MR4242161} in the deterministic context.

\begin{remark}\label{rem:CasH}
Let us discuss the existence and properties of Casimir and Hamiltonian functions for the examples described in Section~\ref{sec-sett}.
\begin{itemize}
\item The linearly damped stochastic pendulum problem from Example~\ref{exppend} does not have a Casimir function.
The Hamiltonian $H(y)=\frac{y_1^2}2-\cos(y_2)$ is not a homogeneous function.
\item The linearly damped stochastic free rigid body from Example~\ref{exprigid} has
the quadratic Casimir function $C(y)=\frac12\left(y_1^2+y_3^2+y_3^2\right)$, which satisfies ${\bf m}(C)=\frac12$. The Hamiltonian functions $H_0$ and $H_m$, for $m=1,2,3$, are homogeneous functions of degree $2$, which satisfy ${\bf m}(H_0)>0$ and ${\bf m}(H_1)={\bf m}(H_2)={\bf m}(H_3)=0$.
\item For the stochastic Lotka--Volterra problem from Example~\ref{explotka}, the Hamiltonian function $H(y)=y_1^{ab}y_2^{-b}y_3$ is homogeneous of degree $ab-b+1$,
which satisfies ${\bf m}(H)=0$. This system has the Casimir function $C(y)=ab\ln(y_1)-b\ln(y_2)+\ln(y_3)$ which is not quadratic.
\item The linearly damped stochastic Maxwell--Bloch system from Example~\ref{expmax} has the quadratic Casimir $C(y)=\frac12(y_2^2+y_3^2)$, which satisfies ${\bf m}(C)=0$.
The Hamiltonian function $H_0(y)=\frac12y_1^2+y_2$ is not homogeneous.
\item The system of SDEs from Example~\ref{expcao} has the quadratic Casimir $C(y)=\frac12y^TDy$ with the matrix
$$
\begin{pmatrix}
1 & 1 & 1 \\
1 & 1 & 1 \\
1 & 1 & 1
\end{pmatrix},
$$
which satisfies ${\bf m}(C)=0$ (since the matrix $D$ is not invertible).
The Hamiltonian of this system is not an homogeneous function.
\end{itemize}
\end{remark}

Let us recall that for the undamped version of the stochastic Poisson system~\eqref{prob}, any Casimir function (of class $\mathcal{C}^2$) is preserved along the exact solutions. Similarly, for the undamped version of the stochastic Poisson system~\eqref{prob1}, the Hamiltonian is preserved along solutions. Such properties do not hold in general for damped versions, however one has the following  result when the Casimir and Hamiltonian functions are assumed to be homogeneous.


\begin{proposition}\label{prop:Casimir}
Consider the linearly damped stochastic Poisson system~\eqref{prob}. Assume that $C$ is a Casimir function of class $\mathcal{C}^2$,
which is an homogeneous function of degree $p\in(0,\infty)$.
Then, the mapping $C$ is a conformal Casimir function for the linearly damped stochastic Poisson system~\eqref{prob}:
if $\bigl(y(t)\bigr)_{t\in[0,\mathbf{T})}$ denotes the solution to~\eqref{prob},
almost surely one has
\[
C(y(t))=\exp\left( -p\int_0^t\gamma(s)\,\diff s  \right) C(y_0),\quad\forall~t\in[0,\mathbf{T}).
\]
Consider the linearly damped stochastic Poisson system with one noise~\eqref{prob1}.
Assume that the Hamiltonian function $H$ is of class $\mathcal C^2$  and is an homogeneous function of degree $q\in(0,\infty)$.
Then, one has the following energy balance:
if $\bigl(y(t)\bigr)_{t\in[0,\mathbf{T})}$ denotes the solution to~\eqref{prob1}, almost surely one has
$$
H(y(t))=\exp\left(-p\int_0^t\gamma(s)\,\diff s  \right) H(y_0),\quad\forall~t\in[0,\mathbf{T}).
$$
\end{proposition}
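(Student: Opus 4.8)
The plan is to apply the Stratonovich chain rule, which — unlike the Itô formula — behaves like ordinary calculus, so that for a $\mathcal{C}^2$ function $G$ one has $\diff G(y(t)) = \nabla G(y(t))^T \circ \diff y(t)$. First I would treat the Casimir case for system~\eqref{prob}. Substituting the SDE~\eqref{prob} into the chain rule gives
\[
\diff C(y(t)) = \nabla C(y(t))^T \Bigl(B(y(t))\nabla H_0(y(t)) - \gamma(t)y(t)\Bigr)\diff t + \sum_{m=1}^M \nabla C(y(t))^T B(y(t))\nabla H_m(y(t)) \circ \diff W_m(t).
\]
Here the Casimir property $\nabla C(y)^T B(y) = 0$ kills both the $B(y)\nabla H_0$ term in the drift and all $M$ diffusion terms, leaving only $\diff C(y(t)) = -\gamma(t)\,\nabla C(y(t))^T y(t)\,\diff t$. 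Then I would invoke the Euler identity for homogeneous functions stated in the excerpt, $\nabla C(y)^T y = p\, C(y)$, to obtain the linear random ODE $\diff C(y(t)) = -p\,\gamma(t)\, C(y(t))\,\diff t$, whose solution is exactly $C(y(t)) = \exp\bigl(-p\int_0^t \gamma(s)\,\diff s\bigr) C(y_0)$; this holds pathwise on $[0,\mathbf{T})$, and the continuity of $\gamma$ makes the integrating-factor argument rigorous.

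For the Hamiltonian balance in system~\eqref{prob1}, the argument is structurally identical. Applying the Stratonovich chain rule to $H$ along the solution of~\eqref{prob1} gives
\[
\diff H(y(t)) = \nabla H(y(t))^T B(y(t))\nabla H(y(t))\,\diff t + c\,\nabla H(y(t))^T B(y(t))\nabla H(y(t)) \circ \diff W(t) - \gamma(t)\,\nabla H(y(t))^T y(t)\,\diff t.
\]
Now the key point is skew-symmetry of $B$: for any vector $v$, $v^T B(y) v = 0$ since $B(y)^T = -B(y)$. Taking $v = \nabla H(y(t))$ annihilates both the first drift term and the entire diffusion term, so only $-\gamma(t)\,\nabla H(y(t))^T y(t)\,\diff t$ survives. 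Using homogeneity of degree $q$, i.e. $\nabla H(y)^T y = q\,H(y)$, yields $\diff H(y(t)) = -q\,\gamma(t)\,H(y(t))\,\diff t$, hence $H(y(t)) = \exp\bigl(-q\int_0^t \gamma(s)\,\diff s\bigr) H(y_0)$. (I note that the statement as written has $p$ in the exponent and introduces $q$ as the degree of $H$; presumably $p=q$ there, or it is a typo, and the correct exponent is the degree of $H$.)

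The steps are all elementary once the Stratonovich chain rule is in hand, and I do not anticipate a genuine obstacle. The one point requiring a little care is justifying the use of the ordinary-calculus chain rule: it is valid because $C$ (resp. $H$) is $\mathcal{C}^2$ and the coefficients of the SDEs are smooth enough (the regularity hypotheses already imposed in Section~\ref{sec-sett} for local well-posedness — $B\in\mathcal{C}^2$, $H_m\in\mathcal{C}^3$, $H\in\mathcal{C}^3$ — are exactly what make the Stratonovich corrections well-defined), and because everything is carried out on the stochastic interval $[0,\mathbf{T})$ where the local solution is defined. The homogeneity degree being strictly positive, $p,q\in(0,\infty)$, is used only to make the Euler identity meaningful; no sign or integrability condition on $\gamma$ is needed since the resulting scalar equation is linear and solved explicitly by an integrating factor.
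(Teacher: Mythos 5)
Your proposal is correct and follows essentially the same route as the paper's proof: the Stratonovich chain rule, annihilation of the drift and diffusion terms via the Casimir identity $\nabla C^TB=0$ (respectively skew-symmetry giving $\nabla H^TB\nabla H=0$), the Euler identity $\nabla F(y)^Ty=pF(y)$ for homogeneous functions, and an integrating factor for the resulting pathwise linear equation. Your observation that the exponent in the second display should involve the degree $q$ of $H$ (rather than $p$) correctly identifies a typo in the statement.
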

The proof of this result follows and generalises the computations done in \cite[Section~3.2]{MR4242161}, see also \cite[Theorem~1]{MR3898821}.
\begin{proof}
If $F$ is a Casimir function for the system~\eqref{prob}, then by definition one has $\nabla F^T B\equiv0$. If $F$ is the Hamiltonian function for the system~\eqref{prob1}, one has $\nabla F^{T}B \nabla F\equiv0$.
By the product rule for Stratonovich SDEs and the definition of the considered problems, one then obtains the identity
\[
\diff F(y(t))=-\gamma(t)\nabla F(y(t))^T y(t)\diff t.
\]

The function $F$ is assumed to be an homogeneous function of degree $p$, which yields the identity
\[
\diff F(y(t))=-p\gamma(t) F(y(t))\diff t.
\]
Therefore, for all $t\in[0,\mathbf{T})$, one has almost surely
\[
F(y(t))=\exp\left(-p\int_0^t\gamma(s)\,\diff s\right)F(y_0).
\]
The proof is completed.
\end{proof}

Proposition~\ref{prop:Casimir} above can be used to show almost sure global existence and boundedness of the exact solutions to linearly damped stochastic Poisson systems~\eqref{prob} and~\eqref{prob1}, under appropriate conditions.
\begin{corollary}\label{cor:exist}
Let $F$ be an homogeneous function of degree $p$. Moreover, assume that $F$ is either a Casimir function of class $\mathcal{C}^2$ in the case of the SDE~\eqref{prob} or a Hamiltonian function of class $\mathcal{C}^2$ in the case of the one-noise SDE~\eqref{prob1}.
Assume that ${\bf m}(F)=\underset{y\in\R^d;~\|y\|=1}\min~F(y)>0$ is positive.

Then almost surely one has $\mathbf{T}=\infty$. Furthermore, for any initial value $y_0\in\R^d$ and any final time $T\in(0,\infty)$, there exists a positive real number $R(\gamma,T,y_0,F)\in(0,\infty)$ such that for all $t\in[0,T]$, one has almost surely
\[
\|y(t)\|\le \frac{F(y_0)^{\frac1p}}{{\bf m}(F)^{\frac1p}}\exp\left(\int_0^T|\gamma(s)|\,\diff s\right)\leq R(\gamma,T,y_0,F)<\infty.
\]
\end{corollary}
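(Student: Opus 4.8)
The plan is to combine the exact conformal/energy-balance identity of Proposition~\ref{prop:Casimir} with the homogeneity lower bound $F(y)\ge {\bf m}(F)\|y\|^p$, and then to convert the resulting a priori estimate into global existence via the standard blow-up alternative for SDEs. First I would record the elementary facts about $F$: since $F$ is continuous and homogeneous of degree $p$ with ${\bf m}(F)>0$, one has $F(y)\ge {\bf m}(F)\|y\|^p$ for every $y\in\R^d$ (including $y=0$, where both sides vanish), as recalled above; in particular $F(y_0)\ge {\bf m}(F)\|y_0\|^p\ge 0$, so $F(y_0)^{1/p}$ is well defined. Then I would apply Proposition~\ref{prop:Casimir}: in both cases (namely $F$ a Casimir of~\eqref{prob}, or $F$ the Hamiltonian of~\eqref{prob1}) one has, almost surely,
\[
F(y(t))=\exp\Bigl(-p\int_0^t\gamma(s)\,\diff s\Bigr)F(y_0),\qquad t\in[0,\mathbf{T}).
\]

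Next I would combine these two ingredients. Since $\gamma$ is continuous, $-p\int_0^t\gamma(s)\,\diff s\le p\int_0^t|\gamma(s)|\,\diff s\le p\int_0^T|\gamma(s)|\,\diff s<\infty$ for every $t\in[0,T]$; together with the lower bound this gives, for all $t\in[0,\mathbf{T})\cap[0,T]$,
\[
{\bf m}(F)\,\|y(t)\|^p\le F(y(t))\le \exp\Bigl(p\int_0^T|\gamma(s)|\,\diff s\Bigr)F(y_0).
\]
Since $t\mapsto t^{1/p}$ is nondecreasing on $[0,\infty)$ and both sides are nonnegative, taking $p$-th roots yields $\|y(t)\|\le \frac{F(y_0)^{1/p}}{{\bf m}(F)^{1/p}}\exp\bigl(\int_0^T|\gamma(s)|\,\diff s\bigr)=:R(\gamma,T,y_0,F)<\infty$, which is precisely the stated bound (on the part of the interval where the solution is defined).

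It remains to deduce $\mathbf{T}=\infty$ almost surely. Here I would invoke the blow-up alternative from the local well-posedness theory cited in Section~\ref{sec-sett} (the coefficients of~\eqref{prob} and~\eqref{prob1} are of class $\mathcal{C}^1$, hence locally Lipschitz): on the event $\{\mathbf{T}<\infty\}$ one necessarily has $\limsup_{t\uparrow\mathbf{T}}\|y(t)\|=+\infty$. Fixing an arbitrary deterministic $T\in(0,\infty)$, the estimate above shows that on $\{\mathbf{T}\le T\}$ the map $t\mapsto\|y(t)\|$ remains bounded by $R(\gamma,T,y_0,F)$ on $[0,\mathbf{T})$, contradicting blow-up; hence $\PP(\mathbf{T}\le T)=0$, and letting $T\to\infty$ gives $\mathbf{T}=\infty$ almost surely. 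After this, the displayed bound is valid for every $T\in(0,\infty)$ and all $t\in[0,T]$. The one genuinely delicate point — the main obstacle — is exactly this last step: Proposition~\ref{prop:Casimir} is only asserted on the random interval $[0,\mathbf{T})$, so one must argue via the blow-up dichotomy rather than concluding from the a priori bound alone; this is routine given the standard extension theorem for locally Lipschitz SDEs, but it is where care, rather than a direct computation, is required.
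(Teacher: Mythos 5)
Your proof is correct and follows essentially the same route as the paper's: combine the identity from Proposition~\ref{prop:Casimir} with the lower bound $F(y)\ge {\bf m}(F)\|y\|^p$ to get the a priori estimate on $[0,\mathbf{T}\wedge T)$, then conclude global existence. The only difference is that the paper dismisses the final step with ``it is straightforward to prove that $\mathbf{T}=\infty$ almost surely,'' whereas you correctly spell out the blow-up alternative argument that this step actually requires.
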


In the case of quadratic functionals $C(y)=\frac12 y^TDy$, the condition ${\bf m}(C)>0$ is satisfied if and only if the symmetric matrix $D$ is positive definite.
This holds for the rigid body system described Example~\ref{exprigid} and Remark~\ref{rem:CasH}.

\begin{proof}
Recall that for $F$ an homogeneous function of degree $p$, one has $F(y)=\|y\|^p F\left(\frac{y}{\|y\|}\right)\ge {\bf m}(F)\|y\|^p$. Combining the condition ${\bf m}(F)>0$ and the upper bound obtained in Proposition~\ref{prop:Casimir}, for all $t\in[0,\mathbf{T}\wedge T)$ one has almost surely
\[
\|y(t)\|^p\le \frac{F(y(t))}{{\bf m}(F)}\le \frac{F(y_0)}{{\bf m}(F)}\exp\left(p\int_0^T|\gamma(s)|\,\diff s\right)= R(\gamma,T,y_0,F).
\]
Using the upper bound above, it is straightforward to prove that $\mathbf{T}=\infty$ almost surely. This concludes the proof.
\end{proof}

\begin{remark}\label{remark:exist}
Proposition~\ref{prop:Casimir} and Corollary~\ref{cor:exist} can be applied to the damped stochastic rigid body system from Example~\ref{exprigid}, which thus admits a unique global solution. The damped stochastic Maxwell--Bloch system from Example~\ref{expmax} also admits a unique global solution: Corollary~\ref{cor:exist} cannot be applied.
However Proposition~\ref{prop:Casimir} provides almost sure upper bounds on $C(y(t))=\frac12\|y_2(t)\|^2+\frac12\|y_3(t)\|^2$ and one then observes that
the damped stochastic Maxwell--Bloch system can then be treated as an SDE with globally Lipschitz continuous coefficients.
The damped stochastic pendulum system~\eqref{pendulum} from Example~\ref{exppend} and the damped stochastic three-dimensional Poisson system~\eqref{caointro} from Example~\ref{expcao} have globally Lipschitz coefficients and thus a unique global solution.
\end{remark}

\section{Stochastic conformal exponential integrator}\label{sec-scheme}

In this section, we present a numerical method for approximating solutions to the linearly damped stochastic Poisson systems~\eqref{prob}~and~\eqref{prob1}.
We show that the proposed stochastic exponential integrator is conformal quadratic Casimir for the system~\eqref{prob}.
Furthermore, the numerical integrator satisfies an energy balance for the SDE systems~\eqref{prob1} when the Hamiltonian is homogeneous.
Finally, we prove that the proposed integrator converges strongly with order $1/2$ for the system~\eqref{prob} and with order $1$ for the system~\eqref{prob1},
and weakly with order $1$ for the systems~\eqref{prob} and~\eqref{prob1}, under appropriate conditions.

\subsection{Description of the integrator}

Let $T\in(0,\infty)$ denote an arbitrary final time and $N$ be a positive integer. Define the time-step size $\dt=T/N$. 
Set $t_n=n\dt$ for $n=0,1,\ldots, N$ and $t_{n+\frac12}=t_n+\frac{\dt}{2}=\frac{t_n+t_{n+1}}{2}$ for all $n\in\{0,\ldots,N-1\}$. Define the Wiener increments $\Delta W_{m,n}=W_m(t_{n+1})-W_m(t_n)$
for $n\in\{0,1,\ldots, N-1\}$ and $m\in\{1,\ldots,M\}$. The numerical scheme is implicit and it is thus required to consider truncated Wiener increments $\widehat{\Delta W_{m,n}}$, defined as follows: given an integer $k\in\N$, introduce the threshold $A_{\dt,k}=\sqrt{2k|\log(\dt)|}$, the auxiliary function
\[
\chi_{\dt,k}(\zeta)=
\begin{cases}
\frac{\zeta}{|\zeta|}\min\bigl(|\zeta|,A_{\dt,k}\bigr),\quad \zeta\neq 0,\\
0,\quad \zeta=0,
\end{cases}
\]
and set
\begin{equation}\label{eq:What}
\widehat{\Delta W_{m,n}}=\sqrt{\dt}\chi_{\dt,k}\left(\frac{\Delta W_{m,n}}{\sqrt{\dt}}\right).
\end{equation}
We need below the following properties of the truncated Wiener increments. First, by construction one has the almost sure upper bound $|\widehat{\Delta W_{m,n}}|\le |\Delta W_{m,n}|$, therefore the truncated Wiener increments inherit moment bounds properties from the standard Wiener increments $\Delta W_{m,n}$. Moreover, one has the almost sure upper bound $|\widehat{\Delta W_{m,n}}|\le \sqrt{\dt}A_{\dt,k}$.
Finally, for all $p\in[1,\infty)$, there exists a $C_{k,p}\in(0,\infty)$ such that one has 
\begin{equation}\label{wtrunc}
\bigl(\E[|\Delta W_{m,n}-\widehat{\Delta W_{m,n}}|^{p}]\bigr)^{\frac{1}{p}}\le C_{k,p}\dt^{\frac{1+k}{2}},
\end{equation}
and such that one has
\begin{equation}\label{wtrunc2}
0\le \E[\Delta W_{m,n}^2]-\E[\widehat{\Delta W_{m,n}}^2]\le (1+A_{\dt,k})\dt^{k+1}.
\end{equation}
See for instance~\cite[Section~2.1]{MR1951908} for further details on truncated Wiener increments.

Let us recall that the discrete gradient $\overline{\nabla}H:\R^d\times\R^d\to\R^d$ associated with an Hamiltonian function $H:\R^d\to\R$ is defined by
\begin{equation}\label{discretegradient}
\displaystyle\overline{\nabla}H(z_0,z_1)=\int_0^1\nabla H\bigl((1-\eta)z_0+\eta z_1\bigr)\,\text d\eta,\quad \forall~z_0,z_1\in\R^d,
\end{equation}
see for instance~\cite{MR1411343,MR2451073} and references therein.

Finally, for all $n\in\{0,\ldots,N-1\}$, set
\begin{equation}\label{defX01}
X_n^0=\int_{t_{n+1/2}}^{t_{n}}\gamma(s)\diff s\quad\text{and}\quad X_n^1=\int_{t_{n+1/2}}^{t_{n+1}}\gamma(s)\diff s.
\end{equation}

The proposed numerical scheme is inspired by the numerical methods for ODE studied in \cite{MR4242161} and by the energy-preserving schemes
for stochastic Poisson systems studied in \cite{MR3210739}. The stochastic conformal exponential integrator for the linearly damped stochastic Poisson system~\eqref{prob}
is defined by
\begin{equation}\label{confexp}
\begin{aligned}
{e^{X_{n}^1}y_{n+1}-e^{X_n^0}y_n}&=B\left(\frac{e^{X_n^0}y_n+e^{X_{n}^1}y_{n+1}}{2}\right)\overline{\nabla}H_0(\e^{X_n^0}y_n,\e^{X_{n}^1}y_{n+1})\dt \\
&+B\left(\frac{e^{X_n^0}y_n+e^{X_{n}^1}y_{n+1}}{2}\right)
\sum_{m=1}^M\overline{\nabla}H_m(\e^{X_n^0}y_n,\e^{X_{n}^1}y_{n+1})\widehat{\Delta W_{m,n}},
\end{aligned}
\end{equation}
for all $n\in\{0,\ldots,N-1\}$, where $y_0\in\R^d$ is an arbitrary given initial value.

When considering the linearly damped stochastic Poisson system~\eqref{prob1}, the proposed integrator is given by
\begin{equation}\label{confexp1}
{e^{X_{n}^1}y_{n+1}-e^{X_n^0}y_n}=B\left(\frac{e^{X_n^0}y_n+e^{X_{n}^1}y_{n+1}}{2}\right)\left\{ \overline{\nabla}H(\e^{X_n^0}y_n,\e^{X_{n}^1}y_{n+1})
\bigl(\dt+c\widehat{\Delta W_{m,n}}\bigr)\right\}.
\end{equation}



Observe that the numerical scheme~\eqref{confexp} has the following equivalent formulation, using auxiliary variables $\widehat{z_n},z_{n+1}\in\R^d$:
\begin{equation}\label{split}
\left\lbrace
\begin{aligned}
\widehat{z_n}&=\exp\bigl(-\int_{t_n}^{t_{n+\frac12}}\gamma(s)\,\text ds\bigr)y_n,\\
z_{n+1}&=\widehat{z_n}+B\left(\frac{\widehat{z_n}+z_{n+1}}{2}\right)\left( \overline{\nabla}H_0(\widehat{z_n},z_{n+1})\dt+
\sum_{m=1}^M\overline{\nabla}H_m(\widehat{z_n},z_{n+1})\widehat{\Delta W_{m,n}}\right),\\
y_{n+1}&=\exp\bigl(-\int_{t_{n+\frac12}}^{t_{n+1}}\gamma(s)\,\text ds\bigr)z_{n+1}.
\end{aligned}
\right.
\end{equation}
Therefore the stochastic conformal exponential integrator~\eqref{confexp} can be interpreted as a splitting integrator, where a Strang splitting strategy is applied, for the subsystem
\[
\diff y(t)=-\gamma(t)y(t)\diff t
\]
which is solved exactly on the intervals $[t_n,t_{n+\frac12}]$ and $[t_{n+\frac12},t_{n+1}]$, and the subsystem
\[
\diff z(t)=B(z(t))\nabla H_0(z(t))\diff t+\sum_{m=1}^M B(z(t))\nabla H_m(z(t))\circ\,\diff W_m(t)
\]
which is solved approximately on the interval $[t_n,t_{n+1}]$ with $z(t_n)=\widehat{z_n}$ by an extension of the energy-preserving scheme from \cite{MR3210739}.

The numerical integrator~\eqref{confexp} and the equivalent formulation~\eqref{split} are implicit schemes, it is thus necessary to justify that it admits a unique solution, assuming that the time-step size $\dt$ is sufficiently small. This is performed in two steps: first when the SDE~\eqref{prob} has globally Lipschitz drift and diffusion coefficients, second after studying the qualitative properties of the numerical solutions to obtain almost sure upper bounds.

\begin{lemma}\label{lem:scheme}
Assume that either
\begin{itemize}
\item the structure matrix $B$ is constant and for all $m\in\{0,1,\ldots,M\}$ the Hamiltonian function $H_m$ is of class $\mathcal{C}^2$ with bounded second order derivatives;
\item[] \hspace{-0.5cm}or
\item the structure matrix $B$ is of class $\mathcal{C}^1$, is bounded and has bounded first order derivative and for all $m\in\{0,1,\ldots,M\}$ the Hamiltonian function $H_m$ is
of class $\mathcal{C}^2$ with bounded first and second order derivatives.
\end{itemize}
Recall that $k$ denotes the integer used to define truncated Wiener increments.
There exists $\overline{\dt}_k>0$ such that if $\dt\in(0,\overline{\dt}_k)$ then the numerical scheme~\eqref{confexp} admits a unique solution.
\end{lemma}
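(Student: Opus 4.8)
The plan is to set up a fixed-point argument for the implicit relation defining $z_{n+1}$ in the equivalent formulation~\eqref{split}, since the exponential rescalings in~\eqref{confexp} are explicit and invertible once $z_{n+1}$ is known. Concretely, given $\widehat{z_n}$ and the truncated increments $\widehat{\Delta W_{m,n}}$, define the map $\Phi\colon\R^d\to\R^d$ by
\[
\Phi(z)=\widehat{z_n}+B\!\left(\frac{\widehat{z_n}+z}{2}\right)\left(\overline{\nabla}H_0(\widehat{z_n},z)\dt+\sum_{m=1}^M\overline{\nabla}H_m(\widehat{z_n},z)\widehat{\Delta W_{m,n}}\right),
\]
so that $z_{n+1}$ is exactly a fixed point of $\Phi$. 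I would show that $\Phi$ is a contraction on $\R^d$ (equipped with the Euclidean norm) provided $\dt$ is small enough, and then invoke the Banach fixed-point theorem to get existence and uniqueness of $z_{n+1}$, hence of $y_{n+1}=\exp\!\bigl(-\int_{t_{n+1/2}}^{t_{n+1}}\gamma(s)\,\diff s\bigr)z_{n+1}$.

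The key step is the Lipschitz estimate on $\Phi$. One differentiates (or takes differences of) the two ingredients: the discrete gradient $\overline{\nabla}H_m(\widehat{z_n},\cdot)$ and the evaluation $B\bigl(\tfrac{\widehat{z_n}+\cdot}{2}\bigr)$. From the definition~\eqref{discretegradient} and the mean value theorem, $z\mapsto\overline{\nabla}H_m(\widehat{z_n},z)$ is Lipschitz with constant $\tfrac12\sup\|\nabla^2 H_m\|$, which is finite by the hypotheses. For the first bullet case $B$ is constant, so $\Phi(z)-\Phi(z')=B\sum_m\bigl(\overline{\nabla}H_m(\widehat{z_n},z)-\overline{\nabla}H_m(\widehat{z_n},z')\bigr)\cdot(\text{weight}_m)$, and one gets
\[
\|\Phi(z)-\Phi(z')\|\le \|B\|\Bigl(\tfrac12\sup\|\nabla^2 H_0\|\,\dt+\tfrac12\sum_{m=1}^M\sup\|\nabla^2 H_m\|\,|\widehat{\Delta W_{m,n}}|\Bigr)\|z-z'\|.
\]
Using the almost sure bound $|\widehat{\Delta W_{m,n}}|\le\sqrt{\dt}A_{\dt,k}=\sqrt{2k\dt|\log\dt|}$, the bracket is bounded by $C(\dt+\sqrt{\dt|\log\dt|})\to 0$ as $\dt\to 0$, so there is $\overline{\dt}_k>0$ making the constant $\le 1/2$. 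In the second bullet case one also has to control the variation of $B\bigl(\tfrac{\widehat{z_n}+\cdot}{2}\bigr)$: write the difference as a sum of two terms by adding and subtracting $B\bigl(\tfrac{\widehat{z_n}+z'}{2}\bigr)\bigl(\overline{\nabla}H_m(\widehat{z_n},z)\cdot\text{weight}_m\bigr)$, bounding the $B$-variation term by $\tfrac12\sup\|\nabla B\|\cdot\sup\|\nabla H_m\|\cdot(\text{weights})$ (here the boundedness of $\nabla H_m$ is used, via $\|\overline{\nabla}H_m\|\le\sup\|\nabla H_m\|$) and the $\overline{\nabla}H$-variation term by $\sup\|B\|\cdot\tfrac12\sup\|\nabla^2 H_m\|\cdot(\text{weights})$; again everything carries a factor $\dt$ or $|\widehat{\Delta W_{m,n}}|$, both of which vanish as $\dt\to 0$.

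The main obstacle — or rather the point requiring care — is that $\overline{\dt}_k$ must be chosen deterministically and uniformly over $\omega$, which is why the almost sure bound $|\widehat{\Delta W_{m,n}}|\le\sqrt{\dt}A_{\dt,k}$ (rather than merely a moment bound) is essential: it is precisely this truncation that makes the noise contribution to the Lipschitz constant deterministically small, so a single threshold $\overline{\dt}_k$ works for every $n$ and every realization. I would note that $\overline{\dt}_k$ depends on $k$ through $A_{\dt,k}$, consistent with the statement. One should also observe that the same argument applies verbatim to scheme~\eqref{confexp1} (it is the $M=1$, $H_0=H$, $H_1=cH$ specialization), though the lemma as stated is phrased for~\eqref{confexp}. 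Finally, I would remark that the Lipschitz constant of $\Phi$ being $\le 1/2$ also yields a local a priori bound $\|z_{n+1}-\widehat{z_n}\|\le 2\|\Phi(\widehat{z_n})-\widehat{z_n}\|$, which will be convenient later, but this is not needed for the bare existence/uniqueness claim.
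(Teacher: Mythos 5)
Your proposal is correct and follows essentially the same route as the paper: both reduce to the fixed-point equation for $z_{n+1}$ in the formulation~\eqref{split}, establish a contraction estimate for the auxiliary map with Lipschitz constant of order $\dt+\sqrt{\dt|\log\dt|}$ using the bounds on $B$, $B'$, $\nabla H_m$, $\nabla^2 H_m$ together with the almost sure bound $|\widehat{\Delta W_{m,n}}|\le\sqrt{2k\dt|\log\dt|}$, and conclude by the Banach fixed-point theorem plus recursion over $n$. Your emphasis on the truncation being what makes the threshold $\overline{\dt}_k$ deterministic and uniform in $\omega$ and $n$ is exactly the point of the construction.
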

The well-posedness of the numerical scheme~\eqref{confexp1} follows from Lemma~\ref{lem:scheme} with a straightforward modification of the assumptions.
\begin{proof}
Let us consider the equivalent formulation~\eqref{split} of the numerical scheme~\eqref{confexp}. It suffices to show that for $n\in\{0,\ldots,N-1\}$, given $\widehat{z}_n\in\R^d$ and truncated Wiener increments $\widehat{\Delta W_{m,n}}$ for $m\in\{1,\ldots,M\}$, there exists a unique solution $z=z_{n+1}$ to the fixed point equation
\begin{equation}\label{fixpoint}
z=\widehat{z_n}+B\left(\frac{\widehat{z_n}+z}{2}\right)\left( \overline{\nabla}H_0(\widehat{z_n},z)\dt+
\sum_{m=1}^M\overline{\nabla}H_m(\widehat{z_n},z)\widehat{\Delta W_{m,n}}\right).
\end{equation}
A sufficient condition is to prove that the auxiliary mapping
\[
z\mapsto\psi(z;\widehat{z}_n,\tau,\widehat{\Delta W_{1,n}},\ldots,\widehat{\Delta W_{M,n}})=\widehat{z_n}+B\left(\frac{\widehat{z_n}+z}{2}\right)\left( \overline{\nabla}H_0(\widehat{z_n},z)\dt+
\sum_{m=1}^M\overline{\nabla}H_m(\widehat{z_n},z)\widehat{\Delta W_{m,n}}\right)
\]
is a contraction if $\tau\in(0,\overline{\tau}_k)$, i.\,e. that there exists $C_{k,\tau}\in(0,\infty)$ such that
\[
\big|\psi(z;\widehat{z}_n,\tau,\widehat{\Delta W_{1,n}},\ldots,\widehat{\Delta W_{M,n}})-\psi(z';\widehat{z}_n,\tau,\widehat{\Delta W_{1,n}},\ldots,\widehat{\Delta W_{M,n}})\big|\le C_{k,\tau}|z-z'|,\quad\forall~z,z'\in\R^d.
\]
This follows from the following elementary computations: for all $z,z'\in\R^d$ one has
\begin{align*}
\big|\psi(z;\widehat{z}_n,\tau,\widehat{\Delta W_{1,n}},\ldots,\widehat{\Delta W_{M,n}})&-\psi(z';\widehat{z}_n,\tau,\widehat{\Delta W_{1,n}},\ldots,\widehat{\Delta W_{M,n}})\big|\\
&\le \dt\big|\bigl[B\left(\frac{\widehat{z_n}+z}{2}\right)-B\left(\frac{\widehat{z_n}+z'}{2}\right)\bigr]\overline{\nabla}H_0(\widehat{z_n},z)\big|\\
&~+ \sum_{m=1}^M\big|\bigl[B\left(\frac{\widehat{z_n}+z}{2}\right)-B\left(\frac{\widehat{z_n}+z'}{2}\right)\bigr]\overline{\nabla}H_m(\widehat{z_n},z)\widehat{\Delta W_{m,n}}\big|\\
&~+\dt\big|B\left(\frac{\widehat{z_n}+z'}{2}\right)\bigl[\overline{\nabla}H_0(\widehat{z_n},z)-\overline{\nabla}H_0(\widehat{z_n},z')\big|\\
&~+\sum_{m=1}^M\big|B\left(\frac{\widehat{z_n}+z'}{2}\right)\bigl[\overline{\nabla}H_m(\widehat{z_n},z)-\overline{\nabla}H_m(\widehat{z_n},z')\bigr]\widehat{\Delta W_{m,n}}\big|\\
&\le C_{k,\tau}|z-z'|
\end{align*}
with $C_{k,\tau}$ given by
\begin{align*}
C_{k,\tau}&=\tau\Bigl(\frac12\|B'\|_{\infty} \|\nabla H_0\|_{\infty}+\|B\|_{\infty}\|\nabla^2 H_0\|_{\infty}\Bigr)\\
&~+\sqrt{2k\tau|\log(\tau)|}\sum_{m=1}^{M}\Bigl(\frac12\|B'\|_{\infty} \|\nabla H_m\|_{\infty}+\|B\|_{\infty}\|\nabla^2 H_m\|_{\infty}\Bigr).
\end{align*}
Observe that $C_{k,\tau}\to 0$ when $\tau\to 0$, then choosing $\overline{\tau}_k$ such that $C_{k,\tau}<1$ for all $\tau\in(0,\overline{\tau}_k)$ concludes the proof of the contraction property of the auxiliary mapping.

Applying the fixed point theorem and a recursion argument then shows the well-posedness of the implicit numerical scheme~\eqref{confexp} if
the time-step size is sufficiently small $\tau\in(0,\overline{\tau}_k)$. The proof is thus completed.
\end{proof}

Note that Lemma~\ref{lem:scheme} can be applied to the damped stochastic pendulum system~\eqref{pendulum} described in Example~\ref{exppend} and for the damped stochastic three-dimensional Poisson system described in Example~\ref{expcao}.


%

\subsection{Qualitative properties}

In this section we study the qualitative behavior of the numerical solution defined by~\eqref{confexp}. First, we show that the proposed numerical scheme
satisfies the conformal Casimir property of the exact solution of~\eqref{prob} given in Proposition~\ref{prop:Casimir}, when quadratic Casimir functions are considered. Second, we show that for the system~\eqref{prob1}, then one obtains the same energy balance as in Proposition~\ref{prop:Casimir}, when the Hamiltonian function is homogeneous of degree $p$.

To establish the qualitative results above, it is not necessary to assume that the numerical schemes~\eqref{confexp} and~\eqref{confexp1} have unique solutions.
In fact, like for the exact solutions, under appropriate conditions, such qualitative properties can be employed to show that there exists
a unique solution to the numerical scheme (see Corollary~\ref{cor:num} below).

\begin{remark}
Note that, in general, a Casimir function can be an arbitrary function.
However, already in the undamped and deterministic setting,
it is know that only linear and quadratic invariants can be preserved automatically by a numerical scheme,
see for instance \cite{MR2840298}. Hence, our focus is on numerical discretisations of the SDE~\eqref{prob}
preserving the property of conformal quadratic Casimirs. For non-quadratic Casimir functions,
one should exploit the structure of a specific problem to derive conformal numerical schemes.
\end{remark}


For the proofs below, the formulation~\eqref{split} of the numerical scheme~\eqref{confexp} is used.

\begin{proposition}\label{prop:confCasimir}
Consider the linearly damped stochastic Poisson system~\eqref{prob}. Assume that $C$ is a quadratic conformal Casimir function.

The stochastic exponential integrator~\eqref{confexp} is conformal quadratic Casimir: for any value of the time-step size $\tau=T/N$, if $\bigl(y_n\bigr)_{0\le n\le N}$ is a numerical solution given by~\eqref{confexp}, for all $n\in\{0,\ldots,N-1\}$, almost surely one has
almost surely
\[
C(y_{n+1})=\exp\left( -2\int_{t_n}^{t_{n+1}}\gamma(s)\diff s  \right) C(y_n).
\]
\end{proposition}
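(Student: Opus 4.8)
The plan is to work with the splitting formulation~\eqref{split} and exploit that a quadratic Casimir $C(y) = \tfrac12 y^T D y$ with symmetric $D$ satisfies $\nabla C(y) = Dy$, so that the discrete gradient is linear: $\overline{\nabla}C(z_0,z_1) = D\bigl(\tfrac{z_0+z_1}{2}\bigr) = \nabla C\bigl(\tfrac{z_0+z_1}{2}\bigr)$. The conformal Casimir condition means $\nabla C(y)^T B(y) = (Dy)^T B(y) = 0$ for all $y$, equivalently $B(y)^T D y = 0$ since $B$ is skew-symmetric and $D$ symmetric. First I would treat the middle (conservative) step of~\eqref{split}: I claim $C(z_{n+1}) = C(\widehat{z_n})$. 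To see this, write $z_{n+1} - \widehat{z_n} = B(\bar z)\, v$, where $\bar z = \tfrac{\widehat{z_n}+z_{n+1}}{2}$ and $v = \overline{\nabla}H_0(\widehat{z_n},z_{n+1})\tau + \sum_m \overline{\nabla}H_m(\widehat{z_n},z_{n+1})\widehat{\Delta W_{m,n}}$. Then
\begin{align*}
C(z_{n+1}) - C(\widehat{z_n}) &= \tfrac12 z_{n+1}^T D z_{n+1} - \tfrac12 \widehat{z_n}^T D \widehat{z_n}
= (z_{n+1}-\widehat{z_n})^T D\, \bar z \\
&= \bigl(B(\bar z) v\bigr)^T D\bar z = v^T B(\bar z)^T D\bar z = 0,
\end{align*}
using the algebraic identity $b^T D a - \tfrac12 a^T D a = $ wait — more cleanly, $\tfrac12 z_1^T D z_1 - \tfrac12 z_0^T D z_0 = (z_1-z_0)^T D\tfrac{z_0+z_1}{2}$ for symmetric $D$, and then the conformal Casimir condition $B(\bar z)^T D \bar z = 0$ evaluated at the midpoint $\bar z$ kills the term.

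Next I would handle the two exponential (damping) half-steps. From the first line of~\eqref{split}, $\widehat{z_n} = \exp\bigl(-\int_{t_n}^{t_{n+1/2}}\gamma(s)\diff s\bigr) y_n =: e^{-\alpha_n} y_n$, so by the quadratic scaling of $C$,
\[
C(\widehat{z_n}) = e^{-2\alpha_n} C(y_n), \qquad \alpha_n = \int_{t_n}^{t_{n+1/2}}\gamma(s)\diff s.
\]
Similarly, from the last line, $y_{n+1} = e^{-\beta_n} z_{n+1}$ with $\beta_n = \int_{t_{n+1/2}}^{t_{n+1}}\gamma(s)\diff s$, giving $C(y_{n+1}) = e^{-2\beta_n} C(z_{n+1})$. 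Chaining the three relations yields $C(y_{n+1}) = e^{-2\beta_n} e^{-2\alpha_n} C(y_n) = \exp\bigl(-2\int_{t_n}^{t_{n+1}}\gamma(s)\diff s\bigr) C(y_n)$, which is the claimed identity.

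The main (and really the only) obstacle is the middle step: one must make sure that the discrete gradient structure is used correctly, namely that because $C$ is quadratic its discrete gradient coincides with its ordinary gradient evaluated at the midpoint, and that the skew-symmetry of $B$ together with the conformal Casimir identity $\nabla C(y)^T B(y) \equiv 0$ is invoked precisely at the midpoint argument $\bar z = \tfrac{\widehat{z_n}+z_{n+1}}{2}$ that appears in the scheme — this midpoint evaluation of $B$ is exactly what makes the telescoping work. Everything else is the elementary identity $\tfrac12 z_1^T D z_1 - \tfrac12 z_0^T D z_0 = (z_1 - z_0)^T D \tfrac{z_0+z_1}{2}$ and the homogeneity-of-degree-two scaling of $C$ under the explicit exponential factors; no regularity or smallness of $\tau$ is needed, consistent with the statement holding for any $\tau = T/N$ and for any numerical solution (even if non-unique). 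I would present the argument in the order: (i) reduce to the three lines of~\eqref{split}; (ii) quadratic scaling for the two damping half-steps; (iii) invariance $C(z_{n+1}) = C(\widehat{z_n})$ for the conservative step via the midpoint identity and the conformal Casimir condition; (iv) combine.
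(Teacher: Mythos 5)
Your proposal is correct and follows essentially the same route as the paper's proof: the splitting formulation~\eqref{split}, the midpoint identity $C(z_{n+1})-C(\widehat{z_n})=\nabla C\bigl(\tfrac{\widehat{z_n}+z_{n+1}}{2}\bigr)^T(z_{n+1}-\widehat{z_n})$ for quadratic $C$, the Casimir condition evaluated at the midpoint to kill that term, and degree-two homogeneity for the two exponential half-steps. The only difference is cosmetic: you write everything explicitly in terms of $D$, whereas the paper phrases it via $\nabla C$ and homogeneity.
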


\begin{proof}
Consider the equivalent formulation~\eqref{split} of the scheme~\eqref{confexp}. 
Since $C$ is a quadratic mapping, one has
\[
C(z_{n+1})-C(\widehat{z_n})=\nabla C\left(\frac{z_{n+1}+\widehat{z_n}}2\right)^T(z_{n+1}-\widehat{z_n}).
\]
Using the definition of a Casimir functional, one has the identity
\[
\nabla C\left(\frac{z_{n+1}+\widehat{z_n}}2\right)^TB\left(\frac{z_{n+1}+\widehat{z_n}}2\right)=0,
\]
therefore applying the formulation~\eqref{split} of the numerical scheme, for all $n\in\{0,\ldots,N-1\}$ one thus obtains
\[
C(z_{n+1})-C(\widehat{z_n})=0.
\]

In addition, $C$ is a quadratic function, therefore it is an homogeneous function of degree $2$, and by~\eqref{split}, for all $n\in\{0,\ldots,N-1\}$ one has almost surely
\begin{align*}
C(y_{n+1})&=\exp\bigl(-2\int_{t_{n+\frac12}}^{t_{n+1}}\gamma(s)\text ds\bigr)C(z_{n+1})\\
&=\exp\bigl(-2\int_{t_{n+\frac12}}^{t_{n+1}}\gamma(s)\text ds\bigr)C(\widehat{z_n})\\
&=\exp\bigl(-2\int_{t_{n+\frac12}}^{t_{n+1}}\gamma(s)\text ds\bigr)\exp\bigl(-2\int_{t_n}^{t_{n+\frac12}}\gamma(s)\text ds\bigr)C(y_{n})\\
&=\exp\bigl( -2\int_{t_n}^{t_{n+1}}\gamma(s)\diff s  \bigr) C(y_n).
\end{align*}
The proof is thus completed.

%
\end{proof}

\begin{proposition}\label{prop:balanceNum}
Consider the linearly damped stochastic Poisson system~\eqref{prob1}. Assume that the Hamiltonian function $H$ is homogeneous of degree $p$.

The stochastic conformal exponential integrator~\eqref{confexp1} satisfies an almost sure energy balance:
for any value of the time-step size $\tau=T/N$, if $\bigl(y_n\bigr)_{0\le n\le N}$ is a numerical solution given by~\eqref{confexp1},
for all $n\in\{0,\ldots,N-1\}$ one has almost surely
\[
H(y_{n+1})=\exp\left( -p\int_{t_n}^{t_{n+1}}\gamma(s)\diff s  \right)H(y_n).
\]
\end{proposition}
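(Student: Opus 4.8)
The plan is to mimic the structure of the proof of Proposition~\ref{prop:confCasimir}, replacing the quadratic-Casimir algebra by the homogeneity-of-$H$ algebra, and using the three-step splitting formulation~\eqref{split} adapted to the one-noise scheme~\eqref{confexp1}. First I would write down the analogue of~\eqref{split} for~\eqref{confexp1}: set $\widehat{z_n}=\exp(-\int_{t_n}^{t_{n+1/2}}\gamma(s)\diff s)y_n$, let $z_{n+1}$ solve the implicit midpoint/discrete-gradient relation
\[
z_{n+1}=\widehat{z_n}+B\Bigl(\tfrac{\widehat{z_n}+z_{n+1}}{2}\Bigr)\overline{\nabla}H(\widehat{z_n},z_{n+1})\bigl(\dt+c\,\widehat{\Delta W_{1,n}}\bigr),
\]
and put $y_{n+1}=\exp(-\int_{t_{n+1/2}}^{t_{n+1}}\gamma(s)\diff s)z_{n+1}$.

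The key middle step is to show that the discrete-gradient substep preserves $H$ exactly, i.e. $H(z_{n+1})=H(\widehat{z_n})$. This uses the defining property of the discrete gradient~\eqref{discretegradient}, namely $H(z_1)-H(z_0)=\overline{\nabla}H(z_0,z_1)^T(z_1-z_0)$ for all $z_0,z_1$. Applying this with $z_0=\widehat{z_n}$, $z_1=z_{n+1}$ and then substituting the increment $z_{n+1}-\widehat{z_n}=B(\cdot)\overline{\nabla}H(\widehat{z_n},z_{n+1})(\dt+c\widehat{\Delta W_{1,n}})$ gives
\[
H(z_{n+1})-H(\widehat{z_n})=\bigl(\dt+c\widehat{\Delta W_{1,n}}\bigr)\,\overline{\nabla}H(\widehat{z_n},z_{n+1})^T B\Bigl(\tfrac{\widehat{z_n}+z_{n+1}}{2}\Bigr)\overline{\nabla}H(\widehat{z_n},z_{n+1}),
\]
and the right-hand side vanishes because $B$ evaluated at any point is skew-symmetric, so $v^TB(\cdot)v=0$ for every vector $v$. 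Note that, unlike in Proposition~\ref{prop:confCasimir}, here I do not invoke any Casimir identity $\nabla F^TB=0$; I only use skew-symmetry of $B$ together with the discrete-gradient identity, exactly as in the energy-preserving scheme of~\cite{MR3210739}.

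Finally I would assemble the two exponential substeps: since $H$ is homogeneous of degree $p$, it scales as $H(\lambda v)=\lambda^p H(v)$, so
\[
H(y_{n+1})=\exp\Bigl(-p\!\int_{t_{n+1/2}}^{t_{n+1}}\!\gamma(s)\diff s\Bigr)H(z_{n+1})=\exp\Bigl(-p\!\int_{t_{n+1/2}}^{t_{n+1}}\!\gamma(s)\diff s\Bigr)H(\widehat{z_n}),
\]
and then $H(\widehat{z_n})=\exp(-p\int_{t_n}^{t_{n+1/2}}\gamma(s)\diff s)H(y_n)$, so that combining the two factors and using additivity of the integral yields $H(y_{n+1})=\exp(-p\int_{t_n}^{t_{n+1}}\gamma(s)\diff s)H(y_n)$, which is the claim. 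There is no genuine obstacle here; the only point that requires a word of care is that the identity is purely algebraic and holds pathwise for any fixed realization of $\widehat{\Delta W_{1,n}}$, so it holds almost surely regardless of whether the scheme is well-posed — exactly as remarked before Proposition~\ref{prop:confCasimir} — and no integrability or smallness condition on $\dt$ is needed for this statement.
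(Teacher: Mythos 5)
Your proposal is correct and follows essentially the same route as the paper's own proof: rewrite the scheme in the split form, use the discrete-gradient identity $H(z_{n+1})-H(\widehat{z_n})=\overline{\nabla}H(\widehat{z_n},z_{n+1})^T(z_{n+1}-\widehat{z_n})$ together with skew-symmetry of $B$ to get exact conservation in the middle substep, and then use homogeneity of degree $p$ for the two exponential substeps. Your remark that only skew-symmetry (not a Casimir identity) is needed, and that the argument is pathwise and independent of well-posedness of the implicit step, matches the paper's reasoning.
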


\begin{proof}
Consider the equivalent formulation~\eqref{split} of the scheme~\eqref{confexp}.

Using the definition~\eqref{discretegradient} of the discrete gradient $\overline{\nabla}H$, one has
\[
H(z_{n+1})-H(\widehat{z_n})=\overline{\nabla}H(\widehat{z_n},z_{n+1})^T (z_{n+1}-z_n).
\]
Since the structure matrix $B(z)$ is skew-symmetric for all $z\in\R^d$, one has the identity
\[
\overline{\nabla}H(\widehat{z_n},z_{n+1})^T B\left(\frac{z_{n+1}+\widehat{z_n}}2\right)  \overline{\nabla}H(\widehat{z_n},z_{n+1})=0,
\]
therefore applying the formulation~\eqref{split} of the numerical scheme, for all $n\in\{0,\ldots,N-1\}$ one thus obtains
\[
H(z_{n+1})-H(\widehat{z_n})=0.
\]
%
Furthermore, assuming that the Hamiltonian function $H$ is an homogeneous function of degree $p$, by~\eqref{split}, for all $n\in\{0,\ldots,N-1\}$ one has almost surely
\begin{align*}
H(y_{n+1})&=\exp\bigl(-p\int_{t_{n+\frac12}}^{t_{n+1}}\gamma(s)\diff s\bigr)H(z_{n+1})
=\exp\bigl(-p\int_{t_{n+\frac12}}^{t_{n+1}}\gamma(s)\diff s\bigr)H(\widehat{z_{n}})\\
&=\exp\bigl(-p\int_{t_{n+\frac12}}^{t_{n+1}}\gamma(s)\diff s\bigr)\exp\bigl(-p\int_{t_n}^{t_{n+\frac12}}\gamma(s)\diff s\bigr)H(y_n)\\
&=\exp\bigl( -p\int_{t_n}^{t_{n+1}}\gamma(s)\diff s \bigr)H(y_n).
\end{align*}


The proof is thus completed.
\end{proof}

Under the same conditions as in Corollary~\ref{cor:exist} (existence of a unique global solution to the SDEs), the results from Propositions~\ref{prop:confCasimir} and~\ref{prop:balanceNum} provide almost sure upper bounds for numerical solutions given by~\eqref{confexp} and~\eqref{confexp1}. Moreover, such upper bounds can be applied to justify
that the numerical integrators admit unique solutions for sufficiently small time-step size.

\begin{corollary}\label{cor:num}
Consider the linearly damped stochastic Poisson system~\eqref{prob}, and assume that it admits a quadratic Casimir function $C(y)=\frac12 y^TDy$ with a symmetric positive definite matrix $D$.
Recall that $k$ denotes the integer used to define truncated Wiener increments.

There exists $\overline{\tau}_k(\gamma,T,y_0)>0$ such that if the time-step size satisfies $\tau\in(0,\overline{\tau}_k(\gamma,T,y_0))$, then there exists a unique solution $\bigl(y_n\bigr)_{0\le n\le N}$ to the numerical scheme~\eqref{confexp}.

Moreover, for all $n\in\{0,\ldots,N-1\}$ one has almost surely
\[
\|y_n\|\le \frac{\sqrt{C(y_0)}}{\sqrt{{\bf m}(C)}}\exp\bigl(\int_{0}^{T}|\gamma(s)|\,\diff s\bigr),
\]
where ${\bf m}(C)=\underset{y\in\R^d;~\|y\|=1}\min~C(y)>0$.

Consider the linarly damped stochastic Poisson system~\eqref{prob1}, and assume that the Hamiltonian $H$ is an homogeneous function of degree $p$ which satisfies
${\bf m}(H)=\underset{y\in\R^d;~\|y\|=1}\min~H(y)>0$.

There exists $\overline{\tau}_k(\gamma,T,y_0)>0$ such that if the time-step size satisfies $\tau\in(0,\overline{\tau}_k(\gamma,T,y_0))$, then there exists a unique solution $\bigl(y_n\bigr)_{0\le n\le N}$ to the numerical scheme~\eqref{confexp1}.

Moreover, for all $n\in\{0,\ldots,N-1\}$ one has almost surely
\[
\|y_n\|\le \frac{H(y_0)^{\frac{1}{p}}}{{\bf m}(H)^{\frac{1}{p}}}\exp\bigl(\int_{0}^{T}|\gamma(s)|\,\diff s\bigr).
\]
\end{corollary}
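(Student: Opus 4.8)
The plan is to mirror the proof of Corollary~\ref{cor:exist}: first extract an a priori bound from the discrete conformal identity, then use it to bootstrap well-posedness of the implicit scheme. I would give the argument for~\eqref{prob} and indicate the identical modifications for~\eqref{prob1}, with Proposition~\ref{prop:balanceNum} and the homogeneity of $H$ playing the roles of Proposition~\ref{prop:confCasimir} and the homogeneity of the quadratic Casimir $C$. For the bound: if $\bigl(y_n\bigr)_{0\le n\le N}$ is \emph{any} solution of~\eqref{confexp}, iterating the identity of Proposition~\ref{prop:confCasimir} gives $C(y_n)=\exp\bigl(-2\int_0^{t_n}\gamma(s)\diff s\bigr)C(y_0)\le \exp\bigl(2\int_0^T|\gamma(s)|\diff s\bigr)C(y_0)$, and homogeneity of degree $2$ gives $C(y_n)\ge{\bf m}(C)\|y_n\|^2$; together these produce exactly the asserted estimate, with $R:=\sqrt{C(y_0)}\,{\bf m}(C)^{-1/2}\exp\bigl(\int_0^T|\gamma(s)|\diff s\bigr)$. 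Using the formulation~\eqref{split} and the crude bound $\exp\bigl(\int_0^T|\gamma|\bigr)$ on all the exponential factors there, the auxiliary variables then satisfy $\|\widehat{z_n}\|,\|z_{n+1}\|\le\widehat R:=\exp\bigl(\int_0^T|\gamma|\bigr)R$.

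For existence and uniqueness, the obstacle is that Lemma~\ref{lem:scheme} requires globally Lipschitz coefficients, which fails in examples such as the rigid body (unbounded $B$) or Lotka--Volterra (unbounded $\nabla H$); so I would truncate the coefficients outside the ball in which Step~1 confines the relevant quantities, taking care to preserve the discrete conformal identity. Fix a smooth cutoff $\theta\colon[0,\infty)\to[0,1]$ with $\theta\equiv1$ on $[0,2\widehat R]$ and $\theta\equiv0$ on $[3\widehat R,\infty)$, and set $\tilde B(y)=\theta(\|y\|)B(y)$: this $\tilde B$ is still skew-symmetric, is bounded and of class $\mathcal C^1$ with bounded derivative, coincides with $B$ on $\{\|y\|\le 2\widehat R\}$, and still satisfies $\nabla C^T\tilde B\equiv0$, so $C$ remains a Casimir for the $\tilde B$-scheme, while the Hamiltonians — in particular the homogeneity of $H$ needed for~\eqref{prob1} — are left untouched. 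In the case~\eqref{prob} I would additionally replace the $H_m$'s by $\theta(\|\cdot\|)H_m$, so that the $\tilde B$-scheme has globally Lipschitz coefficients and Lemma~\ref{lem:scheme} applies verbatim; in the case~\eqref{prob1}, where $H$ must not be touched, I would instead note that the fixed-point map~\eqref{fixpoint} of the $\tilde B$-scheme is still a contraction for small $\tau$ — the compact support of $\tilde B$ makes it equal to $\widehat{z_n}$ outside a ball of radius of order $\widehat R$ and Lipschitz inside it with a constant proportional to $\tau+\sqrt{2k\tau|\log\tau|}\to0$, which is exactly the estimate in the proof of Lemma~\ref{lem:scheme}, now needing only the finite suprema of $\nabla H$ and $\nabla^2 H$ on that ball. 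Either way one obtains $\overline\tau_k(\gamma,T,y_0)>0$, depending on $\gamma,T,y_0$ only through $\widehat R$ and the data of the problem, such that for $\tau\in(0,\overline\tau_k(\gamma,T,y_0))$ the $\tilde B$-scheme admits a unique solution $\bigl(\tilde y_n\bigr)_{0\le n\le N}$.

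To close the argument: since $\tilde B$ is skew-symmetric and $C$ is a Casimir for it, the computation in the proof of Proposition~\ref{prop:confCasimir} (resp.\ Proposition~\ref{prop:balanceNum}) — which uses only the skew-symmetry of the structure matrix and the homogeneity of $C$ (resp.\ $H$), both preserved — applies verbatim to $\bigl(\tilde y_n\bigr)$; hence by Step~1 one gets $\|\tilde y_n\|\le R$ for all $n$, and the corresponding auxiliary variables in~\eqref{split} are bounded by $\widehat R<2\widehat R$. Thus $\tilde B$ (and, in the~\eqref{prob} case, each $\tilde H_m$) is only ever evaluated on $\{\|\cdot\|\le 2\widehat R\}$, where it coincides with $B$ (resp.\ $H_m$), so $\bigl(\tilde y_n\bigr)$ is in fact a solution of the original scheme~\eqref{confexp} (resp.~\eqref{confexp1}): this gives existence. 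Conversely, by Step~1 any solution of the original scheme satisfies the same bounds, hence solves the $\tilde B$-scheme as well, hence equals $\bigl(\tilde y_n\bigr)$; this gives uniqueness, and the announced estimate is the one already obtained in Step~1.

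The main obstacle is the apparent circularity between well-posedness — which, through Lemma~\ref{lem:scheme}, seems to need global structure — and the a priori bound, which is only available once a numerical solution is in hand. The step requiring the most care is to arrange the truncation so that it does not destroy the algebraic identities of Propositions~\ref{prop:confCasimir} and~\ref{prop:balanceNum}: truncating \emph{only} $B$, and by a radial scalar factor, keeps the structure matrix skew-symmetric (and the quadratic Casimir / homogeneous Hamiltonian intact), and one then checks that the truncated scheme, being confined by its own conformal identity, never leaves the region where it agrees with the original one. The remaining estimates are routine.
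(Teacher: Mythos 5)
Your proof is correct and takes essentially the same route as the paper: first the almost sure a priori bound on any numerical solution via Propositions~\ref{prop:confCasimir} and~\ref{prop:balanceNum} together with $C(y)\ge{\bf m}(C)\|y\|^2$ (resp.\ $H(y)\ge{\bf m}(H)\|y\|^p$), then a modification of the coefficients outside the resulting ball so that Lemma~\ref{lem:scheme} applies. The paper compresses the second step into the phrase ``it can be assumed that $B$ has a bounded first order derivative and the Hamiltonian functions have bounded second order derivatives''; your explicit radial cutoff of $B$ --- chosen precisely so that skew-symmetry and the Casimir/homogeneity identities survive, which is what confines the truncated scheme to the region where it coincides with the original one --- is exactly the argument that phrase is standing in for.
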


\begin{proof}
First, consider a solution $\bigl(y_n\bigr)_{0\le n\le N}$ to the numerical scheme~\eqref{confexp}, and assume that $C$ is a quadratic Casimir function for~\eqref{prob}. Owing to Proposition~\ref{prop:confCasimir}, for all $n\in\{0,\ldots,N\}$ one obtains the almost sure identity
\[
C(y_n)=\exp\left( -2\int_{0}^{t_{n}}\gamma(s)\diff s  \right)C(y_0)
\]
and thus the almost sure upper bound
\[
C(y_n)\le \exp\bigl(2\int_{0}^{T}|\gamma(s)|\,\diff s\bigr)C(y_0).
\]
Using the inequality
\[
C(y)\ge {\bf m}(C)\|y\|^2,\quad \forall~y\in\R^d
\]
then provides the first almost sure upper bound stated in Corollary~\ref{cor:num}.

Second, consider a solution $\bigl(y_n\bigr)_{0\le n\le N}$ to the numerical scheme~\eqref{confexp1}, and assume that the Hamiltonian function $H$ is homogeneous of degree $p$. Owing to Proposition~\ref{prop:balanceNum}, for all $n\in\{0,\ldots,N-1\}$ one obtains the almost sure identity
\[
H(y_n)=\exp\left(-p\int_{0}^{t_{n}}\gamma(s)\diff s  \right)H(y_0)
\]
and thus the almost sure upper bound
\[
H(y_n)\le \exp\bigl(p\int_{0}^{T}|\gamma(s)|\,\diff s\bigr)H(y_0).
\]
Using the inequality
\[
H(y)\ge {\bf m}(H)\|y\|^p,\quad \forall~y\in\R^d
\]
then provides the second almost sure upper bound stated in Corollary~\ref{cor:num}.

Finally, note that due to the almost sure upper bounds
\[
\underset{0\le n\le N}\sup~\|y_n\|\le R(\gamma,T,y_0)
\]
on any numerical solution $\bigl(y_n\bigr)_{0\le n\le N}$ to the numerical schemes~\eqref{confexp} and~\eqref{confexp1},
it can be assumed that the structure matrix $B$ has a bounded first order derivative and that the Hamiltonian functions $H_0,H_1,\ldots,H_M$ have bounded second order derivatives. One can then apply the result of Lemma~\ref{lem:scheme} to justify the existence of $\overline{\tau}_k(\gamma,T,y_0)>0$ such that the numerical scheme~\eqref{confexp} admits a unique solution if the time-step size satisfies the condition $\tau\in(0,\overline{\tau}_k(\gamma,T,y_0))$.
The proof is thus completed.
\end{proof}

\begin{remark}
Proposition~\ref{prop:confCasimir} and Corollary~\ref{cor:num} can be applied for the damped stochastic rigid body problem from Example~\ref{exprigid}:
the numerical scheme~\eqref{confexp} admits a unique solution which satisfies the same almost sure upper bounds as the exact solution of~\eqref{prob} in that example.
Concerning the damped stochastic Maxwell--Bloch system from Example~\ref{expmax}, the numerical scheme~\eqref{confexp} also admits a unique solution:
Corollary~\ref{cor:num} cannot be applied. However, Proposition~\ref{prop:confCasimir} provides almost sure upper bounds on $C(y_n)$ and the result of Lemma~\ref{lem:scheme} can then be  applied. We refer to Remark~\ref{remark:exist} for a similar discussion concerning the exact solutions of the damped stochastic rigid body and Maxwell--Bloch systems.
\end{remark}



\section{Strong and weak convergence results}\label{sec:convergence}

We now state and prove the strong and weak error estimates for the stochastic conformal exponential integrator~\eqref{confexp}
when applied to the linearly damped stochastic Poisson systems~\eqref{prob}~and~\eqref{prob1}.

To perform the convergence analysis, we assume that the following conditions hold, concerning the systems~\eqref{prob} or~\eqref{prob1}
and the numerical schemes~\eqref{confexp} or~\eqref{confexp1}.
\begin{assumption}\label{ass:prob}
Let $\bigl(y(t)\bigr)_{t\in[0,T]}$ denote the solution to the linearly damped stochastic Poisson system~\eqref{prob} and given the time-step size $\dt=T/N$ let $\bigl(y_n\bigr)_{n\in\{0,\ldots,N\}}$ denote the solution to~\eqref{confexp}.

There exists a positive real number $R(T,y_0)\in(0,\infty)$ and an integer $N(T,y_0)\in\N$ such that almost surely one has
\[
\underset{t\in[0,T]}\sup~\|y(t)\|+\underset{N\ge N(T,y_0)}\sup~\underset{n\in\{0,\ldots,N\}}\max~\|y_n\|\le R(T,y_0).
\]
\end{assumption}

\begin{assumption}\label{ass:prob1}
Let $\bigl(y(t)\bigr)_{t\in[0,T]}$ denote the solution to the linearly damped stochastic Poisson system~\eqref{prob1} and given the time-step size $\dt=T/N$ let $\bigl(y_n\bigr)_{n\in\{0,\ldots,N\}}$ denote the solution to~\eqref{confexp1}.

There exists a positive real number $R(T,y_0)\in(0,\infty)$ and an integer $N(T,y_0)\in\N$ such that almost surely one has
\[
\underset{t\in[0,T]}\sup~\|y(t)\|+\underset{N\ge N(T,y_0)}\sup~\underset{n\in\{0,\ldots,N\}}\max~\|y_n\|\le R(T,y_0).
\]
\end{assumption}

Notice that, owing to Corollary~\ref{cor:exist} and to Corollary~\ref{cor:num}, Assumption~\ref{ass:prob} is satisfied if the linearly damped stochastic Poisson system~\eqref{prob} admits a quadratic Casimir function $C(y)=\frac12y^TDy$ with a symmetric positive definite matrix $D$, and Assumption~\ref{ass:prob1} is satisfied if the Hamiltonian function $H$ is homogeneous of degree $p$ and satisfies ${\bf m}(H)=\underset{y\in\R^d;~\|y\|=1}\min~H(y)>0$.

First, we show that, under Assumption~\ref{ass:prob}, the numerical scheme~\eqref{confexp} is of strong order $1/2$ when applied to the linearly damped stochastic Poisson system~\eqref{prob}.
\begin{theorem}\label{th:strong}
Consider the solution $\bigl(y(t)\bigr)_{t\in[0,T]}$ to the linearly damped stochastic Poisson system~\eqref{prob} and the solution $\bigl(y_n\bigr)_{n\in\{0,\ldots,N\}}$ to the numerical scheme~\eqref{confexp} with time-step size $\dt=T/N$ and with the truncated Wiener increments defined by~\eqref{eq:What} with $k\ge 1$.

Let Assumption~\ref{ass:prob} be satisfied. Moreover, assume that the structure matrix $B$ is of class $\mathcal{C}^4$, and that for all $m\in\{0,1,\ldots,M\}$ the Hamiltonian functions $H_m$ are of class $\mathcal{C}^5$. Finally, assume that $\gamma$ is of class $\mathcal{C}^1$.

There exists $C(T,y_0)\in(0,\infty)$ and $N(T,y_0)\in\N$ such that for all $N\ge N(T,y_0)$ one has
\[
\underset{n\in\{0,\ldots,N\}}\sup~\bigl(\E[\|y_n-y(t_n)\|^2\bigr)^{\frac12}\le C(T,y_0)\dt^{\frac12}.
\]
\end{theorem}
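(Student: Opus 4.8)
The plan is to carry out a standard one-step-error / global-error analysis for the Strang-type splitting scheme, working on the almost sure event where both the exact solution and the numerical solution stay in a fixed ball $\{\|y\|\le R(T,y_0)\}$ (Assumption~\ref{ass:prob}). On this event all the coefficients $B$, $\nabla H_m$ and their relevant derivatives are bounded, so one may as well replace $B$ and the $H_m$ by compactly supported modifications with globally bounded derivatives; this makes the scheme well-posed by Lemma~\ref{lem:scheme} and legitimises all Taylor expansions. I would also first convert the Stratonovich SDE~\eqref{prob} to its Itô form, picking up the usual correction term $\frac12\sum_m \mathrm{D}(B\nabla H_m)\,B\nabla H_m$, and likewise expand the implicit midpoint/discrete-gradient map in~\eqref{confexp}: using the fixed-point characterisation one shows $e^{X_n^1}y_{n+1}-e^{X_n^0}y_n = B(e^{X_n^0}y_n)\sum_m\nabla H_m(e^{X_n^0}y_n)\widehat{\Delta W_{m,n}} + O(\tau)+O(\sum_m|\widehat{\Delta W_{m,n}}|^2)$, and push the expansion one order further to capture the $\frac12\sum_m \mathrm{D}(B\nabla H_m)B\nabla H_m\,\widehat{\Delta W_{m,n}}^2$ term, which is exactly the discrete analogue of the Stratonovich-to-Itô correction.

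Next I would set up the telescoping error. Write $e_n = y_n - y(t_n)$ and split the exact solution over one step into the same splitting structure: $y(t_{n+1}) = e^{-\int_{t_{n+1/2}}^{t_{n+1}}\gamma}\,\tilde z(t_{n+1})$ where $\tilde z$ solves the undamped subsystem on $[t_n,t_{n+1}]$ started from $e^{-\int_{t_n}^{t_{n+1/2}}\gamma}y(t_n)$ — this is exact because the damping subflow and the exponential factors commute with the linear scaling (it is precisely the conformal structure exploited in Proposition~\ref{prop:Casimir}). So the exact solution itself has the form~\eqref{split} with $\widehat{z}_n$ replaced by the exact value and $z_{n+1}$ replaced by $\tilde z(t_{n+1})$, and the whole error analysis reduces to comparing the energy-preserving one-step map of \cite{MR3210739} applied to $\widehat{z}_n$ against the exact flow of the undamped Poisson system applied to $e^{-\int_{t_n}^{t_{n+1/2}}\gamma}y(t_n)$, then multiplying through by the bounded deterministic factor $e^{-\int_{t_{n+1/2}}^{t_{n+1}}\gamma}$ and accounting for the truncation $\Delta W_{m,n}\mapsto\widehat{\Delta W_{m,n}}$ via~\eqref{wtrunc}. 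The local error of the discrete-gradient/midpoint scheme for the undamped system is the classical one: the conditional expectation of the one-step error is $O(\tau^2)$ (weak/mean consistency order $2$) and the mean-square one-step error is $O(\tau^{3/2})$ (because the leading stochastic terms $B\nabla H_m\,\Delta W_m$ match, and the mismatch starts at the $\Delta W_i\Delta W_j$ level which has $L^2$-size $\tau$ but whose martingale part has $L^2$-size $\tau^{3/2}$, while the non-martingale part is caught by the Itô correction up to $O(\tau^2)$). The truncation of the increments contributes, by~\eqref{wtrunc} with $k\ge1$, an extra error of size $\tau^{(1+k)/2}\le\tau$ in $L^2$ per step, hence negligible at order $1/2$.

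Then comes the standard Gronwall/regularisation step: write $e_{n+1} = \Phi_\tau(y_n) - \Phi_\tau^{\mathrm{ex}}(y(t_n)) = [\Phi_\tau(y_n)-\Phi_\tau(y(t_n))] + [\Phi_\tau(y(t_n))-\Phi_\tau^{\mathrm{ex}}(y(t_n))]$, where $\Phi_\tau$ is the numerical one-step map and $\Phi_\tau^{\mathrm{ex}}$ the exact one. The first bracket is a stability term: since the derivatives of $\Phi_\tau$ are bounded by $1+C\tau$ uniformly (again using boundedness of the coefficients on the ball, plus boundedness of $|\widehat{\Delta W_{m,n}}|\le\sqrt{\tau}A_{\tau,k}$ which is where the truncation is essential for controlling the $\omega$-dependence of the Lipschitz constant), one gets $\E[\|\Phi_\tau(y_n)-\Phi_\tau(y(t_n))\|^2\mid\mathcal F_{t_n}]\le(1+C\tau)\|e_n\|^2$ modulo handling the fact that $\widehat{\Delta W_{m,n}}$ has nonzero higher moments — more carefully, one separates the martingale-increment part (mean zero given $\mathcal F_{t_n}$, contributing $C\tau\E\|e_n\|^2$ after expectation) from the drift-like part. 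The second bracket is the local error just estimated. Taking expectations, using the tower property so that the mean-zero part of the local error only enters quadratically ($O(\tau^3)$ per step, summing to $O(\tau^2)$) while the $L^2$-size-$\tau^{3/2}$ part enters linearly only through its conditional-mean, one arrives at $\E\|e_{n+1}\|^2\le(1+C\tau)\E\|e_n\|^2 + C\tau^2$, and discrete Gronwall gives $\sup_n\E\|e_n\|^2\le C(T,y_0)\tau$, i.e.\ strong order $1/2$. The main obstacle, and the part requiring genuine care rather than bookkeeping, is the rigorous expansion of the fully implicit discrete-gradient map~\eqref{confexp}: one must show the fixed point $z_{n+1}$ depends smoothly enough on $\widehat z_n$ and on $\tau,\widehat{\Delta W_{m,n}}$ to produce the correct $O(\tau^{3/2})$ mean-square and $O(\tau^2)$ mean local errors including the right Stratonovich correction term, and that all implied constants are deterministic thanks to the $A_{\tau,k}$-truncation and the a priori bound $R(T,y_0)$; everything downstream is a routine Gronwall argument of the type in \cite{MR3210739,MR4512617}.
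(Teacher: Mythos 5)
Your overall architecture is the same as the paper's: localise almost surely via Assumption~\ref{ass:prob} so that $B$, the $H_m$ and their derivatives may be treated as bounded, expand both the exact one-step flow and the implicit fixed-point map~\eqref{confexp} to second order (Lemmas~\ref{lem:expansionexact} and~\ref{lem:expansionnum} in the paper), control the truncation of the increments via~\eqref{wtrunc}, and conclude by a Milstein-type local-to-global argument. However, there is a genuine error at the crux of the argument: you claim that the mean-square one-step error is $O(\tau^{3/2})$ with conditional mean $O(\tau^{2})$. For $M\ge 2$ this is false, and if it were true it would prove strong order $1$, contradicting the theorem you are proving (and the paper's Theorem~\ref{th:strongM1}, which obtains order $1$ only for $M=1$). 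The obstruction is exactly the off-diagonal second-order terms: the exact solution carries $f_m'(y)f_\ell(y)\int_{t_n}^{t_{n+1}}\int_{t_n}^{s}\circ\diff W_\ell\circ\diff W_m$ for $m\ne\ell$, while the scheme produces $\tfrac12 f_m'(y)f_\ell(y)\widehat{\Delta W_{m,n}}\,\widehat{\Delta W_{\ell,n}}$; their difference is (up to truncation) the antisymmetric L\'evy area, which has conditional mean zero but $L^2$-norm of exact order $\tau$, not $\tau^{3/2}$. Your parenthetical claim that "its martingale part has $L^2$-size $\tau^{3/2}$" is where the argument breaks. The correct local estimates are: mean-square $O(\tau)$, conditional mean $O(\tau^{3/2})$, and it is these — not the ones you state — that feed your (correct) recursion $\E\|e_{n+1}\|^2\le(1+C\tau)\E\|e_n\|^2+C\tau^2$ and yield order $1/2$. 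As written, your stated local bounds and your recursion are mutually inconsistent, and the mechanism that caps the order at $1/2$ for $M\ge2$ is misidentified.

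A secondary inaccuracy: you assert that the exact solution over one step factors \emph{exactly} as (half-step damping) $\circ$ (undamped Poisson flow) $\circ$ (half-step damping), "because the damping subflow commutes with the linear scaling". This is false in general: the linear vector field $-\gamma(t)y$ commutes with $B(y)\nabla H_m(y)$ only when the latter is homogeneous of degree one, which none of the paper's examples satisfies. Proposition~\ref{prop:Casimir} gives exact conformal decay of Casimir/Hamiltonian \emph{functionals}, not an exact factorisation of the \emph{flow}. The resulting Strang splitting defect is a mean-zero term of $L^2$-size $O(\tau^{3/2})$ plus an $O(\tau^2)$ drift term, so it is harmless for the order-$1/2$ (indeed even the order-$1$) conclusion, but the exactness claim should be removed; the paper avoids this entirely by Taylor-expanding the exact damped flow $Y^{n,y}$ directly rather than refactoring it.
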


Second, we show that, under Assumption~\ref{ass:prob}, the numerical scheme~\eqref{confexp} is of strong order $1$ when applied to the linearly damped stochastic Poisson
system~\eqref{prob} with $M=1$.
\begin{theorem}\label{th:strongM1}
Consider the solution $\bigl(y(t)\bigr)_{t\in[0,T]}$ to the linearly damped stochastic Poisson system~\eqref{prob} with one noise and the solution $\bigl(y_n\bigr)_{n\in\{0,\ldots,N\}}$ to the numerical scheme~\eqref{confexp} with time-step size $\dt=T/N$ and with the truncated Wiener increments defined by~\eqref{eq:What} with $k\ge 2$.

Let Assumption~\ref{ass:prob} be satisfied. Moreover, assume that the structure matrix $B$ is of class $\mathcal{C}^4$, and that the Hamiltonian functions $H_0$ and $H_1$ are of class
$\mathcal{C}^5$. Finally, assume that $\gamma$ is of class $\mathcal{C}^1$.

There exists $C(T,y_0)\in(0,\infty)$ and $N(T,y_0)\in\N$ such that for all $N\ge N(T,y_0)$ one has
\[
\underset{n\in\{0,\ldots,N\}}\sup~\bigl(\E[\|y_n-y(t_n)\|^2\bigr)^{\frac12}\le C(T,y_0)\dt.
\]
\end{theorem}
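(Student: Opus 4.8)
The plan is to follow the same overall strategy as for Theorem~\ref{th:strong}, namely a one-step error analysis combined with a stability/Gronwall argument, but to upgrade the local error estimate by exploiting the structure of the single-noise case. Thanks to Assumption~\ref{ass:prob}, all quantities may be localised: the exact solution $y(t)$ and the numerical solution $y_n$ stay in a fixed compact set almost surely, so all the coefficient functions $B$, $\nabla H_0$, $\nabla H_1$ and their derivatives (up to the orders assumed, $\mathcal{C}^4$ and $\mathcal{C}^5$) are bounded together with the relevant derivatives on the region that matters, and the same holds for $\gamma$ and $\gamma'$ on $[0,T]$. Because the scheme~\eqref{confexp} splits exactly into the deterministic damping half-steps and the midpoint/discrete-gradient core step~\eqref{fixpoint}, and the damping factors $e^{X_n^0}, e^{X_n^1}$ are deterministic and $1+\mathcal{O}(\dt)$, it suffices to prove that the core map $\widehat z_n \mapsto z_{n+1}$ approximates the flow of $\diff z = B(z)\nabla H_0(z)\diff t + B(z)\nabla H_1(z)\circ \diff W(t)$ to strong order~$1$ over one step.

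The key step is a refined stochastic Taylor / Itô--Stratonovich expansion of both the exact one-step flow and the numerical core step, comparing them term by term. Writing everything in Itô form, the exact increment over $[t_n,t_{n+1}]$ has an expansion in the iterated integrals $\Delta W_n$, $\int\!\int \diff W\diff W \sim \tfrac12(\Delta W_n^2-\dt)$, $\dt$, and higher-order terms; for a scheme of strong order~$1$ one must match the coefficients of $\Delta W_n$, of $\Delta W_n^2$ and of $\dt$. The discrete gradient evaluated at the midpoint, $B\!\left(\tfrac{\widehat z_n + z_{n+1}}{2}\right)\overline\nabla H_j(\widehat z_n, z_{n+1})$, expands around $\widehat z_n$ and, crucially, the midpoint rule is symmetric, so the leading diffusion term is reproduced exactly, the $\tfrac12\Delta W_n^2$ (Stratonovich correction) term is captured with the correct $\tfrac12 (\partial_z(B\nabla H_1)) B\nabla H_1$ coefficient — this is precisely the feature that makes midpoint-type schemes strongly convergent of order~$1$ in the single-noise (hence commutative) case — and the remaining discrepancy is a drift-type term of size $\mathcal{O}(\dt^{3/2})$ plus higher iterated integrals. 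The truncation of the Wiener increment contributes, by~\eqref{wtrunc} with $k\ge 2$, an error of size $C\dt^{3/2}$ in $L^2$, which is why $k\ge 2$ is required rather than $k\ge 1$. One must also control the implicitly defined $z_{n+1}$: from~\eqref{fixpoint} and the contraction estimate in Lemma~\ref{lem:scheme}, $z_{n+1}-\widehat z_n = \mathcal{O}(|\Delta W_n|+\dt)$ with all moments, and one bootstraps an expansion $z_{n+1} = \widehat z_n + \widehat{\Delta W_n} B\nabla H_1(\widehat z_n) + \mathcal{O}(\dt)$ to substitute back into the midpoint argument. The upshot is a local error bound of the form $\E[\,\text{error}_n \mid \mathcal{F}_{t_n}] = \mathcal{O}(\dt^{2})$ for the mean (the "weak" part of the local error) and $\bigl(\E[\|\text{error}_n\|^2]\bigr)^{1/2} = \mathcal{O}(\dt^{3/2})$ for the mean-square part, which are exactly the two estimates feeding the standard one-step-to-global lemma (Milstein-type) that yields global strong order~$\dt^{3/2-1/2}=\dt$.

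The final step is the propagation of the local errors: introduce $e_n = y_n - y(t_n)$, write $e_{n+1}$ as a telescoped sum of propagated one-step errors, use the almost-sure Lipschitz bounds on the (localised) numerical flow map together with the exact flow's Lipschitz dependence on initial data, and close the estimate with a discrete Gronwall inequality in the $L^2(\Omega)$ norm, separating the martingale-difference part of the local errors (which contributes $\mathcal{O}(\dt^{3/2})$ per step and accumulates like $\sqrt{N}\,\dt^{3/2}=\mathcal{O}(\dt)$) from the mean part (which contributes $\mathcal{O}(\dt^{2})$ per step and accumulates like $N\dt^{2}=\mathcal{O}(\dt)$); this gives $\sup_{n}\bigl(\E\|e_n\|^2\bigr)^{1/2}\le C(T,y_0)\dt$ for $N\ge N(T,y_0)$.

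I expect the main obstacle to be the bookkeeping in the term-by-term expansion of the implicit midpoint/discrete-gradient core step: one has to expand $\overline\nabla H_j(\widehat z_n, z_{n+1})$ and $B$ evaluated at the midpoint about $\widehat z_n$, substitute the expansion of the implicit $z_{n+1}$, and verify carefully that the Stratonovich correction term is matched with the correct coefficient so that no $\mathcal{O}(\dt^{1/2})$ or uncompensated $\mathcal{O}(\dt)$ remainder with nonzero conditional mean survives — in other words, checking that the local mean-square error is genuinely $\mathcal{O}(\dt^{3/2})$ and not merely $\mathcal{O}(\dt)$. The single-noise assumption $M=1$ is exactly what removes the non-commuting iterated integrals $\int\!\int \diff W_{m}\diff W_{m'}$ that would otherwise obstruct order~$1$; making this explicit is the heart of the argument. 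The rest (localisation via Assumption~\ref{ass:prob}, the truncation estimates~\eqref{wtrunc}, the discrete Gronwall step) is routine once the local error estimate is in place.
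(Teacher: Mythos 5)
Your proposal follows essentially the same route as the paper: localisation via Assumption~\ref{ass:prob}, Stratonovich--Taylor expansions of the exact one-step flow and of the implicit discrete-gradient core step (the paper obtains the latter by differentiating the fixed-point map $\Phi$ at $\tau=w=0$ rather than by your bootstrap, but to the same effect), the observation that $M=1$ makes the iterated-integral terms coincide with $\tfrac12 f_1'f_1\,\Delta W_{1,n}^2$ so the local error is $\mathcal{O}(\dt^{3/2})$ in mean-square and $\mathcal{O}(\dt^2)$ in mean, the use of~\eqref{wtrunc} with $k\ge 2$ for the truncation, and finally the Milstein-type local-to-global argument (which the paper simply cites as the fundamental theorem of mean-square convergence, while you sketch its Gronwall proof). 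No gaps; this is the paper's argument.
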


Third, we show that, under Assumption~\ref{ass:prob1}, the numerical scheme~\eqref{confexp1} is of strong order $1$ when applied to the linearly damped stochastic
Poisson system with one noise~\eqref{prob1}.
\begin{theorem}\label{th:strong1}
Consider the solution $\bigl(y(t)\bigr)_{t\in[0,T]}$ to the linearly damped stochastic Poisson system~\eqref{prob1} and the solution $\bigl(y_n\bigr)_{n\in\{0,\ldots,N\}}$ to the numerical scheme~\eqref{confexp1} with time-step size $\dt=T/N$ and with the truncated Wiener increments defined by~\eqref{eq:What} with $k\ge 2$.

Let Assumption~\ref{ass:prob1} be satisfied. Moreover, assume that the structure matrix $B$ is of class $\mathcal{C}^4$, that the Hamiltonian function $H$ is of class $\mathcal{C}^5$.
Finally, assume that $\gamma$ is of class $\mathcal{C}^1$.

There exists $C(T,y_0)\in(0,\infty)$ and $N(T,y_0)\in\N$ such that for all $N\ge N(T,y_0)$ one has
\[
\underset{n\in\{0,\ldots,N\}}\sup~\bigl(\E[\|y_n-y(t_n)\|^2\bigr)^{\frac12}\le C(T,y_0)\dt.
\]
\end{theorem}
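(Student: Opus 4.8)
The plan is to observe, first, that Theorem~\ref{th:strong1} is essentially a special case of Theorem~\ref{th:strongM1}: the system~\eqref{prob1} is exactly~\eqref{prob} with $M=1$, $H_0=H$ and $H_1=cH$, and with these choices the scheme~\eqref{confexp1} coincides with~\eqref{confexp}; under this identification Assumption~\ref{ass:prob1} is precisely Assumption~\ref{ass:prob}, the regularity assumption $H\in\mathcal C^5$ gives $H_0,H_1\in\mathcal C^5$, and $k\ge 2$ is assumed. The conclusion therefore follows at once from Theorem~\ref{th:strongM1}. I would nevertheless include a self-contained argument, which is a little shorter in this setting because here the diffusion vector field equals $c$ times the drift vector field.

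For the self-contained proof, I would first use the almost sure bounds of Assumption~\ref{ass:prob1} (together with the well-posedness of the scheme for small $\dt$ from Lemma~\ref{lem:scheme} and Corollary~\ref{cor:num}) to reduce to the case where $B$, $H$ and their relevant derivatives, as well as $\gamma$ and $\gamma'$, are bounded; all constants below depend on $R(T,y_0)$. The estimate is then deduced from the classical mean-square convergence theorem for one-step schemes: it suffices to prove, for the one-step map of~\eqref{confexp1}, the local error bounds
\[
\bigl\|\E\bigl[\bar y_{t_n,y_n}(t_{n+1})-y_{n+1}\mid\mathcal F_{t_n}\bigr]\bigr\|\le C\dt^{2},\qquad \Bigl(\E\bigl[\|\bar y_{t_n,y_n}(t_{n+1})-y_{n+1}\|^2\mid\mathcal F_{t_n}\bigr]\Bigr)^{\frac12}\le C\dt^{\frac32},
\]
where $\bar y_{t_n,y_n}$ denotes the solution of~\eqref{prob1} started from $y_n$ at time $t_n$, together with the mean-square Lipschitz stability of the map $y_n\mapsto y_{n+1}$, which follows from the contraction estimate established in the proof of Lemma~\ref{lem:scheme}. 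Since the local exponents $p_1=2$ and $p_2=\frac32$ satisfy $p_1\ge p_2+\frac12$, this yields the global rate $p_2-\frac12=1$.

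To obtain the local error bounds I would use the splitting form~\eqref{split} of the scheme, which reads $y_{n+1}=\mathcal D^{(n)}_{1/2}\bigl(\Phi^{\mathrm{DG}}_{\widehat\theta_n}(\mathcal D^{(n)}_{1/2}y_n)\bigr)$, where $\mathcal D^{(n)}_{1/2}$ is the exact (linear) damping half-step, $\widehat\theta_n=\dt+c\widehat{\Delta W_n}$, and $\Phi^{\mathrm{DG}}_h$ is the discrete-gradient (averaged vector field) one-step map with step $h$ for the autonomous system $\dot u=B(u)\nabla H(u)$. The structural fact specific to~\eqref{prob1} is that the exact solution of the Stratonovich equation $\diff z=B(z)\nabla H(z)\circ\diff\theta_s$, with $\theta_s=(s-t_n)+c(W_s-W_{t_n})$, satisfies $z(t_{n+1})=\varphi_{\Delta\theta_n}(z(t_n))$ with $\varphi$ the deterministic flow of $\dot u=B(u)\nabla H(u)$ and $\Delta\theta_n=\dt+c\Delta W_n$; this is checked by the It\^o chain rule. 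Combined with a standard symmetric-splitting error estimate for the two exact damping half-flows, this gives $\bar y_{t_n,y_n}(t_{n+1})=\mathcal D^{(n)}_{1/2}\bigl(\varphi_{\Delta\theta_n}(\mathcal D^{(n)}_{1/2}y_n)\bigr)+\rho_n$ with $\|\E[\rho_n\mid\mathcal F_{t_n}]\|\le C\dt^2$ and $(\E[\|\rho_n\|^2\mid\mathcal F_{t_n}])^{1/2}\le C\dt^{3/2}$. It then remains to compare $\varphi_{\Delta\theta_n}$ with $\Phi^{\mathrm{DG}}_{\widehat\theta_n}$ at the point $w=\mathcal D^{(n)}_{1/2}y_n$: passing through the intermediate $\varphi_{\widehat\theta_n}$, the difference $\varphi_{\Delta\theta_n}(w)-\varphi_{\widehat\theta_n}(w)$ is controlled by~\eqref{wtrunc} (and has conditional mean $O(\dt^2)$ since $\chi_{\dt,k}$ is odd), while $\varphi_{\widehat\theta_n}(w)-\Phi^{\mathrm{DG}}_{\widehat\theta_n}(w)=O(|\widehat\theta_n|^3)$ because the discrete-gradient map, being a symmetric one-step method, is of deterministic order two; since $\Delta\theta_n$ and $\widehat\theta_n$ are of size $\sqrt\dt$ in every $L^p$, the term $|\widehat\theta_n|^3$ has $L^2$-norm $O(\dt^{3/2})$, and its conditional expectation is $O(\dt^2)$ because $(\widehat\theta_n)^3$ is a sum of $\dt^3$, of odd powers of $\widehat{\Delta W_n}$, and of $3c^2\dt\,(\widehat{\Delta W_n})^2$ (here I would invoke~\eqref{wtrunc2}). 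Assembling these contributions gives the two local error bounds.

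The step I expect to be the main obstacle is the verification that every $O(\dt^{3/2})$ contribution to the local error is an odd function of the Wiener increment, or otherwise has conditional mean $O(\dt^2)$: this cancellation — which fails when $M\ge 2$, and in Theorem~\ref{th:strongM1} because there the drift and diffusion vector fields differ — is exactly what raises the weak local order to two, hence the strong global order from $\frac12$ to $1$. A secondary technical point is to make the flow representation and the $L^p$-controlled expansions of the implicit map $\Phi^{\mathrm{DG}}_h$ rigorous, which is where the smoothness hypotheses $B\in\mathcal C^4$, $H\in\mathcal C^5$ and $\gamma\in\mathcal C^1$ enter.
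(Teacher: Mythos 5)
Your first paragraph is exactly the paper's proof: Theorem~\ref{th:strong1} is obtained by applying Theorem~\ref{th:strongM1} with $M=1$, $H_0=H$ and $H_1=cH$, so the proposal is correct and takes the same route (the additional self-contained flow/splitting argument is a plausible but unnecessary alternative). One small correction to your closing discussion: the cancellation you describe does not ``fail'' in Theorem~\ref{th:strongM1} --- that theorem already gives strong order $1$ for arbitrary $H_0\neq H_1$ when $M=1$, because $\int_{t_n}^{t_{n+1}}\int_{t_n}^{s}\circ\diff W_1(r)\circ\diff W_1(s)=\tfrac12\Delta W_{1,n}^2$ matches the numerical term regardless of whether the drift and diffusion vector fields are proportional; proportionality is only exploited for the qualitative (energy-balance) results, not for the order of convergence.
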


Finally, we show that, the numerical scheme~\eqref{confexp} is of weak order $1$ when applied to the linearly damped stochastic Poisson system~\eqref{prob} with $M\ge 2$. Note that when $M=1$ the result also holds, and is a straightforward consequence of Theorem~\ref{th:strong1}.
\begin{theorem}\label{th:weak}
Consider the solution $\bigl(y(t)\bigr)_{t\in[0,T]}$ to the linearly damped stochastic Poisson system~\eqref{prob} with $M\geq2$ and the solution $\bigl(y_n\bigr)_{n\in\{0,\ldots,N\}}$ to the numerical scheme~\eqref{confexp} with time-step size $\dt=T/N$ and with the truncated Wiener increments defined by~\eqref{eq:What} with $k\ge 2$.

Let Assumption~\ref{ass:prob} be satisfied. Moreover, assume that the structure matrix $B$ is of class $\mathcal{C}^4$, and that for all $m\in\{0,1,\ldots,M\}$ the Hamiltonian functions $H_m$ are of class $\mathcal{C}^5$. Finally, assume that $\gamma$ is of class $\mathcal{C}^1$.

Let $\varphi\colon\mathbb R^d\to\mathbb R$ be a function of class $\mathcal C^4$.

There exists $C(T,y_0,\varphi)\in(0,\infty)$ and $N(T,y_0)\in\N$ such that for all $N\ge N(T,y_0)$ one has
\[
\left|\E[\varphi(y_N)]-\E[\varphi(y(T)) ] \right|\leq C(T,y_0,\varphi)\dt.
\]
\end{theorem}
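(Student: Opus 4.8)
The plan is to reduce the global weak error to a telescoping sum of local weak errors measured through the backward Kolmogorov function, after a localization that removes the lack of global Lipschitz continuity. Since Assumption~\ref{ass:prob} guarantees the almost sure bound $\sup_{t\in[0,T]}\|y(t)\|+\sup_{n}\|y_n\|\le R(T,y_0)$ for all $N\ge N(T,y_0)$, one may modify $B$, $H_0,\dots,H_M$ and $\varphi$ outside a large ball into functions of the same regularity class with globally bounded derivatives, without changing $y(t)$, $y_n$ (by uniqueness, using Lemma~\ref{lem:scheme} for the modified scheme) or the quantity $\E[\varphi(y_N)]-\E[\varphi(y(T))]$; after this modification the coefficients of~\eqref{prob} are globally Lipschitz and smooth with bounded derivatives. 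Define $u(t,y)=\E[\varphi(y^{t,y}(T))]$, where $y^{t,y}$ solves the modified~\eqref{prob} on $[t,T]$ with $y^{t,y}(t)=y$; standard results on stochastic flows and backward Kolmogorov equations, together with $B\in\mathcal C^4$, $H_m\in\mathcal C^5$, $\gamma\in\mathcal C^1$ and $\varphi\in\mathcal C^4$, ensure that $u(t,\cdot)$ is of class $\mathcal C^4$ with derivatives bounded uniformly in $t\in[0,T]$ (it suffices that $\nabla^3 u$ be globally Lipschitz), and $u(T,\cdot)=\varphi$.

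Writing $\E[\varphi(y_N)]-\E[\varphi(y(T))]=\E[u(T,y_N)]-u(0,y_0)=\sum_{n=0}^{N-1}\bigl(\E[u(t_{n+1},y_{n+1})]-\E[u(t_n,y_n)]\bigr)$ and using the flow identity $\E[u(t_{n+1},\bar y^{\,n}(t_{n+1}))\mid\mathcal F_{t_n}]=u(t_n,y_n)$, where $\bar y^{\,n}$ denotes the exact solution of~\eqref{prob} on $[t_n,t_{n+1}]$ started from $y_n$, the problem reduces to the local weak error estimate $|e_n|\le C(T,y_0,\varphi)\,\tau^2$ almost surely, with $e_n=\E[u(t_{n+1},y_{n+1})-u(t_{n+1},\bar y^{\,n}(t_{n+1}))\mid\mathcal F_{t_n}]$; summing the $N=T/\tau$ contributions then yields the $\mathcal O(\tau)$ bound. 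To estimate $e_n$ I would use the splitting formulation~\eqref{split}: the two damping half-steps are solved exactly, deterministically and smoothly in $\tau$, so both one-step increments $y_{n+1}-y_n$ and $\bar y^{\,n}(t_{n+1})-y_n$ can be expanded in powers of $\tau^{1/2}$ (treating $\widehat{\Delta W_{m,n}}\sim\tau^{1/2}$), a fourth-order Taylor expansion of $u(t_{n+1},\cdot)$ at $y_n$ can be inserted into $e_n$, and the conditional moments of the two increments can be compared order by order.

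The core is the one-step expansion of the implicit inner step. Solving the fixed-point equation~\eqref{fixpoint} perturbatively and using the boundedness of the coefficients and their derivatives, one gets $z_{n+1}-\widehat{z_n}=\sum_m g_m\widehat{\Delta W_{m,n}}+g_0\,\tau+\tfrac12\sum_{m,\ell}(g_\ell\cdot\nabla)g_m\,\widehat{\Delta W_{\ell,n}}\widehat{\Delta W_{m,n}}+\rho_n$, with $g_j=B\nabla H_j$ evaluated at $\widehat{z_n}$ and $\E[\|\rho_n\|^q\mid\mathcal F_{t_n}]\le C_q\tau^{3q/2}$; composing with the damping exponentials then yields the expansion of $y_{n+1}-y_n$, which the exact damping half-steps correctly endow with the $-\gamma(t_n)y_n\,\tau$ drift. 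Comparing with the stochastic Taylor expansion of $\bar y^{\,n}(t_{n+1})-y_n$ over one step, the key points are: (i) the midpoint/discrete-gradient structure reproduces exactly the Stratonovich correction $\tfrac12\sum_m(g_m\cdot\nabla)g_m$ in the drift, because the diagonal terms $\widehat{\Delta W_{m,n}}^2$ have conditional mean $\tau$ up to $\mathcal O(\tau^{k+1})$ by~\eqref{wtrunc2}; (ii) the off-diagonal terms ($\ell\ne m$) of the scheme are symmetric in $(\ell,m)$ and the corresponding mixed iterated Stratonovich integrals of the exact flow are centred, so these terms contribute nothing to the conditional moments up to the required order even though they do not match pathwise — this is precisely why strong order $1$ fails for $M\ge2$, in contrast with Theorem~\ref{th:strong1}, while weak order $1$ still holds; (iii) replacing $\Delta W_{m,n}$ by $\widehat{\Delta W_{m,n}}$ costs only $\mathcal O(\tau^{(k+1)/2})$ in $L^q$ and $\mathcal O(\tau^{k+1})$ on second moments by~\eqref{wtrunc}--\eqref{wtrunc2}, hence $\mathcal O(\tau^{3/2})$ respectively $\mathcal O(\tau^{3})$ since $k\ge2$, which is negligible. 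One then checks that the $\mathcal O(\tau)$ parts of the first conditional moments of $y_{n+1}-y_n$ and $\bar y^{\,n}(t_{n+1})-y_n$ coincide, that the $\mathcal O(\tau)$ parts of the second conditional moments coincide, that the third conditional moments of both are $\mathcal O(\tau^2)$ (odd moments of the increments vanishing), and that the fourth-order Taylor remainders have conditional expectation $\mathcal O(\tau^2)$; combined with the one-step bounds $\E[\|y_{n+1}-y_n\|^q\mid\mathcal F_{t_n}]+\E[\|\bar y^{\,n}(t_{n+1})-y_n\|^q\mid\mathcal F_{t_n}]\le C_q\tau^{q/2}$ (the first from~\eqref{fixpoint} and boundedness, the second a classical SDE estimate) and the uniform bounds on the derivatives of $u$, $B$ and $H_m$, this gives $|e_n|\le C(T,y_0,\varphi)\tau^2$.

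Summing over $n\in\{0,\dots,N-1\}$ yields $|\E[\varphi(y_N)]-\E[\varphi(y(T))]|\le N\,C(T,y_0,\varphi)\tau^2=C(T,y_0,\varphi)\,T\,\tau$ for $N\ge N(T,y_0)$, which is the claimed bound. The hard part will be step three: making the formal stochastic Taylor expansion of the \emph{implicit} discrete-gradient midpoint step rigorous, i.e. solving~\eqref{fixpoint} perturbatively with $L^q$-controlled remainders, and then carefully bookkeeping which conditional moments must coincide exactly (the $\mathcal O(\tau)$ parts of the first two) and which only need to be $\mathcal O(\tau^2)$. This is more delicate than in the strongly convergent single-noise cases, since here the argument crucially exploits the vanishing in expectation of the mismatched mixed-noise terms rather than a pathwise identity. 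By contrast, the localization step and the estimates on the truncated Wiener increments are comparatively routine.
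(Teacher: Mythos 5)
Your proposal is correct and rests on the same pillars as the paper's proof: localization via Assumption~\ref{ass:prob} so that $B$, the $H_m$ and their derivatives may be taken bounded, the backward Kolmogorov function $u(t,y)=\E[\varphi(Y^{t,y}(T))]$ with bounded spatial derivatives up to order $4$, a telescoping sum over the grid, the one-step Stratonovich--Taylor expansions of Lemmas~\ref{lem:expansionexact} and~\ref{lem:expansionnum}, and the truncation estimates~\eqref{wtrunc}--\eqref{wtrunc2} with $k\ge 2$. The one genuine difference is in how the local term is estimated. The paper splits $\E[u^T(t_{n+1},y_{n+1})]-\E[u^T(t_n,y_n)]$ into a temporal increment $\E[u^T(t_{n+1},y_n)]-\E[u^T(t_n,y_n)]$, controlled by a second-order Taylor expansion in $t$, plus a spatial increment $\E[u^T(t_{n+1},y_{n+1})]-\E[u^T(t_{n+1},y_n)]$, expanded via the weak Taylor expansion of the scheme; the leading contributions then cancel because $u^T$ solves the backward Kolmogorov equation~\eqref{kolmogo}. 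You instead invoke the martingale/flow identity $\E[u(t_{n+1},\bar y^{\,n}(t_{n+1}))\mid\mathcal F_{t_n}]=u(t_n,y_n)$ and reduce the local term to a comparison of the conditional moments of the exact and numerical one-step increments tested against a fourth-order Taylor expansion of $u(t_{n+1},\cdot)$ --- Milstein's formulation of the weak local error. The two routes are equivalent in substance: your moment-matching bookkeeping (first and second conditional moments agreeing to $O(\tau^2)$, the mismatched mixed iterated integrals for $\ell\neq m$ being centred, third moments and Taylor remainders of size $O(\tau^2)$) is exactly the content that the paper encodes in the displayed weak Taylor expansions preceding its proof and in the PDE cancellation. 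Your version avoids writing down the Kolmogorov equation explicitly but requires slightly more careful tracking of which conditional moments must coincide exactly versus merely be $O(\tau^2)$; the paper's version trades that bookkeeping for the regularity of $u^T$ in time, which it obtains from the PDE itself. Both yield the same $O(\tau)$ bound after summation.
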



The main steps for the proofs of convergence are inspired by \cite{MR4593213}: Due to the conformal properties of the proposed numerical methods, one can consider an auxiliary SDE with globally Lipschitz coefficients and bounded derivatives. For this auxiliary SDE, one can use standard techniques to prove strong and weak error estimates.
To do so, one performs Stratonovich--Taylor expansions of the exact and numerical solutions.
Finally, one compares these expansions in order to get local error estimates and prove the convergence results.

\subsection{Preliminary results}

Let us first introduce additional notation. In this section, the values of the initial value $y_0\in\R^d$ and of the time-step size $\tau=T/N$ are fixed. It is assumed that either Assumption~\ref{ass:prob} or~\ref{ass:prob1} is satisfied, and that $N\ge N(T,y_0)=N_0$.

For all $r\ge 0$, let $\mathcal{B}(0,r)=\{y\in\R^d:~\|y\|\le r\}$.

For all $m\in\{0,\ldots,M\}$, introduce the vector fields $f_m\colon \R^d\to\R^d$ given by
\[
f_m(y)=B(y)\nabla H_m(y),\quad \forall~y\in\R^d,
\]
and set
\[
g_0(t,y)=f_0(y)-\gamma(t)y+\frac12\sum_{m=1}^{M}f_m'(y) f_m(y),\quad \forall~(t,y)\in[0,T]\times\R^d.
\]
Observe that the linearly damped stochastic Poisson system~\eqref{prob} can be written as Stratonovich and It\^o SDEs
\begin{equation}\label{auxSDE}
\begin{aligned}
\diff y(t)&=\bigl(f_0(y(t))-\gamma(t)y(t)\bigr)\diff t+\sum_{m=1}^M  f_m(y(t))\circ\diff W_m(t)\\
&= g_0(t,y(t)) \diff t+\sum_{m=1}^M  f_m( y(t))\diff W_m(t),
\end{aligned}
\end{equation}
for $t\in[0,T]$, with initial value $y(0)=y_0$.


Let $R=R(T,y_0)$ be such that Assumption~\ref{ass:prob} or Assumption~\ref{ass:prob1} is satisfied. For any $y\in\mathcal{B}(0,R)$ 
and any $n\in\{0,\ldots,N-1\}$, define the stochastic process $\bigl(Y^{n,y}(t)\bigr)_{t\in[t_n,t_{n+1}]}$ which is solution to the Stratonovich SDE
\begin{equation}\label{auxSDEn}
\left\lbrace
\begin{aligned}
\diff Y^{n,y}(t)&=\bigl(f_0(Y^{n,y}(t))-\gamma(t)Y^{n,y}(t)\bigr)\diff t+\sum_{m=1}^M  f_m(Y^{n,y}(t))\circ\diff W_m(t),\quad \forall~t\in[t_n,t_{n+1}],\\
Y^{n,y}(t_n)&=y.
\end{aligned}
\right.
\end{equation}
Proceeding as in the proof of Proposition~\ref{prop:Casimir}, if $C$ is a quadratic Casimir function, one has almost surely
\[
C(Y^{n,y}(t))=\exp\bigl(-2\int_{t_n}^{t}\gamma(s)\,\diff s\bigr)C(y),\quad \forall~t\in[t_n,t_{n+1}].
\]
As a result, when Assumption~\ref{ass:prob} holds, one obtains the following almost sure upper bound
\[
\underset{t\in[t_n,t_{n+1}]}\sup~\|Y^{n,y}(t)\|\le \frac{{\bf M} (C)^{\frac12}}{{\bf m}(C)^{\frac12}}\exp\bigl(\int_{0}^{T}|\gamma(s)|\,\diff s\bigr)R=R'(T,y_0),\quad \forall~y\in \mathcal{B}(0,R),
\]
where ${\bf m}(C)=\underset{y\in\R^d;~\|y\|=1}\min~C(y)$ and ${\bf M}(C)=\underset{y\in\R^d;~\|y\|=1}\max~C(y)$.

Similarly, when considering the system~\eqref{prob1} (and thus taking $M=1$ and $H_0=H_1=H$ in the auxiliary SDE~\eqref{auxSDE}),
if the Hamiltonian function $H$ is homogeneous of degree $p$,
one has almost surely
\[
H(Y^{n,y}(t))=\exp\bigl(-p\int_{t_n}^{t}\gamma(s)\,\diff s\bigr)H(y),\quad \forall~t\in[t_n,t_{n+1}].
\]
As a result, when Assumption~\ref{ass:prob1} holds, one obtains the following almost sure upper bound
\[
\underset{t\in[t_n,t_{n+1}]}\sup~\|Y^{n,y}(t)\|\le \frac{{\bf M}(H)^{\frac{1}{p}}}{{\bf m}(H)^{\frac{1}{p}}}\exp\bigl(\int_{0}^{T}|\gamma(s)|\,\diff s\bigr)R=R'(T,y_0),\quad \forall~y\in \mathcal{B}(0,R),
\]
where ${\bf m}(H)=\underset{y\in\R^d;~\|y\|=1}\min~H(y)$ and ${\bf M}(H)=\underset{y\in\R^d;~\|y\|=1}\max~H(y)$.

Assuming that the structure matrix $B$ is of class $\mathcal{C}^4$ and that the Hamiltonian functions $H_0,H_1,\ldots,H_M$ are of class $\mathcal{C}^5$, imply that the vector fields $f_0,\ldots,f_M$ are of class $\mathcal{C}^4$ on the ball $\mathcal{B}(0,R')$. Those vector fields and their derivatives of order less than $4$ are thus bounded on the ball $\mathcal{B}(0,R')$. In particular, one can consider that the SDE~\eqref{auxSDE} (interpreted in its It\^o formulation) has globally Lipschitz continuous drift and diffusion coefficients.

The error analysis requires to identify the Stratonovich--Taylor expansion for the solution of~\eqref{auxSDEn}
given by Lemma~\ref{lem:expansionexact} below.
\begin{lemma}\label{lem:expansionexact}
There exists $C(R')\in(0,\infty)$ such that, for all $N\ge N_0$, all $n\in\{0,\ldots,N-1\}$ and all $t\in[t_n,t_{n+1}]$, one has
\begin{align}\label{taylorexact}
Y^{n,y}(t)-y&=(t-t_n)\bigl(f_0(y)-\gamma(t_n)y\bigr)+\sum_{m=1}^Mf_m(y)(W_m(t)-W_m(t_n))\nonumber\\
&\quad+\sum_{m,\ell=1}^M f_m'(y)f_{\ell}(y)\int_{t_n}^t\int_{t_n}^s\circ\diff W_{\ell}(r)\circ\diff W_m(s)+R_n^{\text{ex}}(t,y),
\end{align}
where the reminder term $R_n^{\text{ex}}(t,y)$ satisfies
\begin{align}
\bigl(\E[ \| R_n^{\text{ex}}(t,y) \|^2]    \bigr)^{\frac12}&\leq C(R')(t-t_n)^{\frac32}\leq C(R')\dt^{\frac32},\label{reminderexactstrong}\\
\|\E[ R_n^{\text{ex}}(t,y) ] \|&\leq C(R')(t-t_n)^{2}\leq C(R')\dt^{2},\label{reminderexactweak}
\end{align}
for all $n\in\{0,\ldots,N-1\}$, all $t\in[t_n,t_{n+1}]$ and all $y\in\mathcal{B}(0,R)$.
\end{lemma}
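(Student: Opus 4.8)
The plan is to obtain \eqref{taylorexact} by iterating the Stratonovich integral formulation of \eqref{auxSDEn} twice, in the spirit of the Itô--Taylor (stochastic Taylor) expansion, and to control all higher-order terms by means of a priori moment bounds. First I would record the a priori estimate: since $B\in\mathcal C^4$ and $H_0,\dots,H_M\in\mathcal C^5$, the vector fields $f_0,\dots,f_M$ and their derivatives up to order $4$ are bounded on $\mathcal B(0,R')$, and $\gamma$ is bounded on $[0,T]$; hence, using that $Y^{n,y}$ stays almost surely in $\mathcal B(0,R')$ (by the conformal identities established just above the lemma) and that \eqref{auxSDE} can be treated as an Itô SDE with globally Lipschitz coefficients, standard moment estimates give, for every $q\in[1,\infty)$, a constant $C(R',q)$ such that
\[
\E\Bigl[\sup_{t\in[t_n,t_{n+1}]}\|Y^{n,y}(t)-y\|^{2q}\Bigr]\le C(R',q)\,(t_{n+1}-t_n)^{q},\qquad \bigl\|\E[Y^{n,y}(t)-y]\bigr\|\le C(R')(t-t_n).
\]

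Next I would perform the first iteration. Writing \eqref{auxSDEn} in integral form,
\[
Y^{n,y}(t)-y=\int_{t_n}^t\bigl(f_0(Y^{n,y}(s))-\gamma(s)Y^{n,y}(s)\bigr)\diff s+\sum_{m=1}^M\int_{t_n}^t f_m(Y^{n,y}(s))\circ\diff W_m(s),
\]
I would replace $f_0(Y^{n,y}(s))$ by $f_0(y)$ and $\gamma(s)Y^{n,y}(s)$ by $\gamma(t_n)y$ in the drift integral; using the $\mathcal C^1$ regularity of $\gamma$ and $f_0$ together with the a priori bound, the resulting error is $O((t-t_n)^{3/2})$ in $L^2$ and $O((t-t_n)^2)$ in expectation, and the retained term is $(t-t_n)\bigl(f_0(y)-\gamma(t_n)y\bigr)$. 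For the diffusion integrals I would go one order deeper: by the (ordinary-looking) Stratonovich chain rule,
\[
f_m(Y^{n,y}(s))=f_m(y)+\int_{t_n}^s f_m'(Y^{n,y}(r))\circ\diff Y^{n,y}(r),
\]
and substituting \eqref{auxSDEn} once more yields $f_m(y)$, plus $\sum_{\ell}f_m'(y)f_\ell(y)\bigl(W_\ell(s)-W_\ell(t_n)\bigr)$, plus terms that are one order smaller (a $\diff r$-drift contribution and the contribution of $f_m'(Y^{n,y}(r))f_\ell(Y^{n,y}(r))-f_m'(y)f_\ell(y)$). Integrating against $\circ\diff W_m(s)$ produces exactly the first three terms on the right-hand side of \eqref{taylorexact} and collects everything else into $R_n^{\mathrm{ex}}(t,y)$.

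It then remains to estimate $R_n^{\mathrm{ex}}$. For the strong bound \eqref{reminderexactstrong} I would apply the Burkholder--Davis--Gundy inequality to the stochastic-integral contributions and Hölder's inequality to the Lebesgue-integral contributions: each remainder term either is a double integral containing at least one $\diff s$-integration, or carries a factor $Y^{n,y}(r)-y$ (respectively $\gamma(s)-\gamma(t_n)$), so the a priori $L^{2q}$ bounds yield the claimed rate $O((t-t_n)^{3/2})$. For the weak bound \eqref{reminderexactweak} I would rewrite every remainder stochastic integral in Itô form: the genuine Itô integrals are martingales and vanish under the expectation, while the Itô--Stratonovich correction terms and the $\diff s$-terms all carry an integrand whose $L^1$-norm is $O(s-t_n)$ (because $\E[Y^{n,y}(r)-y]=O(r-t_n)$ and $f_m'(Y^{n,y}(r))f_\ell(Y^{n,y}(r))-f_m'(y)f_\ell(y)$ is $O(r-t_n)$ in $L^1$), so integrating over $[t_n,t]$ gives $O((t-t_n)^2)$.

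I expect the main obstacle to be the weak estimate \eqref{reminderexactweak}: one must check that the Stratonovich iterated integrals discarded into the remainder — unlike the double integral kept in the expansion, which has mean of order $t-t_n$ — have mean of order $(t-t_n)^2$, which forces one to track the Itô--Stratonovich corrections carefully and to use that the accompanying coefficient differences vanish to first order at $y$. The drift estimates and the strong bound are routine once the a priori moment bounds are in place.
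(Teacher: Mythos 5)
Your proof is correct and follows exactly the route the paper intends: the paper does not prove Lemma~\ref{lem:expansionexact} in the text but refers to the standard Stratonovich--Taylor expansion (Kloeden--Platen, Section~5.6, and \cite{MR4593213}), which is precisely the twice-iterated integral expansion with a priori moment bounds, Burkholder--Davis--Gundy/H\"older control of the strong remainder, and It\^o-form martingale cancellation for the weak remainder that you carry out. One small imprecision: $f_m'(Y^{n,y}(r))f_{\ell}(Y^{n,y}(r))-f_m'(y)f_{\ell}(y)$ is only $O((r-t_n)^{1/2})$ in $L^1$ (its martingale part dominates), whereas it is its \emph{expectation} that is $O(r-t_n)$ -- and since that is the bound actually used after taking expectations of the correction terms, the argument for \eqref{reminderexactweak} still goes through.
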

We refer for instance to~\cite[Section~5.6]{MR1214374} and~\cite{MR4593213} for a proof.

For all $n\in\{0,\ldots,N-1\}$ and all $y\in\mathcal{B}(0,R)$, define $Y^{n,y}_{n+1}$ as the solution to the
stochastic conformal exponential scheme~\eqref{confexp} at iteration $n+1$ with value $y$ at iteration $n$:
\begin{equation}\label{auxschemen}
\begin{aligned}
{e^{X_{n}^1}Y^{n,y}_{n+1}-e^{X_n^0}y}&=B\left(\frac{e^{X_n^0}y+e^{X_{n}^1}Y^{n,y}_{n+1}}{2}\right)\overline{\nabla}H_0(\e^{X_n^0}y,\e^{X_{n}^1}Y^{n,y}_{n+1})\dt \\
&+B\left(\frac{e^{X_n^0}y+e^{X_{n}^1}Y^{n,y}_{n+1}}{2}\right)
\sum_{m=1}^M\overline{\nabla}H_m(\e^{X_n^0}y,\e^{X_{n}^1}Y^{n,y}_{n+1})\widehat{\Delta W_{m,n}},
\end{aligned}
\end{equation}
where we recall that $X_n^0$ and $X_n^1$ are given by~\eqref{defX01}. Taking into account the equivalent formulation~\eqref{split} of the integrator~\eqref{confexp}, it is convenient to introduce the auxiliary random variables
\begin{align*}
\widehat{Z_{n}^{n,y}}=\exp\bigl(-\int_{t_n}^{t_{n+\frac12}}\gamma(s)\,\text ds\bigr)y\quad\text{and}\quad
Z_{n+1}^{n,y}=\exp\bigl(\int_{t_n+\frac12}^{t_{n+1}}\gamma(s)\,\text ds\bigr)Y_{n+1}^{n,y}.
\end{align*}
The random variable $Z_{n+1}^{n,y}$ is the unique solution of
\[
Z_{n+1}^{n,y}=\widehat{Z_{n}^{n,y}}+B\left(\frac{\widehat{Z_{n}^{n,y}}+Z_{n+1}^{n,y}}{2}\right)\left( \overline{\nabla}H_0(\widehat{Z_{n}^{n,y}},Z_{n+1}^{n,y})\dt+
\sum_{m=1}^M\overline{\nabla}H_m(\widehat{Z_{n}^{n,y}},Z_{n+1}^{n,y})\widehat{\Delta W_{m,n}}\right).
\]
Note that owing to Assumption~\ref{ass:prob} or Assumption~\ref{ass:prob1} one obtains the almost sure upper bounds
\[
\underset{N\in\N}\sup~\underset{n\in\{0,\ldots,N-1\}}\max~\|\widehat{Z_{n}^{n,y}}\|\le R' \quad\text{and}\quad \underset{N\in\N}\sup~\underset{n\in\{1,\ldots,N\}}\max~\|Z_{n+1}^{n,y}\|\le R',
\]
with $R'=R'(T,y_0)$ defined above. The numerical solution $Y_{n+1}^{n,y}$ has the following Stratonovich--Taylor expansion

\begin{lemma}\label{lem:expansionnum}
There exists $C(R')\in(0,\infty)$ such that, for all $N\ge N_0$ and all $n\in\{0,\ldots,N-1\}$, one has
\begin{align}\label{taylornum}
Y_{n+1}^{n,y}-y=\tau \bigl(f_0(y)-\gamma(t_n)y\bigr)+\sum_{m=1}^M f_m(y)\widehat{\Delta W_{m,n}}
+\frac12\sum_{m,\ell=1}^M f_m'(y)f_{\ell}(y)\widehat{\Delta W_{m,n}}\widehat{\Delta W_{\ell,n}}+R_n^{\text{num}}(y),
\end{align}
where the reminder term $R_n^{\text{num}}(y)$ satisfies
\begin{align}
\bigl(\E[ \| R_n^{\text{num}}(y) \|^2] \bigr)^{\frac12}&\leq C(R')\dt^{\frac32},\label{remindernumstrong}\\
\|\E[ R_n^{\text{num}}(y) ] \|&\leq C(R')\dt^{2},\label{remindernumweak}
\end{align}
for all $y\in\mathcal{B}(0,R)$ and for all $n\in\{0,\ldots,N-1\}$.
\end{lemma}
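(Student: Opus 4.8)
The plan is to derive the Stratonovich--Taylor expansion of the numerical solution $Y_{n+1}^{n,y}$ directly from the fixed-point equation defining it, treating the time-step $\tau$ and the (bounded) truncated increments $\widehat{\Delta W_{m,n}}$ as small parameters, exactly as one does for the exact solution in Lemma~\ref{lem:expansionexact}. Since we work on the ball $\mathcal{B}(0,R')$, on which $B$ is $\mathcal{C}^4$ and the $H_m$ are $\mathcal{C}^5$ with all relevant derivatives bounded, the vector fields $f_m = B\nabla H_m$ and their first derivatives $f_m' f_\ell$ are Lipschitz and bounded there. I would first record the rough a priori bound $\|Y_{n+1}^{n,y}-y\| \le C(R')(\tau + \sum_m |\widehat{\Delta W_{m,n}}|) =: C(R')\,\delta_n$, which follows by inserting the upper bounds on $B$ and the discrete gradients into the right-hand side of~\eqref{auxschemen} (or better, the $z$-formulation) and using that $\widehat{Z_n^{n,y}}$ and $Z_{n+1}^{n,y}$ stay in $\mathcal{B}(0,R')$; note $\bigl(\E[\delta_n^q]\bigr)^{1/q}\le C_q \tau^{1/2}$ for every $q\ge 1$ by the moment bounds on Wiener increments, and also $\delta_n \le C\tau^{1/2}|\log\tau|^{1/2}$ almost surely from the threshold bound $|\widehat{\Delta W_{m,n}}|\le \sqrt{\tau}A_{\tau,k}$.

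**Second**, I would perform a bootstrap Taylor expansion. Work with the equivalent formulation~\eqref{split}: $\widehat{Z_n}-y = (e^{-\int_{t_n}^{t_{n+1/2}}\gamma}-1)y = -\frac12\gamma(t_n)\tau\, y + O(\tau^2)$ using $\gamma\in\mathcal{C}^1$, and symmetrically $Y_{n+1}^{n,y}-Z_{n+1} = -\frac12\gamma(t_n)\tau\,Z_{n+1}+O(\tau^2)$; in particular $Z_{n+1}-\widehat{Z_n}$ and $Y_{n+1}^{n,y}-y$ differ only at order $O(\tau\,\delta_n)+O(\tau^2)$. Then expand the discrete gradients $\overline{\nabla}H_m(\widehat{Z_n},Z_{n+1}) = \nabla H_m(y) + \frac12\nabla^2 H_m(y)(Z_{n+1}-\widehat{Z_n} + O(\tau\delta_n)) + O(\delta_n^2)$ — here the midpoint structure of $\overline{\nabla}$ gives the factor $\frac12$ — and $B\bigl(\tfrac{\widehat{Z_n}+Z_{n+1}}2\bigr) = B(y) + DB(y)\cdot\tfrac12(Z_{n+1}-\widehat{Z_n}) + O(\delta_n^2)$. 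Substituting into the fixed-point identity and collecting terms: the leading term is $B(y)\bigl(\nabla H_0(y)\tau + \sum_m \nabla H_m(y)\widehat{\Delta W_{m,n}}\bigr) = f_0(y)\tau + \sum_m f_m(y)\widehat{\Delta W_{m,n}}$; the $-\gamma(t_n)\tau y$ term comes from the exponential prefactors. The second-order term is the iterated one: replacing $Z_{n+1}-\widehat{Z_n}$ inside the correction by its own leading order $\sum_\ell f_\ell(y)\widehat{\Delta W_{\ell,n}}$ (the $\tau$ part contributes only $O(\tau\delta_n)=O(\tau^{3/2})$ in mean-square), the two pieces $DB(y)[\tfrac12\sum_\ell f_\ell\widehat{\Delta W_{\ell,n}}]\sum_m\nabla H_m(y)\widehat{\Delta W_{m,n}}$ and $B(y)\sum_m\tfrac12\nabla^2 H_m(y)[\sum_\ell f_\ell\widehat{\Delta W_{\ell,n}}]\widehat{\Delta W_{m,n}}$ combine, by the product rule $f_m'(y)v = DB(y)[v]\nabla H_m(y) + B(y)\nabla^2H_m(y)v$, into exactly $\frac12\sum_{m,\ell}f_m'(y)f_\ell(y)\widehat{\Delta W_{m,n}}\widehat{\Delta W_{\ell,n}}$, as claimed in~\eqref{taylornum}.

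**Third**, I would bound the remainder $R_n^{\mathrm{num}}(y)$. The mean-square bound~\eqref{remindernumstrong} collects all the terms I dropped: they are each of the form (bounded smooth factor)$\times$(product of at least three of the quantities $\tau,\widehat{\Delta W_{1,n}},\dots$), so they are $O(\delta_n^3)$, plus the $O(\tau^2)$ and $O(\tau\delta_n)$ errors from the exponential prefactors and from $Z_{n+1}-\widehat{Z_n}$ vs $Y_{n+1}^{n,y}-y$; taking $L^2$ norms and using $\bigl(\E[\delta_n^6]\bigr)^{1/2}\le C\tau^{3/2}$, $\E[(\tau\delta_n)^2]^{1/2}\le C\tau^{3/2}$ gives $\le C(R')\tau^{3/2}$. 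For the weak bound~\eqref{remindernumweak} one must be more careful: $\|\E[R_n^{\mathrm{num}}(y)]\|\le C\tau^2$ requires that the $O(\delta_n^3)$ terms have expectation $O(\tau^2)$. Odd-order products of increments have zero expectation for the true increments by independence and symmetry, but for the \emph{truncated} increments this only holds approximately; here the estimates~\eqref{wtrunc} and~\eqref{wtrunc2} on $\widehat{\Delta W_{m,n}}$ (valid for $k\ge 2$, which is why the theorems using the weak bound require $k\ge 2$) control the discrepancy between moments of $\widehat{\Delta W_{m,n}}$ and those of $\Delta W_{m,n}$ by $O(\tau^{k+1})=O(\tau^3)$, so the symmetry argument goes through up to negligible error.

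**The main obstacle** I expect is the bootstrapping of the expansion from the implicit equation with full control of the orders: one must be scrupulous that every substitution of a lower-order expansion into a higher-order correction term produces an error that is genuinely higher order (in particular that the mixed $\tau\cdot\widehat{\Delta W}$ terms land at $O(\tau^{3/2})$ in $L^2$ and not at a worse order), and that the combinatorial identity reassembling $DB[\cdot]\nabla H + B\nabla^2 H$ into $f_m'f_\ell$ is applied correctly with the right $\frac12$ factor inherited from both the midpoint evaluation of $B$ and the midpoint structure of the discrete gradient. A secondary technical point is that the weak remainder bound genuinely needs the moment-matching properties of the truncated increments rather than the raw ones, so the proof of~\eqref{remindernumweak} cannot simply quote symmetry; it must invoke~\eqref{wtrunc}--\eqref{wtrunc2}. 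Both points are routine once set up carefully, and the structure parallels the treatment of the exact expansion in Lemma~\ref{lem:expansionexact}.
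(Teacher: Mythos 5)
Your proposal is correct and follows essentially the same route as the paper: both use the split formulation to isolate the two exponential damping factors (each contributing $-\tfrac12\gamma(t_n)\tau$ plus $O(\tau^2)$), both identify the first- and second-order terms of the implicit mid-step map in the variables $(\tau,\widehat{\Delta W_{1,n}},\ldots,\widehat{\Delta W_{M,n}})$, reassemble $DB(y)[f_\ell]\nabla H_m+B(y)\nabla^2H_m f_\ell$ into $f_m'f_\ell$ with the correct $\tfrac12$ factors from the midpoint evaluation of $B$ and the $\int_0^1\eta\,\diff\eta=\tfrac12$ structure of the discrete gradient, and both obtain the weak remainder bound from the vanishing of odd moments of the (symmetric) truncated increments. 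The only organizational difference is that the paper invokes the implicit function theorem to get a $\mathcal{C}^4$ solution map $\Phi(\zeta,\tau,w_1,\ldots,w_M)$ and writes a clean multivariate Taylor formula with coefficients evaluated at the deterministic base point and remainder $O(\tau^4+\sum_m w_m^4)$, whereas you bootstrap directly from the fixed-point equation; your version requires (as you note) the extra care of expanding all third-order terms with $\mathcal{F}_{t_n}$-measurable coefficients before taking expectations, since a raw $O(\delta_n^3)$ remainder only yields $O(\tau^{3/2})$ in expectation.
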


\begin{proof}
Owing to~\eqref{auxschemen}, one has the decomposition
\begin{equation}\label{decomp}
Y_{n+1}^{n,y}-y=\widehat{Z_{n}^{n,y}}-y+Z_{n+1}^{n,y}-\widehat{Z_{n}^{n,y}}+Y_{n+1}^{n,y}-Z_{n+1}^{n,y}.
\end{equation}

Let $\zeta=\widehat{Z_{n}^{n,y}}$, and recall that the auxiliary variable $Z^{n,\zeta}_{n+1}$ is the unique solution to the fixed point equation~\eqref{fixpoint}
\begin{align*}
z&=\zeta+B\left(\frac{\zeta+z}{2}\right)\left( \overline{\nabla}H_0(\zeta,z)\dt+\sum_{m=1}^M\overline{\nabla}H_m(\zeta,z)\widehat{\Delta W_{m,n}}\right)=\psi(z;\zeta,\tau,\widehat{\Delta W_{1,n}},\ldots,\widehat{\Delta W_{M,n}}).
\end{align*}
The main task in the proof is to obtain the following claim:
\begin{align}\label{claim}
Z_{n+1}^{n,y}-\zeta=\tau f_0(\zeta)+\sum_{m=1}^M f_m(\zeta)\widehat{\Delta W_{m,n}}
+\frac12\sum_{m,\ell=1}^M f_m'(\zeta)f_{\ell}(\zeta)\widehat{\Delta W_{m,n}}\widehat{\Delta W_{\ell,n}}+r_n^{\text{num}}(\zeta),
\end{align}
where the reminder term $r_n^{\text{num}}(\zeta)$ satisfies
\begin{align*}
\bigl(\E[ \| r_n^{\text{num}}(\zeta) \|^2] \bigr)^{\frac12}&\leq C(R')\dt^{\frac32},\\
\|\E[ r_n^{\text{num}}(\zeta) ] \|&\leq C(R')\dt^{2},
\end{align*}
for all $\zeta\in\mathcal{B}(0,R')$ and for all $n\in\{0,\ldots,N-1\}$.

For all $\zeta\in\mathcal{B}(0,R')$, all $\tau\in(0,\overline{\tau}_k(\gamma,T,y_0))$ and all $w_1,\ldots,w_m \in[-\sqrt{\tau}A_{\tau,k}, \sqrt{\tau}A_{\tau,k}]$, the unique solution of the fixed point equation
\[
z=\psi(z;\zeta,\tau,w_1,\ldots,w_M)
\]
can be written as
\[
z=\Phi(\zeta,\tau,w_1,\ldots,w_M).
\]
To simplify the notation, in the sequel the expression $\Phi(\zeta)\bigr\rvert_0$ means that the mapping $\Phi(\zeta)=\Phi(\zeta,\tau,w_1,\ldots,w_M)$ is evaluated with $\tau=0,w_1=0,\ldots,w_M=0$. The same notation is used for partial derivatives with respect to $\tau$ or to $w_1,\ldots,w_M$.

Owing to the local inversion theorem, the mapping $\Phi$ is of class $\mathcal{C}^4$, since the structure matrix $B$ is of class $\mathcal{C}^4$ and the Hamiltonian functions $H_0,H_1,\ldots,H_M$ are of class $\mathcal{C}^5$.

As a consequence, one has the following Taylor expansion of the mapping $\Phi$
\begin{align*}
\Phi(\zeta,\tau,w_1,\ldots,w_M)&=\Phi(\zeta)\bigr\rvert_0+\tau\partial_\tau\Phi(\zeta)\bigr\rvert_0+\sum_{m=1}^{M}w_m\partial_{w_m}\Phi(\zeta)\bigr\rvert_0
+\frac12\tau^2\partial_\tau^2\Phi(\zeta)\bigr\rvert_0\\
&+\tau\sum_{m=1}^{M}w_m\partial_\tau\partial_{w_m}\Phi(\zeta)\bigr\rvert_0
+\frac12\sum_{m_1,m_2=1}^{M}w_{m_1}w_{m_2}\partial_{w_{m_1}}\partial_{w_{m_2}}\Phi(\zeta)\bigr\rvert_0\\
&+\frac16\tau^3\partial_\tau^3\Phi(\zeta)\bigr\rvert_0+\frac12\tau^2\sum_{m=1}^{M}w_m\partial_\tau^2\partial_{w_m}\Phi(\zeta)\bigr\rvert_0\\
&+\frac12\tau\sum_{m_1,m_2=1}^{M}w_{m_1}w_{m_2}\partial_\tau\partial_{w_{m_1}}\partial_{w_{m_2}}\Phi(\zeta)\bigr\rvert_0
\\
&+\frac16\sum_{m_1,m_2,m_3=1}^{M}w_{m_1}w_{m_2}w_{m_3}\partial_{w_{m_1}}\partial_{w_{m_2}}\partial_{w_{m_3}}\Phi(\zeta)\bigr\rvert_0\\
&+\mathcal{R}(\zeta,\tau,w_1,\ldots,w_M),
\end{align*}
where there exists $C\in(0,\infty)$ such that for all $\zeta\in\mathcal{B}(0,R')$, all $\tau\in(0,\overline{\tau}_k(\gamma,T,y_0))$ and all $w_1,\ldots,w_m \in[-\sqrt{\tau}A_{\tau,k}, \sqrt{\tau}A_{\tau,k}]$, one has
\[
|\mathcal{R}(\zeta,\tau,w_1,\ldots,w_M)|\le C\bigl(\tau^4+\sum_{m=1}^{M}w_m^4\bigr).
\]

For the proof of the claim~\eqref{claim}, it is sufficient to compute the values of the function $\Phi$
\[
\Phi(\zeta)\bigr\rvert_0,
\]
of the first order derivatives
\[\partial_\tau\Phi(\zeta)\bigr\rvert_0, \partial_{w_m}\Phi(\zeta)\bigr\rvert_0,
\]
and of the second order derivatives
\[
\partial_{w_{m_1}}\partial_{w_{m_2}}\Phi(\zeta)\bigr\rvert_0.
\]
Indeed, all the other terms in the Taylor expansion of the mapping $\Phi$ are either of higher order than necessary or have expectation equal to $0$
when evaluated at $w_1=\widehat{\Delta W_{1,n}},\ldots,w_M=\widehat{\Delta W_{M,n}}$. Let us now compute these terms.

First, by definition of the fixed point equation, one has $\Phi(\zeta)\bigr\rvert_0=\Phi(\zeta,0,0,\ldots,0)=\zeta$.

To compute the first order derivative, using the definition of the mapping $\Phi$, writing
\[
\Phi(\zeta)=\zeta+B\left(\frac{\zeta+\Phi(\zeta)}{2}\right)\left( \overline{\nabla}H_0(\zeta,\Phi(\zeta))\dt+\sum_{m=1}^M\overline{\nabla}H_m(\zeta,\Phi(\zeta))w_m\right)
\]
and applying the first order derivative operator $\partial_\tau$ yield
\begin{align*}
\partial_\tau \Phi(\zeta)&=B\left(\frac{\zeta+\Phi(\zeta)}{2}\right)\overline{\nabla}H_0(\zeta,\Phi(\zeta))
+\frac{\tau}{2}B'\left(\frac{\zeta+\Phi(\zeta)}{2}\right)\partial_\tau \Phi(\zeta) \overline{\nabla}H_0(\zeta,\Phi(\zeta))\\
&+\tau B\left(\frac{\zeta+\Phi(\zeta)}{2}\right)\partial_{z_2}\overline{\nabla}H_0(\zeta,\Phi(\zeta))\partial_\tau \Phi(\zeta)
+\sum_{m=1}^{M}\frac{w_m}{2}B'\left(\frac{\zeta+\Phi(\zeta)}{2}\right)\partial_\tau \Phi(\zeta) \overline{\nabla}H_m(\zeta,\Phi(\zeta))\\
&+\sum_{m=1}^{M} w_m B\left(\frac{\zeta+\Phi(\zeta)}{2}\right)\partial_{z_2}\overline{\nabla}H_m(\zeta,\Phi(\zeta))\partial_\tau \Phi(\zeta).
\end{align*}
Evaluating the above equation at $\tau=0,w_1=0,\ldots,w_M=0$ and using the identity $\Phi(\zeta)\bigr\rvert_0=\zeta$ gives
\[
\partial_\tau \Phi(\zeta)\bigr\rvert_0=B(\zeta)\nabla H_0(\zeta).
\]
Similarly, for the other first order derivative of $\Phi$, one obtains
\begin{align*}
\partial_{w_m}\Phi(\zeta)&=\frac{\tau}{2}B'\left(\frac{\zeta+\Phi(\zeta)}{2}\right)\partial_{w_m} \Phi(\zeta) \overline{\nabla}H_0(\zeta,\Phi(\zeta))
+\tau B\left(\frac{\zeta+\Phi(\zeta)}{2}\right)\partial_{z_2}\overline{\nabla}H_0(\zeta,\Phi(\zeta))\partial_{w_m} \Phi(\zeta)\\
&+B\left(\frac{\zeta+\Phi(\zeta)}{2}\right)\overline{\nabla}H_m(\zeta,\Phi(\zeta))+\sum_{\ell=1}^{M}\frac{w_\ell}{2}B'\left(\frac{\zeta+\Phi(\zeta)}{2}\right)\partial_{w_m} \Phi(\zeta) \overline{\nabla}H_\ell(\zeta,\Phi(\zeta))\\
&+\sum_{\ell=1}^{M} w_\ell B\left(\frac{\zeta+\Phi(\zeta)}{2}\right)\partial_{z_2}\overline{\nabla}H_\ell(\zeta,\Phi(\zeta))\partial_{w_m} \Phi(\zeta).
\end{align*}
Evaluating the above equation at $\tau=0,w_1=0,\ldots,w_M=0$ and using the identity $\Phi(\zeta)\bigr\rvert_0=\zeta$ gives
\[
\partial_{w_m} \Phi(\zeta)\bigr\rvert_0=B(\zeta)\nabla H_m(\zeta).
\]
Let us now compute the second order derivatives $\partial_{w_{m_1}}\partial_{w_{m_2}}\Phi(\zeta)\bigr\rvert_0$. Since there are no other derivatives that need to be computed, it is not restrictive to eliminate all the terms that vanish when $\tau=0,w_1=0,\ldots,w_M=0$. Using the formula above for $\partial_{w_m}\Phi(\zeta)$, one obtains
\begin{align*}
\partial_{w_{m_1}}\partial_{w_{m_2}}\Phi(\zeta)\bigr\rvert_0&=\frac12 B'\left(\frac{\zeta+\Phi(\zeta)\bigr\rvert_0}{2}\right)\partial_{w_{m_1}}\Phi(\zeta)\bigr\rvert_0\overline{\nabla}H_{m_2}(\zeta,\Phi(\zeta)\bigr\rvert_0)\\
&+B\left(\frac{\zeta+\Phi(\zeta)\bigr\rvert_0}{2}\right)\partial_{z_2}\overline{\nabla}H_{m_2}(\zeta,\Phi(\zeta)\bigr\rvert_0)\partial_{w_{m_1}}\Phi(\zeta)\bigr\rvert_0\\
&+\frac12B'\left(\frac{\zeta+\Phi(\zeta)\bigr\rvert_0}{2}\right)\partial_{w_{m_2}}\Phi(\zeta)\bigr\rvert_0\overline{\nabla}H_{m_1}(\zeta,\Phi(\zeta)\bigr\rvert_0)\\
&+B\left(\frac{\zeta+\Phi(\zeta)\bigr\rvert_0}{2}\right)\partial_{z_2}\overline{\nabla}H_{m_1}(\zeta,\Phi(\zeta)\bigr\rvert_0)\partial_{w_{m_2}}\Phi(\zeta)\bigr\rvert_0\\
&=\frac12 B'(\zeta)\Bigl(B(\zeta)\nabla H_{m_1}(\zeta)\Bigr)\nabla H_{m_2}(\zeta)\\
&+\frac12 B(\zeta)\nabla^2 H_{m_2}(\zeta)\Bigl(B(\zeta)\nabla H_{m_1}(\zeta)\Bigr)\\
&+\frac12 B'(\zeta)\Bigl(B(\zeta)\nabla H_{m_2}(\zeta)\Bigr)\nabla H_{m_1}(\zeta)\\
&+\frac12 B(\zeta)\nabla^2 H_{m_1}(\zeta)\Bigl(B(\zeta)\nabla H_{m_2}(\zeta)\Bigr).
\end{align*}
In the computations above, the following property is used:
\[
\partial_{z_2}\overline{\nabla}H(z_1,z_2)=\int_0^1\eta\nabla^2 H\bigl((1-\eta)z_1+\eta z_2\bigr)\,\text d\eta
\]
and thus
\[
\partial_{z_2}\overline{\nabla}H(z,z)=\int_0^1\eta \,\text d\eta \nabla^2 H(z)=\frac12 \nabla^2 H(z).
\]
Observe that, using the notation $f_m(z)=B(z)\nabla H_m(z)$, the second order derivatives can be rewritten as
\[
\partial_{w_{m_1}}\partial_{w_{m_2}}\Phi(\zeta)\bigr\rvert_0=\frac12f_{m_2}'(\zeta)f_{m_1}(\zeta)+\frac12f_{m_1}'(\zeta)f_{m_2}(\zeta).
\]
In particular, when $m_1=m_2=m$, one obtains the expression
\[
\partial_{w_{m}}^2\Phi(\zeta)\bigr\rvert_0=f_{m}'(\zeta)f_{m}(\zeta).
\]

As a consequence of all the above computations, one obtains
\begin{align*}
Z^{n,y}_{n+1}&=\zeta+\tau f_0(\zeta)+\sum_{m=1}^{M}f_m(\zeta)\widehat{\Delta W_{m,n}}\\
&+\frac12 \sum_{m_1,m_2=1}^{M}\frac12\Bigl(f_{m_2}'(\zeta)f_{m_1}(\zeta)+f_{m_1}'(\zeta)f_{m_2}(\zeta)\Bigr)
\widehat{\Delta W_{m_1,n}}\widehat{\Delta W_{m_2,n}}+r_{n}^{\rm num}\\
&=\zeta+\tau f_0(\zeta)+\sum_{m=1}^{M}f_m(\zeta)\widehat{\Delta W_{m,n}}+\frac12 \sum_{m_1,m_2=1}^{M}f_{m_2}'(\zeta)f_{m_1}(\zeta)\widehat{\Delta W_{m_1,n}}\widehat{\Delta W_{m_2,n}}+r_n^{\rm num},
\end{align*}
where the reminder term $r_n^{\rm num}$ satisfies $\bigl(\E[\|r_n^{\rm num}\|^2]\bigr)^{\frac12}\le C\tau^{\frac32}$ and $\|\E[r_n^{\rm num}]\|\le C\tau^2$.

This concludes the proof of the claim~\eqref{claim}. Recalling the decomposition~\eqref{decomp}, one first has
\begin{equation}\label{err1}
\widehat{Z_{n}^{n,y}}-y=-\frac{\tau\gamma(t_n)}{2}y+r_n^{(1)},
\end{equation}
where one has
\[
|r_n^{(1)}(y)|\le C(R')\tau^2.
\]
Combining~\eqref{err1} and the claim~\eqref{claim} (with $\zeta=\widehat{Z_{n}^{n,y}}$), one thus obtains
\begin{align*}
Z^{n,y}_{n+1}-y&=Z^{n,y}_{n+1}-\widehat{Z_{n}^{n,y}}+\widehat{Z_{n}^{n,y}}-y\\
&=-\frac{\tau \gamma(t_n)}{2}y+\tau f_0(y)+\sum_{m=1}^M f_m(y)\widehat{\Delta W_{m,n}}
+\frac12\sum_{m,\ell=1}^M f_m'(y)f_{\ell}(y)\widehat{\Delta W_{m,n}}\widehat{\Delta W_{\ell,n}}+r_n^{(2)}(y),
\end{align*}
where one has
\begin{align*}
\bigl(\E[ \| r_n^{(2)}(y) \|^2] \bigr)^{1/2}&\leq C(R')\dt^{\frac32},\\
\|\E[ r_n^{(2)}(y) ] \|&\leq C(R')\dt^{2}.
\end{align*}
Finally, one has
\[
Y_{n+1}^{n,y}-Z_{n+1}^{n,y}=-\frac{\tau\gamma(t_n)}{2}Z_{n+1}^{n,y}+r_n^{(3)}(y)
\]
where the reminder verifies
\[
|r_n^{(3)}(y)|\le C(R')\tau^2.
\]
Using the expansion above and the decomposition~\eqref{decomp}, one obtains
\begin{align*}
Y_{n+1}^{n,y}-y&=Y_{n+1}^{n,y}-Z_{n+1}^{n,y}+Z_{n+1}^{n,y}-y\\
&=-\tau \gamma(t_n) y+\tau f_0(y)+\sum_{m=1}^M f_m(y)\widehat{\Delta W_{m,n}}
+\frac12\sum_{m,\ell=1}^M f_m'(y)f_{\ell}(y)\widehat{\Delta W_{m,n}}\widehat{\Delta W_{\ell,n}}+r_n^{(4)}(y),
\end{align*}
where one has
\begin{align*}
\bigl(\E[ \| r_n^{(4)}(y) \|^2] \bigr)^{\frac12}&\leq C(R')\dt^{\frac32},\\
\|\E[ r_n^{(4)}(y) ] \|&\leq C(R')\dt^{2}.
\end{align*}
Taking into account the identity $-\gamma(t_n)y+f_0(y)=g_0(t_n,y)$ then concludes the proof of~\eqref{taylornum}.
\end{proof}



%

\subsection{Proofs of the convergence results}



The strong convergence results are straightforward consequences of the fundamental theorem for mean-square convergence, see for instance~\cite[Theorem~1.1]{MR4369963}, combined with the Stratonovich--Taylor expansions given in Lemmas~\ref{lem:expansionexact} and~\ref{lem:expansionnum}. In all cases, owing to Assumptions~\ref{ass:prob} or~\ref{ass:prob1}, the exact and numerical solutions take values in a ball $\mathcal{B}(0,R)$ of radius $R=R(T,y_0)$ (depending on the time $T$ and on the initial value $y_0$), and as explained above the structure matrix $B$ and the Hamiltonian functions $H_0,H_1,\ldots,H_M$, and their derivatives, can be assumed to be bounded.

Before proceeding with the proof, let us compare the terms appearing at second order in the Stratonovich--Taylor expansions~\eqref{taylorexact} (with $t=t_{n+1}$) and~\eqref{taylornum}. On the one hand, observe that when $m=\ell$, for the exact solution one has
\[
f_m'(y)f_{m}(y)\int_{t_n}^{t_{n+1}}\int_{t_n}^s\circ\diff W_{m}(r)\circ\diff W_m(s)=\frac12 f_m'(y)f_m(y)\Delta W_{m,n}^2,
\]
whereas for the numerical solution one has
\[
\frac12f_m'(y)f_{m}(y)\widehat{\Delta W_{m,n}}\widehat{\Delta W_{m,n}}.
\]
Up to the error due to the truncation of the noise, which is given by~\eqref{wtrunc}, the two terms above thus match. On the other hand, when $m\neq \ell$, for the exact solution one has
\[
f_m'(y)f_{\ell}(y)\int_{t_n}^{t_{n+1}}\int_{t_n}^s\circ\diff W_{\ell}(r)\circ\diff W_m(s)\neq \frac12 f_m'(y)f_{\ell}(y){\Delta W_{m,n}}{\Delta W_{\ell,n}}
\]
whereas for the numerical solution one has
\[
\frac12f_m'(y)f_{\ell}(y)\widehat{\Delta W_{m,n}}\widehat{\Delta W_{\ell,n}}.
\]
As a consequence, the two terms do not match, even when the trunction of the noise is removed. This explains why the general case $M\in\N$ and the case $M=1$ are treated separately and different orders of convergence $1/2$ and $1$ are obtained.

\begin{proof}[Proof of Theorem~\ref{th:strong}]
Owing to the discussion above, in general the terms obtained when $m\neq \ell$ in the Stratonovich--Taylor expansions~\eqref{taylorexact} and~\eqref{taylornum} do not match and need to be taken into account in reminder terms.

Recall that the truncated Wiener increments $\widehat{\Delta W_{m,n}}$ are defined by~\eqref{eq:What} with $k\ge 1$. Owing to~\eqref{wtrunc}, the Stratonovich--Taylor expansion~\eqref{taylornum} of the numerical solution given in Lemma~\ref{lem:expansionnum} yields
\[
Y_{n+1}^{n,y}-y=\tau g_0(t_n,y)+\sum_{m=1}^M f_m(y){\Delta W_{m,n}}
+\frac12\sum_{m=1}^M f_m'(y)f_{m}(y){\Delta W_{m,n}}{\Delta W_{m,n}}+\rho_n^{\text{num}}(y)
\]
where the reminder term $\rho_n^{\text{num}}(y)$ satisfies
\[
\bigl(\E[ \| \rho_n^{\text{num}}(y) \|^2] \bigr)^{\frac12}\leq C\dt \quad \text{and} \quad
\|\E[ \rho_n^{\text{num}}(y) ] \|\leq C\dt^{\frac32}.
\]
Similarly, the Stratonovich--Taylor expansion~\eqref{taylorexact} of the exact solution (with $t=t_{n+1}$) given in Lemma~\ref{lem:expansionexact} yields
\[
Y^{n,y}(t_{n+1})-y=\tau g_0(t_n,y)+\sum_{m=1}^M f_m(y){\Delta W_{m,n}}
+\frac12\sum_{m=1}^M f_m'(y)f_{m}(y){\Delta W_{m,n}}{\Delta W_{m,n}}+\rho_n^{\text{ex}}(y)
\]
where the reminder term $\rho_n^{\text{ex}}(y)$ satisfies
\[
\bigl(\E[ \| \rho_n^{\text{ex}}(y) \|^2] \bigr)^{\frac12}\leq C\dt \quad \text{and} \quad
\|\E[ \rho_n^{\text{ex}}(y) ] \|\leq C\dt^{\frac32}.
\]
The application of the fundamental theorem for mean-square convergence~\cite[Theorem~1.1]{MR4369963} then yields the strong error estimates with mean-square order of convergence $1/2$ stated in Theorem~\ref{th:strong}.
\end{proof}

\begin{proof}[Proof of Theorem~\ref{th:strongM1}]
Assuming that $M=1$, the discussion above implies that one only needs to deal with the error due to the truncation of the Wiener increments, defined by~\eqref{eq:What}. Using the inequality~\eqref{wtrunc} with $k\ge 2$, 
the Stratonovich--Taylor expansion~\eqref{taylornum} of the numerical solution given in Lemma~\ref{lem:expansionnum} yields
\[
Y_{n+1}^{n,y}-y=\tau g_0(t_n,y)+f_1(y){\Delta W_{1,n}}
+\frac12 f_1'(y)f_{1}(y){\Delta W_{1,n}}{\Delta W_{1,n}}+\rho_n^{\text{num}}(y)
\]
where the reminder term $\rho_n^{\text{num}}(y)$ satisfies
\[
\bigl(\E[ \| \rho_n^{\text{num}}(y) \|^2] \bigr)^{\frac12}\leq C\dt^{\frac32} \quad \text{and} \quad
\|\E[ \rho_n^{\text{num}}(y) ] \|\leq C\dt^{2}.
\]
Similarly, the Stratonovich--Taylor expansion~\eqref{taylorexact} of the exact solution (with $t=t_{n+1}$) given in Lemma~\ref{lem:expansionexact} yields
\[
Y^{n,y}(t_{n+1})-y=\tau g_0(t_n,y)+f_1(y){\Delta W_{1,n}}
+\frac12 f_1'(y)f_{1}(y){\Delta W_{1,n}}{\Delta W_{1,n}}+\rho_n^{\text{ex}}(y)
\]
where the reminder term $\rho_n^{\text{ex}}(y)$ satisfies
\[
\bigl(\E[ \| \rho_n^{\text{ex}}(y) \|^2] \bigr)^{\frac12}\leq C\dt^{\frac32} \quad \text{and} \quad
\|\E[ \rho_n^{\text{ex}}(y) ] \|\leq C\dt^{2}.
\]
The application of the fundamental theorem for mean-square convergence~\cite[Theorem~1.1]{MR4369963} then yields the strong error estimates with mean-square order of convergence $1$ stated in Theorem~\ref{th:strongM1}.
\end{proof}

\begin{proof}[Proof of Theorem~\ref{th:strong1}]
It suffices to apply Theorem~\ref{th:strongM1} with $H_0=H_1=H$.
\end{proof}

Let us now turn to the proof of Theorem~\ref{th:weak} on weak error estimates. Notice that if $\phi:\R^d\to\R$ is a mapping of class $\mathcal{C}^4$, with bounded derivatives of order $1$ to $4$, then applying the Stratonovich--Taylor expansions~\eqref{taylorexact} (with $t=t_{n+1}$) and~\eqref{taylornum} one obtains for all $n\in\{0,\ldots,N-1\}$ the weak Taylor expansions of the exact and numerical solutions (i.\,e. Taylor expansions for the expected value of a functional $\phi(\cdot)$ applied to the exact and numerical solutions)
\begin{align*}
\E[\phi(Y^{n,y}(t_{n+1})]&=\phi(y)+\dt\nabla \phi(y)\cdot(f_0(y)-\gamma(t_n)y)\\
&+\sum_{m=1}^{M}\dt\nabla \phi(y)\cdot(f_m'(y) f_m(y))+\frac{1}{2}\sum_{m=1}^{M}\dt\nabla^2\phi(y)\cdot\bigl(f_m(y),f_m(y)\bigr)\\
&+\epsilon_{n}^{\text{ex}}(y),\\
\E[\phi(Y_{n+1}^{n,y})]&=\phi(y)+\dt\nabla \phi(y)\cdot(f_0(y)-\gamma(t_n)y)\\
&+\sum_{m=1}^{M}\E[\widehat{\Delta W_{m,n}}^2]\nabla \phi(y)\cdot(f_m'(y) f_m(y))+\frac{1}{2}\sum_{m=1}^{M}\E[\widehat{\Delta W_{m,n}}^2]\nabla^2\phi(y)\cdot\bigl(f_m(y),f_m(y)\bigr)\\
&+\epsilon_{n}^{\text{num}}(y),
\end{align*}
where the reminder terms $\epsilon_{n}^{\text{ex}}(y)$ and $\epsilon_{n}^{\text{num}}(y)$ satisfy
\[
\underset{0\le n\le N-1}\sup~\left(\epsilon_{n}^{\text{ex}}(y)+\epsilon_{n}^{\text{num}}(y)\right)\le C(\phi)\dt^2,
\]
with
\[
C(\phi)\le C\sum_{j=1}^{4}\underset{y\in\R^d}\sup~\|\nabla^j \phi(y)\|.
\]
for some $C\in(0,\infty)$.

Using the inequality~\eqref{wtrunc2} on the second moment of the truncated Wiener increments, one then obtains
\[
\big|\E[\phi(Y^{n,y}(t_{n+1})]-\E[\phi(Y_{n+1}^{n,y})]\big|\le C(\phi)(1+A_{\tau,k})\dt^{k+1}.
\]
The weak Taylor expansions of the exact and numerical solutions above are not used directly in the proof of Theorem~\ref{th:weak} below. Instead, an approach based on the expression and the decomposition of the weak error using the solution of an associated Kolmogorov equation is employed.

\begin{proof}[Proof of Theorem~\ref{th:weak}]
Let $T=t_N\in(0,\infty)$, define for all $t\in[0,T]$ and all $y\in \R^d$
\[
u^T(t,y)=\E[\varphi(y(T))|y(t)=y]=\E[\varphi(Y^{t,y}(T))],
\]
where $\bigl(Y^{t,y}(s)\bigr)_{s\ge t}$ is the solution to the It\^o SDE
\[
\left\lbrace
\begin{aligned}
\diff Y^{t,y}(s)&= g_0(t,Y^{t,y}(s)) \diff s+\sum_{m=1}^M  f_m(Y^{t,y}(s))\diff W_m(s),\quad \forall~s\ge t,\\
Y^{t,y}(t)&=y.
\end{aligned}
\right.
\]
The notation for the drift and diffusion coefficients is the same as in the SDE~\eqref{auxSDE}, however the initial value is imposed at time $t$ instead of time $0$. Proceeding as in the proof of Proposition~\ref{prop:Casimir}, if $y\in\mathcal{B}(0,R)$, where $R=R(T,y_0)$,
then one has $Y^{t,y}(s)\in\mathcal{B}(0,R')$ with $R'=R(T,y_0)$ given above.

The mapping $u^T$ is the solution to the backward Kolmogorov equation
\begin{equation}\label{kolmogo}
\left\lbrace
\begin{aligned}
&\frac{\partial}{\partial t}u^T(t,y)=\nabla_yu^T(t,y).g_0(t,y)+\frac12\sum_{m=1}^M\nabla_y^2u^T(t,y).(f_m(y),f_m(y)),\forall~(t,y)\in[0,T]\times\R^d\\
&u^T(T,y)=\varphi(y),\forall~y\in\R^d.
\end{aligned}
\right.
\end{equation}
If the function $\varphi$ is of class $C^4$, one can prove the following upper bounds on the derivatives of order $1$ to $4$ of $u^T(t,y)$ with respect to $y$: there exists $C_{R',T}(\phi)\inf(0,\infty)$ such that one has
\[
\sum_{j=1}^{4}\underset{t\in[0,T]}\sup~\underset{y\in\mathcal B(0,R)}\sup~\| \nabla^j u^T(t,y)\|\leq C_{R',T}(\varphi).
\]
Using the backward Kolmogorov equation~\eqref{kolmogo} one then obtains the following upper bounds on the first and second order derivatives of $u^T(t,y)$ with respect to $t$: one has
\[
\sum_{j=1}^{2}\underset{t\in[0,T]}\sup~\underset{y\in\mathcal B(0,R)}\sup~\| \frac{\partial^j}{\partial t^j} u^T(t,y)\|\leq C_{R',T}(\varphi).
\]
The definition of the mapping $u^T$ yields the following expression for the weak error:
\[
\E[\varphi(y_N)]-\E[\varphi(y(t_N))]=\E[u^T(t_N,y_N)]-\E[u^T(0,y_0)].
\]
Applying the standard telescoping sum argument then gives
\[
\E[\varphi(y_N)]-\E[\varphi(y(t_N))]=\sum_{n=0}^{N-1}\bigl(\E[u^T(t_{n+1},y_{n+1})]-\E[u^T(t_n,y_n)]\bigr).
\]
For all $n\in\{0,\ldots,N-1\}$, the local weak error term $\E[u^T(t_{n+1},y_{n+1})]-\E[u^T(t_n,y_n)]$ is decomposed into two contributions:
\begin{align}\label{locweak}
\E[u^T(t_{n+1},y_{n+1})]-\E[u^T(t_n,y_n)]&=\E[u^T(t_{n+1},y_n)]-\E[u^T(t_{n},y_n)]\nonumber\\
&+\E[u^T(t_{n+1},y_{n+1})]-\E[u^T(t_{n+1},y_n)].
\end{align}
For the first local weak error term appearing in~\eqref{locweak}, a second order Taylor expansion of $t\mapsto u^T(t,y_n)$ provides
\[
\E[u^T(t_{n+1},y_n)]-\E[u^T(t_{n},y_n)]=\dt\E[\frac{\partial}{\partial t}u^T(t_{n+1},y_n)]+\delta_n^1,
\]
where the reminder term satisfies
\[
|\delta_n^1|\leq C_{R',T}(\varphi)\dt^2,
\]
owing to the bounds on the first and second order temporal derivatives of $u^T$ stated above.

For the second local weak error term appearing in~\eqref{locweak}, using the weak Taylor expansion of the numerical solution with $y=y_n$ and $\phi=u^T(t_{n+1},\cdot)$ and the error bound~\eqref{wtrunc2} on the truncated Wiener increments, one obtains
\begin{align*}
\E[u^T(t_{n+1},y_{n+1})]-\E[u^T(t_{n+1},y_n)]&=\dt \E[\nabla_y u^T(t_{n+1},y_{n})].g_0(t_n,y_n)\\
&+\frac1{2}\dt\sum_{m=1}^M \E[\nabla_y^2u^T(t_{n+1},y_{n}).(f_m(y_n),f_m(y_n))]+\delta_n^2,
\end{align*}
where the reminder term satisfies
\[
|\delta_n^2|\leq C_{R',T}(\varphi)\left((1+A_{\tau,k})\dt^{k+1}+\dt^2 \right),
\]
owing to the bounds on the spatial derivatives of order $1$ to $4$ of $u^T$ stated above.

Choosing $k=2$ then gives the upper bound
\[
|\delta_n^2|\leq C_{R',T}(\varphi)\dt^2,
\]

Taking into account that $u^T$ is solution to the backward Kolmogorov equation~\eqref{kolmogo}, from the decomposition~\eqref{locweak} the local error term is thus given by
\[
\E[u^T(t_{n+1},y_{n+1})]-\E[u^T(t_n,y_n)]=\delta_n^1+\delta_n^2,
\]
and as a result one obtains
\begin{align*}
|\E[\varphi(y(t_N))]-\E[\varphi(y_N)]|&=|\E[u^T(0,y_0)]-\E[u^T(t_N,y_N)]|\\
&\leq \sum_{n=0}^{N-1}(|r_n^1|+|r_n^2|)\\
&\leq C_{R',T}(\varphi)\dt.
\end{align*}
This concludes the proof of Theorem~\ref{th:weak}.
\end{proof}

\section{Numerical experiments}\label{sec-numexp}
This section presents several numerical experiments in order to numerically confirm the above theoretical results
and in order to compare the stochastic conformal exponential integrator~\eqref{confexp} with two classical
numerical schemes for SDEs. When applied to the linearly damped stochastic Poisson system~\eqref{prob}, these two
numerical schemes are
\begin{itemize}
\item the Euler--Maruyama scheme (applied to the converted It\^o SDE with drift denoted by $\underline{B}(y_{n})\nabla\underline{H_0}(y_{n})$)
$$
y_{n+1}=y_n+\tau\Bigl(\underline{B}(y_{n})\nabla \underline{H_0}(y_{n})-\gamma(t_n)y_n\Bigr)+\sum_{m=1}^M B(y_{n})\nabla H_m(y_{n})\widehat{\Delta W_{m,n}}
$$
\item the stochastic midpoint scheme, see for instance \cite{MR1214374,MR4369963,MR3308418},
\begin{align*}
y_{n+1}&=y_n+\tau\Bigl(B(\frac{y_{n}+y_{n+1}}2)\nabla H_0(\frac{y_{n}+y_{n+1}}2)-\gamma(\frac{t_n+t_{n+1}}2)
\frac{y_{n}+y_{n+1}}2\Bigr)\\
&\quad+\sum_{m=1}^M B(\frac{y_{n}+y_{n+1}}2)\nabla H_m(\frac{y_{n}+y_{n+1}}2)\widehat{\Delta W_{m,n}}.
\end{align*}
\end{itemize}
These schemes will be denoted by {\sc Sexp}, {\sc EM}, and {\sc Midpoint} in all figures.
Note that the integral in the discrete gradient in the stochastic conformal exponential integrator~\eqref{confexp} can be computed
exactly for polynomial Hamiltonians for instance, else we use Matlab's \emph{integral} quadrature formula.

\subsection{A linearly damped stochastic mathematical pendulum}
As a first example, we consider the linearly damped stochastic mathematical pendulum from the introduction, see Example~\ref{exppend}:
\begin{equation}\label{pendul}
\diff
\begin{pmatrix}
y_1(t)\\
y_2(t)
\end{pmatrix}
=
\begin{pmatrix}
-\sin(y_2(t))\\
y_1(t)
\end{pmatrix}
\left(\diff t+c\circ\diff W(t)\right)-\gamma(t)y(t)\diff t,
\end{equation}
with $c=1$ and $\gamma(t)=2$.

For this problem we will only illustrate the strong rate of convergence of the stochastic conformal exponential integrator~\eqref{confexp}
as well as of the above described classical numerical integrators for SDEs. Note that the SDE~\eqref{pendul} has globally Lipschitz coefficients.
In the SDE~\eqref{pendul}, we set the end time $T=1$ and take the initial value $y(0)=(0.2,1)$.
The numerical schemes are applied with the time-step sizes $\dt=2^{-5},\ldots,2^{-12}$.
The reference solution is given by the stochastic conformal exponential integrator~\eqref{confexp}
with reference time-step size $\dt_{\text{ref}}=2^{-14}$. The expectation are approximated using $M_s=500$ independent Monte Carlo samples.
We have verified that this is enough for the Monte Carlo error
to be negligible. The strong rates of convergence of these time integrators are illustrated in Figure~\ref{fig:pendul}.

\begin{figure}[h]
\centering
\includegraphics[width=0.4\textwidth]{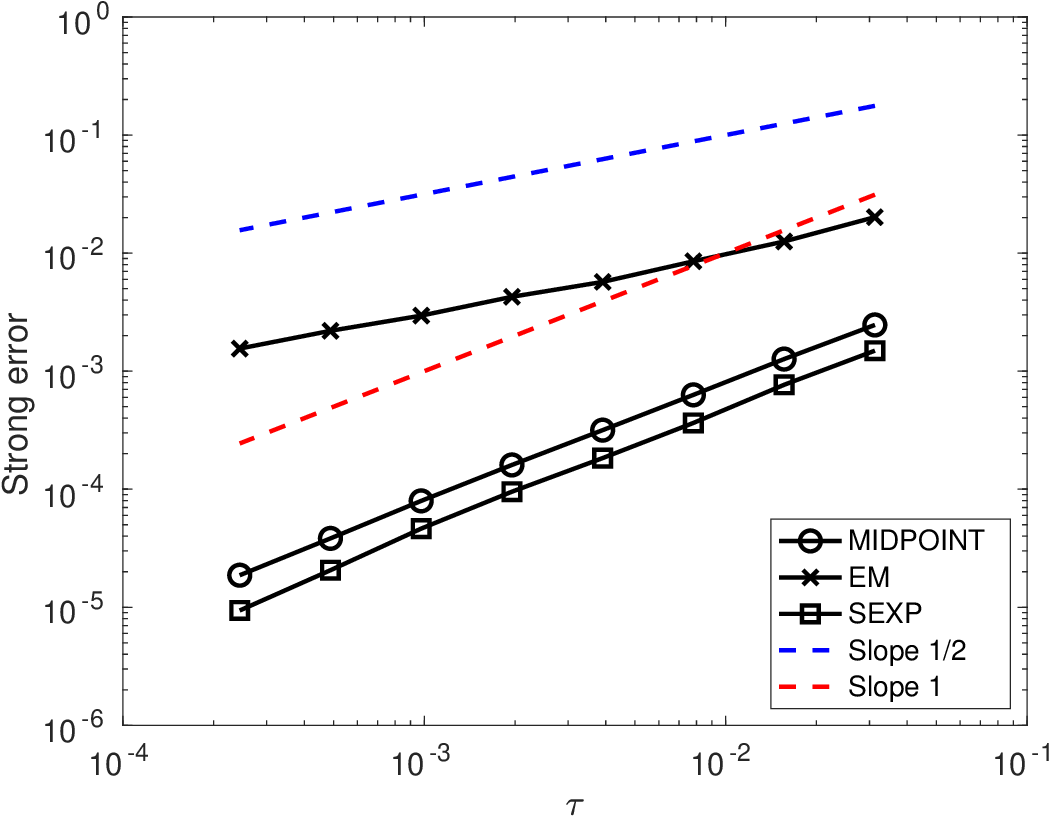}
\caption{Linearly damped stochastic mathematical pendulum~\eqref{pendul}: Strong convergence.}
\label{fig:pendul}
\end{figure}

The proposed exponential integrator has a strong order of convergence $1$, as stated in Theorem~\ref{th:strong1}.
The strong order of convergence of the Euler--Maruyama scheme is seen to be $1/2$, this is in accordance with the results from the literature
in this standard setting, see for instance \cite{MR1214374}. The stochastic midpoint scheme is known to have strong order $1$, see for instance \cite[Theorem~2.6]{MR1951908},
and this is the rate that is observed in the figure. We do not display plots for the weak errors since, in the present setting,
it is clear that the weak rates of convergence are $1$ for these numerical methods.

\subsection{Linearly damped free rigid body with random inertia tensor}
In this numerical experiment, we consider the free rigid body with random inertia tensor from \cite{MR4593213}, see also Example~\ref{exprigid}.
The damping function in the SDE~\eqref{prob} is $\gamma(t)=\frac12\cos(2t)$.
The considered linearly damped stochastic rigid body system thus reads
\begin{align}\label{eq:srb}
\diff\begin{pmatrix}y_1\\y_2\\y_3\end{pmatrix}=
B(y)
\left(\nabla H_0(y)\diff t+\nabla H_1(y)\circ\diff W_1(t)
+\nabla  H_2(y)\circ\diff W_2(t)+\nabla  H_3(y)\circ\diff W_3(t) \right)-\gamma(t)y\diff t,
\end{align}
where $y=(y_1,y_2,y_3)$, the skew-symmetric matrix
$$
B(y)=\begin{pmatrix}0 & -y_3 & y_2\\ y_3 & 0 & -y_1\\-y_2 & y_1 & 0\end{pmatrix},
$$
the Hamiltonian function
$$
H_0(y)=\frac12\left(\frac{y_1^2}{I_1}+\frac{y_2^2}{I_2}+\frac{y_3^2}{I_3}\right),
$$
and the diffusion functions
$H_k(y)=\frac{y_k^2}{{2}\widehat I_k}$, for $k=1,2,3$. Here, $I_k,\widehat I_k$, for $k=1,2,3$ are
positive and pairwise distinct real numbers called moment of inertia.
Observe that the linearly damped stochastic Poisson system~\eqref{eq:srb} has the conformal quadratic Casimir $C(y)=\frac12\left( y_1^2+y_2^2+y_3^2 \right)$.

In Figure~\ref{fig:trajSRB}, we display the quadratic Casimir $C(y)$ along the numerical solutions given by the Euler--Maruyama scheme, the stochastic midpoint scheme,
and the stochastic conformal exponential integrator~\eqref{confexp}. We use the following parameters:
the initial value reads $y(0)=(\cos(1.1),0,\sin(1.1))$, the moment of inertia are
$I_1=2, I_2=1, I_3=2/3$ and $\hat I_1=1, \hat I_2=2, \hat I_3=3$, the end time is $T=10$, the time-step size is $\dt=0.1$.
The preservation of this conformal quadratic Casimir by the stochastic exponential integrator~\eqref{confexp},
proved  in Proposition~\ref{prop:confCasimir}, is numerically illustrated in this figure.

\begin{figure}[h]
  \centering
  \includegraphics[width=0.4\textwidth]{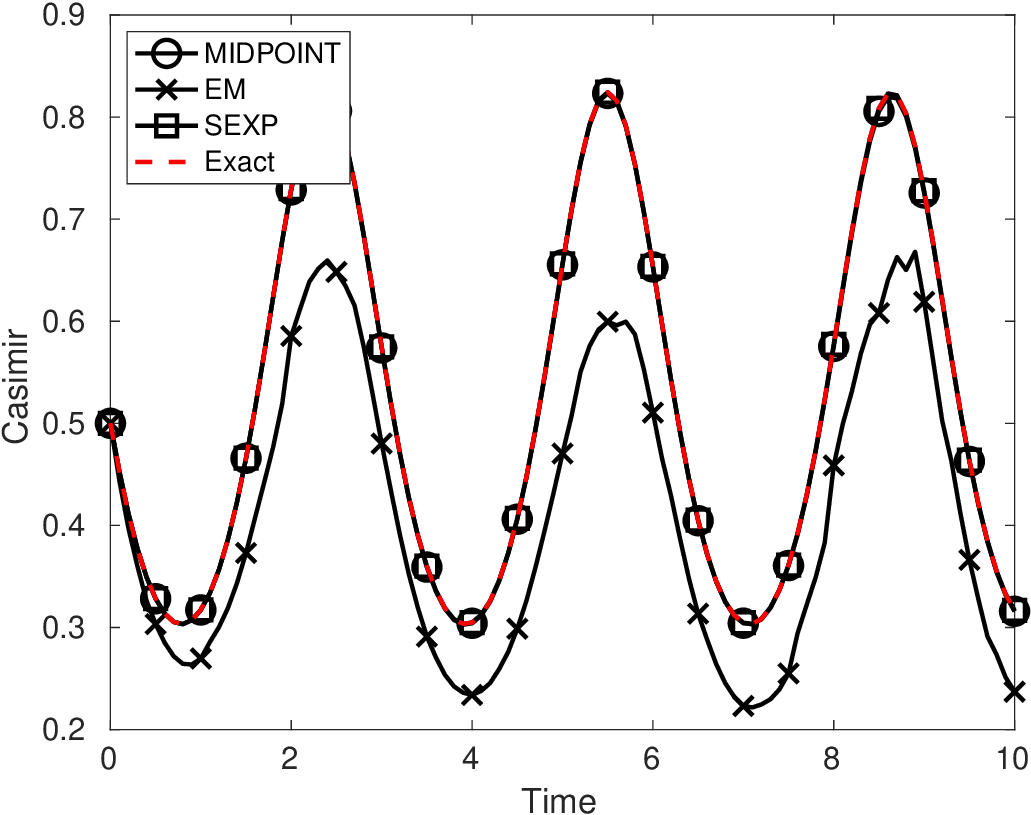}
  \caption{Linearly damped stochastic rigid body system~\eqref{eq:srb}: Evolution of the quadratic Casimir $C(y)=\frac12\left( y_1^2+y_2^2+y_3^2 \right)$.}
\label{fig:trajSRB}
\end{figure}

The strong convergence of the stochastic conformal exponential integrator~\eqref{confexp} is illustrated in Figure~\ref{fig:msSRB}.
To produce this figure, we use the same parameters as in the previous numerical experiment, except $T=1$. The numerical schemes are
applied with the range of time-step sizes $\dt=2^{-5},\ldots,2^{-14}$. The reference solution is given by stochastic conformal exponential integrator
with $\dt_{\text{ref}}=2^{-16}$. The expectations are approximated using $M_s=1000$ independent Monte Carlo samples. We have verified that this is enough for the Monte Carlo error
to be negligible. In this figure, one can observe the strong order of convergence $1$ in case of one noise (only $W_1$ in the SDE~\eqref{eq:srb}),
resp. order $1/2$ in case of several noises. This illustrates the results of Theorems~\ref{th:strong}~and~\ref{th:strongM1}.

\begin{figure}[h]
\centering
\begin{subfigure}{.4\textwidth}
  \centering
  \includegraphics[width=\textwidth]{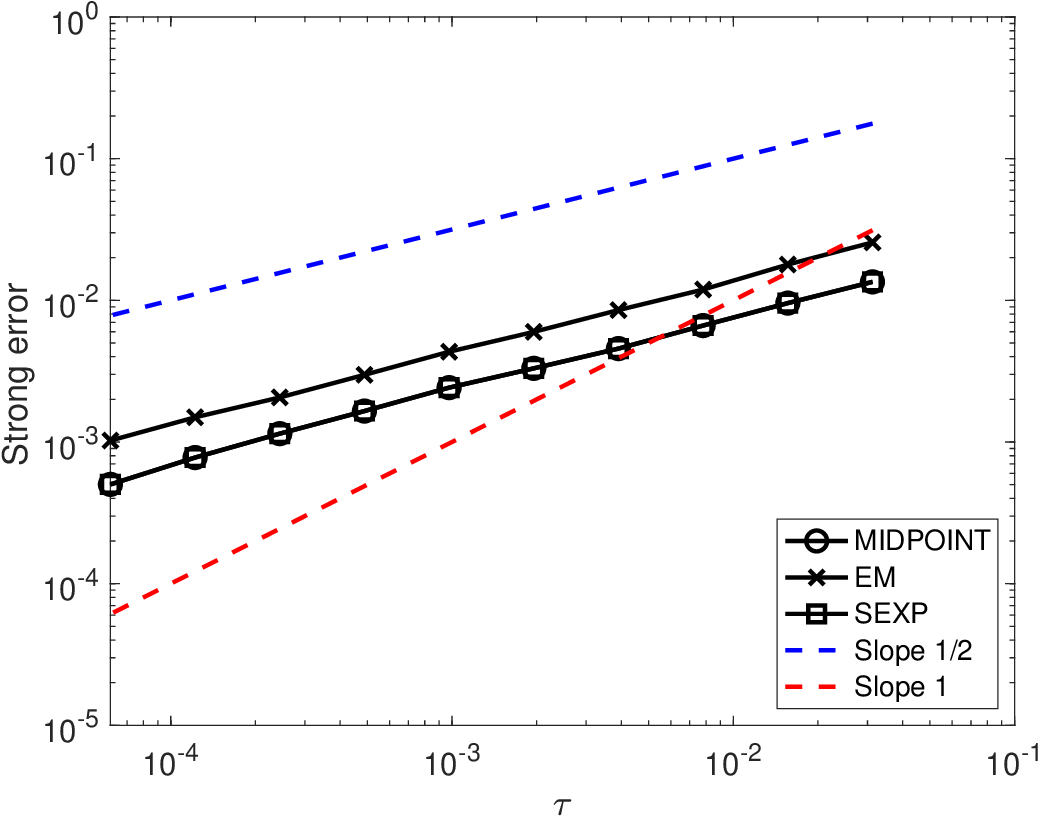}
  \caption{$3$ noises}
\end{subfigure}%
\begin{subfigure}{.4\textwidth}
  \centering
  \includegraphics[width=\textwidth]{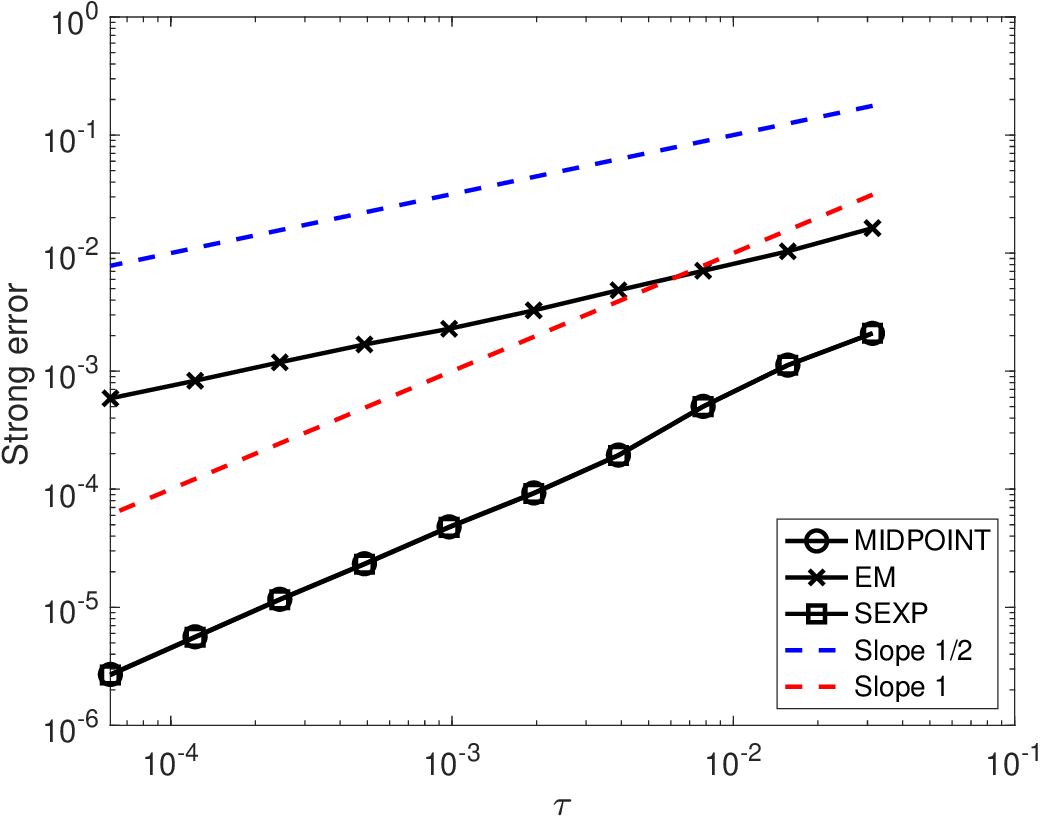}
  \caption{One noise (only $W_1$ in the SDE~\eqref{eq:srb})}
\end{subfigure}%

\caption{Linearly damped stochastic rigid body system~\eqref{eq:srb}: Strong convergence of the numerical schemes.}
\label{fig:msSRB}
\end{figure}

%
%

The weak convergence of the stochastic conformal exponential integrator~\eqref{confexp} stated in Theorem~\ref{th:weak}
is illustrated in Figure~\ref{fig:wSRB}. We use the initial value $y(0)=(\cos(1.1),0,\sin(1.1))$,
the final time $T=1$ and the moments of inertia as above. The stochastic exponential integrator is applied with the range of
time-step sizes $\dt=2^{-6},\ldots,2^{-12}$. The reference solutions are computed using this numerical scheme with $\dt_{\text{ref}}=2^{-12}$.
The expectation are approximated using $M_s=500000$ independent Monte Carlo samples. In this figure, one can observe
the weak order of convergence $1$ for the errors in several test functions. This illustrates the results of Theorem~\ref{th:weak}.

\begin{figure}[h]
\centering
\begin{subfigure}{.4\textwidth}
  \centering
  \includegraphics[width=\textwidth]{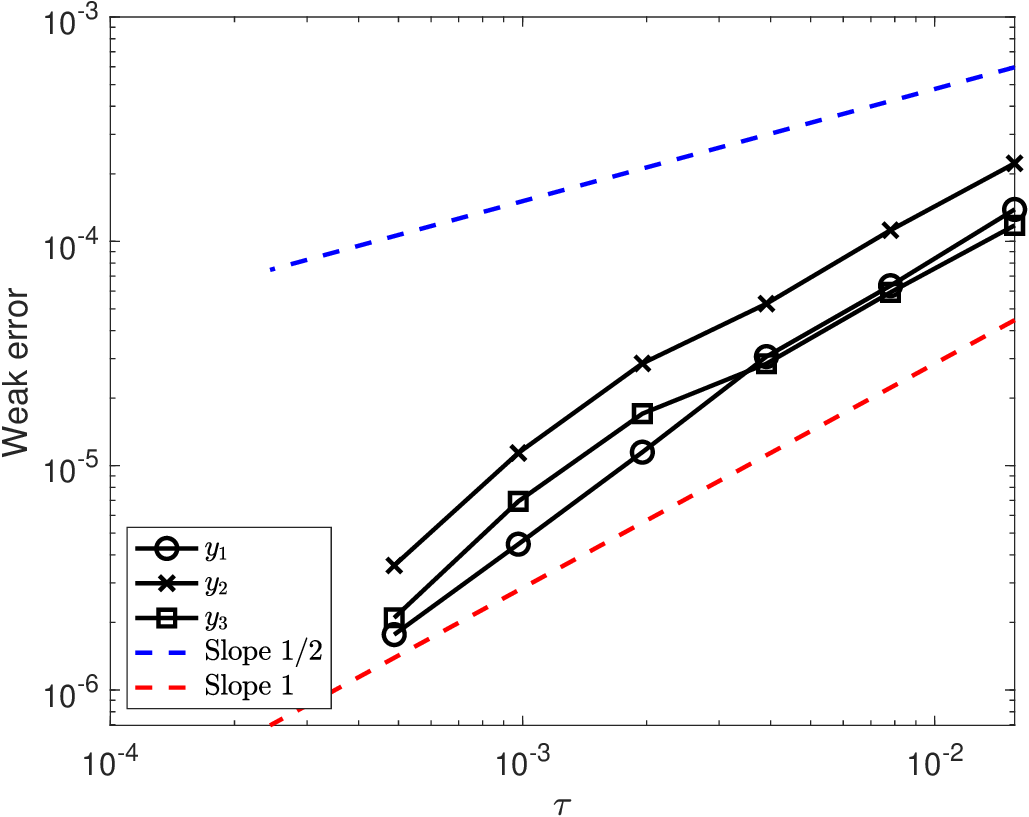}
  \caption{Weak error for $\mathbb E[y_j]$ for $j=1,2,3$}
\end{subfigure}%
\begin{subfigure}{.4\textwidth}
  \centering
  \includegraphics[width=\textwidth]{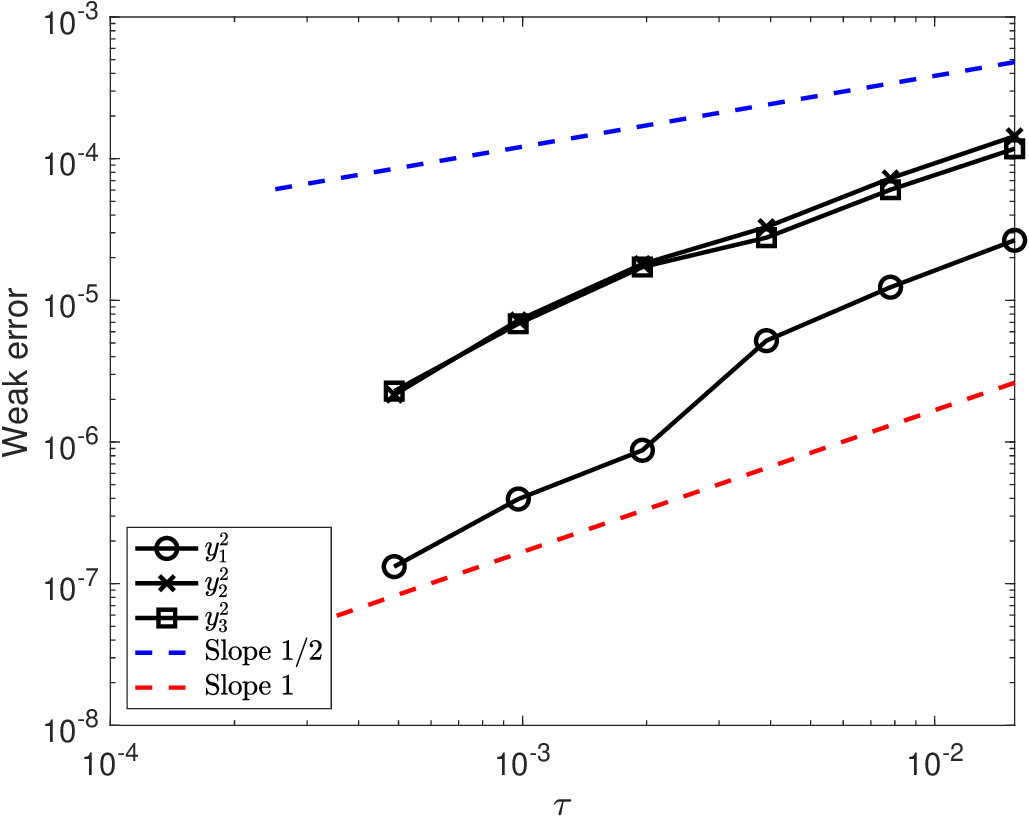}
  \caption{Weak error for $\mathbb E[y_j^2]$ for $j=1,2,3$}
\end{subfigure}\hfill
\begin{subfigure}{.4\textwidth}
  \centering
  \includegraphics[width=\textwidth]{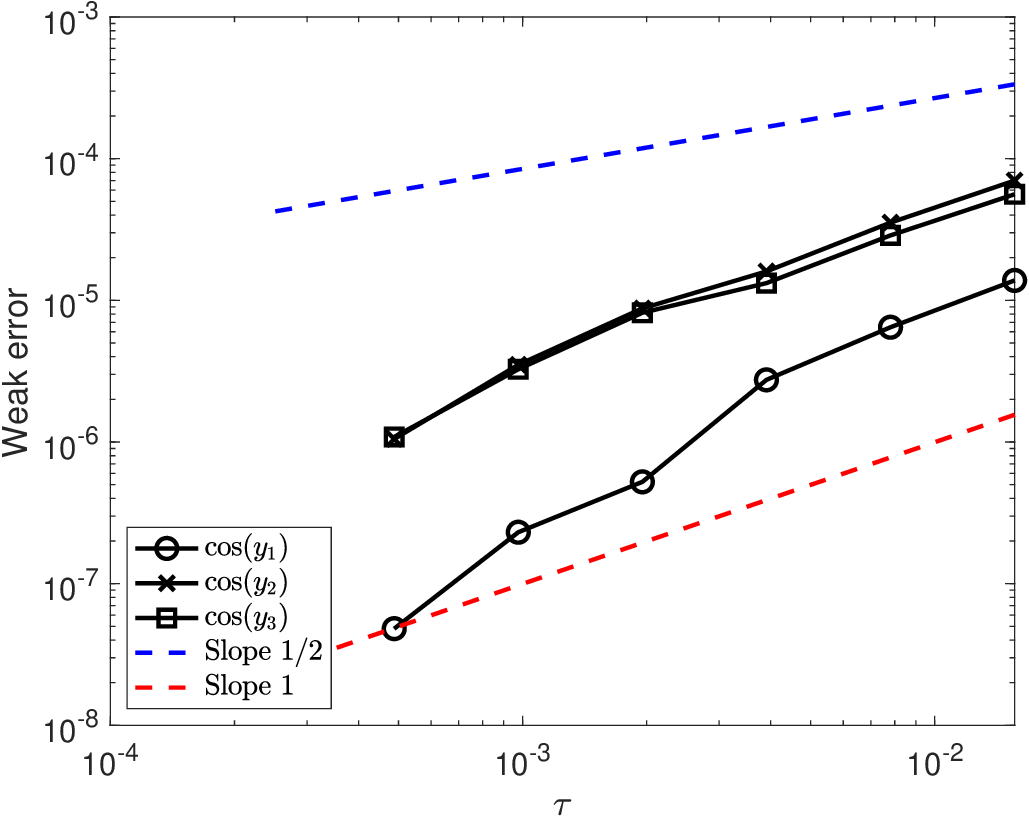}
  \caption{Weak error for $\mathbb E[\cos(y_j)]$ for $j=1,2,3$}
\end{subfigure}%
\begin{subfigure}{.4\textwidth}
  \centering
  \includegraphics[width=\textwidth]{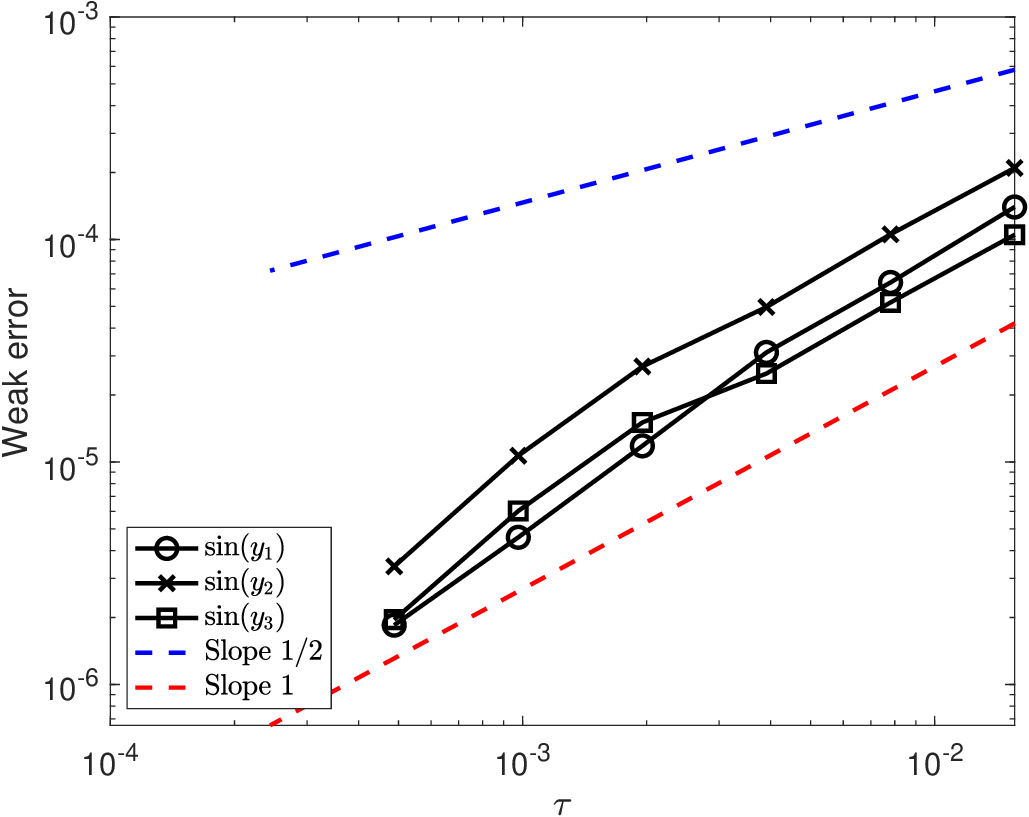}
  \caption{Weak error for $\mathbb E[\sin(y_j)]$ for $j=1,2,3$}
\end{subfigure}%

\caption{Linearly damped stochastic rigid body system~\eqref{eq:srb} (with $3$ noises): Weak convergence of the stochastic conformal exponential integrator.}
\label{fig:wSRB}
\end{figure}

We finally illustrate the property of energy balance of the stochastic conformal exponential integrator given in Proposition~\ref{prop:balanceNum}.
The results are presented in Figure~\ref{fig:trajSRBh}. This is done for the linearly damped SDE
\begin{equation}\label{eq:srbH}
\diff y(t)=B(y(t))\nabla H_0(y(t))\left( \diff t+\circ \diff W(t) \right)-\gamma(t)y(t)\diff t
\end{equation}
with $B$ and $H_0$ from the system~\eqref{eq:srb}. We use the same parameters as for in the first numerical experiment
but take the damping function to be $\gamma(t)=\sin(t)$. The correct preservation of the energy balance
by the stochastic conformal exponential integrator~\eqref{confexp} is numerically illustrated in this figure.

\begin{figure}[h]
  \centering
  \includegraphics[width=0.4\textwidth]{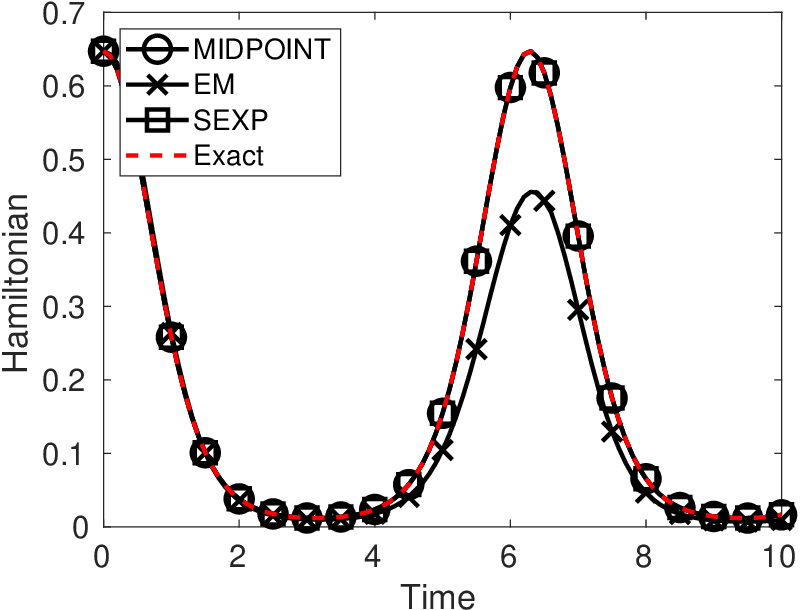}
  \caption{Linearly damped stochastic rigid body system~\eqref{eq:srbH}: Evolution of the quadratic Hamiltonian
  $H_0(y)=\frac12\left( \frac{y_1^2}{I_1}+\frac{y_2^2}{I_2}+\frac{y_3^2}{I_3} \right)$.}
\label{fig:trajSRBh}
\end{figure}

Let us remark that, in all the above numerical experiments, the results of the stochastic conformal exponential integrator~\eqref{confexp}
and of the stochastic midpoint scheme are very similar. This is due to the fact that the Hamiltonians in the linearly damped stochastic rigid body systems are quadratic
(hence the result of the discrete gradient is a midpoint rule) and to the fact that the terms $e^{X^0_{n}}$ and $e^{X^1_{n}}$ are close to the identity. These two time integrators are however not the same, especially when considering non-quadratic Hamiltonians as in the next numerical
experiment.

\subsection{Linearly damped stochastic Lotka--Volterra system}
We consider the following linearly damped stochastic Lotka--Volterra system from Example~\ref{explotka}:
\begin{align}\label{eq:slv}
\diff\begin{pmatrix}y_1\\y_2\\y_3\end{pmatrix}=
B(y)\left(\nabla H(y)\diff t+\nabla H(y)\circ\diff W(t) \right)-\gamma(t)y(t)\diff t,
\end{align}
where $y=(y_1,y_2,y_3)$, the skew-symmetric matrix
$$
B(y)=y_1^{1-ab}y_2^{b+1}\begin{pmatrix}0 & ac & c\\ -ac & 0 & -d\\-c & d & 0\end{pmatrix},
$$
and the Hamiltonian function reads
$$
H(y)=y_1^{ab}y_2^{-b}y_3.
$$
Here, $a,b,c$ are real numbers and $d=abc$. Deterministic Lotka--Volterra systems are considered in, e.g., \cite{MR1044958,MR2784654,MR4242161},
stochastic versions in, e.g., \cite{MR3210739,MR4432606}.

We take the parameters $a=-1,b=-2,c=1$, $\gamma(t)=\sin(t)$, and
the initial value $y(0)=(0.2,0.4,0.6)$. The correct preservation of the energy balance by the stochastic conformal exponential integrator~\eqref{confexp}
is numerically illustrated in Figure~\ref{fig:trajLVh}. This numerically confirms the results of Proposition~\ref{prop:balanceNum} for a non-quadratic Hamiltonian.

\begin{figure}[h]
  \centering
  \includegraphics[width=0.4\textwidth]{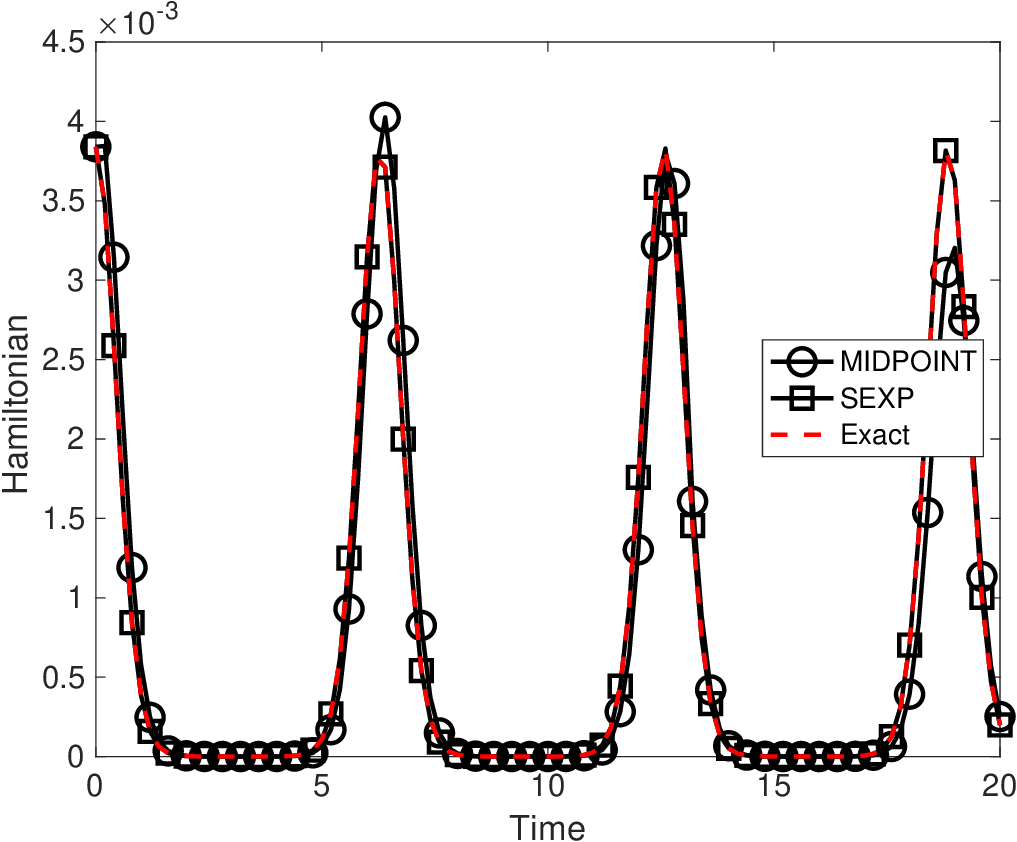}
  \caption{Linearly damped stochastic Lotka--Volterra system~\eqref{eq:slv}: Evolution of the Hamiltonian
  $H(y)=y_1^{ab}y_2^{-b}y_3$.}
\label{fig:trajLVh}
\end{figure}

The strong rate of convergence $1$ of the stochastic conformal exponential integrator can be observed in Figure~\ref{fig:LV}.
To produce this figure, we use $a=-1,b=-1,c=1$ and the same parameters as above. The numerical schemes are applied with the range of
time-step sizes $\dt=2^{-5},\ldots,2^{-12}$. The reference solutions are computed using the conformal exponential integrator
with $\dt_{\text{ref}}=2^{-12}$. The expectations are approximated using $M_s=500$ independent Monte Carlo samples. Observe that the theoretical results from the
previous section cannot be applied here since ${\bf m}(H)=0$.

\begin{figure}[h]
\centering
\includegraphics[width=.4\textwidth]{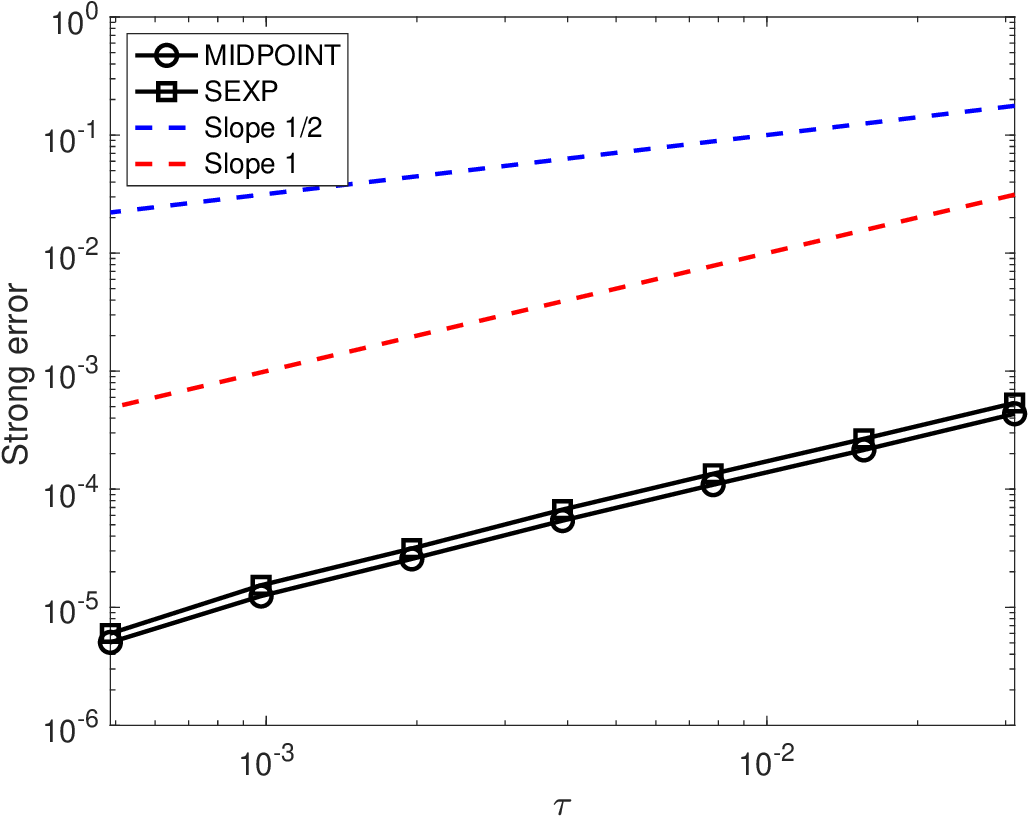}
\caption{Linearly damped stochastic Lotka--Volterra system~\eqref{eq:slv}: Strong convergence of the numerical schemes.}
\label{fig:LV}
\end{figure}

%

\subsection{A linearly damped version of the stochastic Maxwell--Bloch system}
We consider a damped version of the stochastic Maxwell--Bloch system from laser-matter dynamics, see~\cite{MR4593213} and Example~\ref{expmax}:
\begin{align}\label{eq:smb}
\diff\begin{pmatrix}y_1\\y_2\\y_3\end{pmatrix}=
B(y)\left(\nabla H_0(y)\diff t+\nabla H_1(y)\circ\diff W(t) \right)-\gamma(t)y(t)\diff t,
\end{align}
where $y=(y_1,y_2,y_3)$, the skew-symmetric matrix
$$
B(y)=\begin{pmatrix}0 & -y_3 & y_2\\ y_3 & 0 & 0\\-y_2 & 0 & 0\end{pmatrix},
$$
and the Hamiltonian functions read
$$
H_0(y)=\frac12y_1^{2}+y_3\:\text{and}\:H_1(y)=y_3.
$$
This system has the conformal quadratic Casimir $C(y)=\frac12\left(y_2^{2}+y_3^{2}\right)$.
Figure~\ref{fig:trajSMaxBloc} illustrates the preservation of the conformal Casimir quadratic by the stochastic exponential integrator~\eqref{confexp}
as stated in Proposition~\ref{prop:confCasimir}. We use the following parameters: $y(0)=(1,2,3)$, the final time $T=5$, the time-step size $\dt=1/100$
and the damping coefficient $\gamma(t)=\cos(2t)/2$.

\begin{figure}[h]
  \centering
  \includegraphics[width=0.4\textwidth]{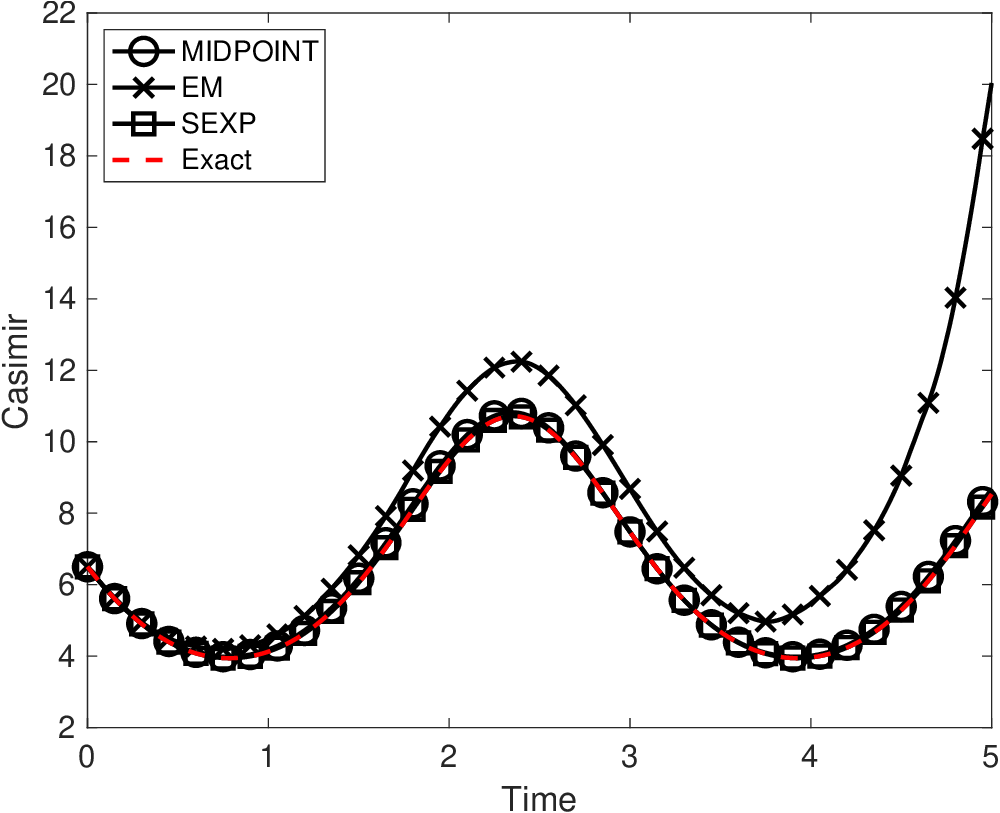}
  \caption{Linearly damped stochastic Maxwell--Bloch system~\eqref{eq:smb}: Evolution of the quadratic Casimir
  $C(y)=\frac12\left(y_2^{2}+y_3^{2}\right)$.}
\label{fig:trajSMaxBloc}
\end{figure}

\subsection{A damped version of the stochastic Poisson system in \cite{MR4396382}}
We consider a damped version of the stochastic Poisson system in \cite{MR4396382}, see Example~\ref{expcao}:
\begin{align}\label{eq:Cao}
\diff\begin{pmatrix}y_1\\y_2\\y_3\end{pmatrix}=
B(y)
\left(\nabla H(y)\diff t+c\nabla H(y)\circ\diff W(t) \right)-\gamma(t)y\diff t,
\end{align}
where $y=(y_1,y_2,y_3)$, the skew-symmetric constant matrix
$$
B(y)=\begin{pmatrix}0 & 1 & -1\\ -1 & 0 & 1\\1 & -1 & 0\end{pmatrix},
$$
and the Hamiltonian function
$$
H(y)=\sin(y_1)+\sin(y_2)+\sin(y_3).
$$
Let us first observe that the SDE~\eqref{eq:Cao} has the quadratic Casimir $C(y)=\frac12y^TDy$ with
the matrix
$$
D=\begin{pmatrix}1 & 1 & 1\\ 1 & 1 & 1\\1 & 1 & 1\end{pmatrix}.
$$
We are thus in the setting of Proposition~\ref{prop:Casimir}, resp. Proposition~\eqref{prop:confCasimir}.
Let us apply the stochastic conformal exponential integrator~\eqref{confexp} with the following parameters:
$T=2$, $y_0=(1,2,3)$, $\gamma(t)=t$, $c=1$, and $\dt=0.02$. The evolution of the quadratic Casimir
along the numerical solution of the proposed integrator can be seen in Figure~\ref{fig:trajCao}. The result is in agreement
with Proposition~\ref{prop:confCasimir}. Note also the slightly less favorable behaviour of the classical Euler--Maruyama scheme
and the good performance of the stochastic midpoint scheme, see below for a discussion on this performance.

\begin{figure}[h]
  \centering
  \includegraphics[width=0.4\textwidth]{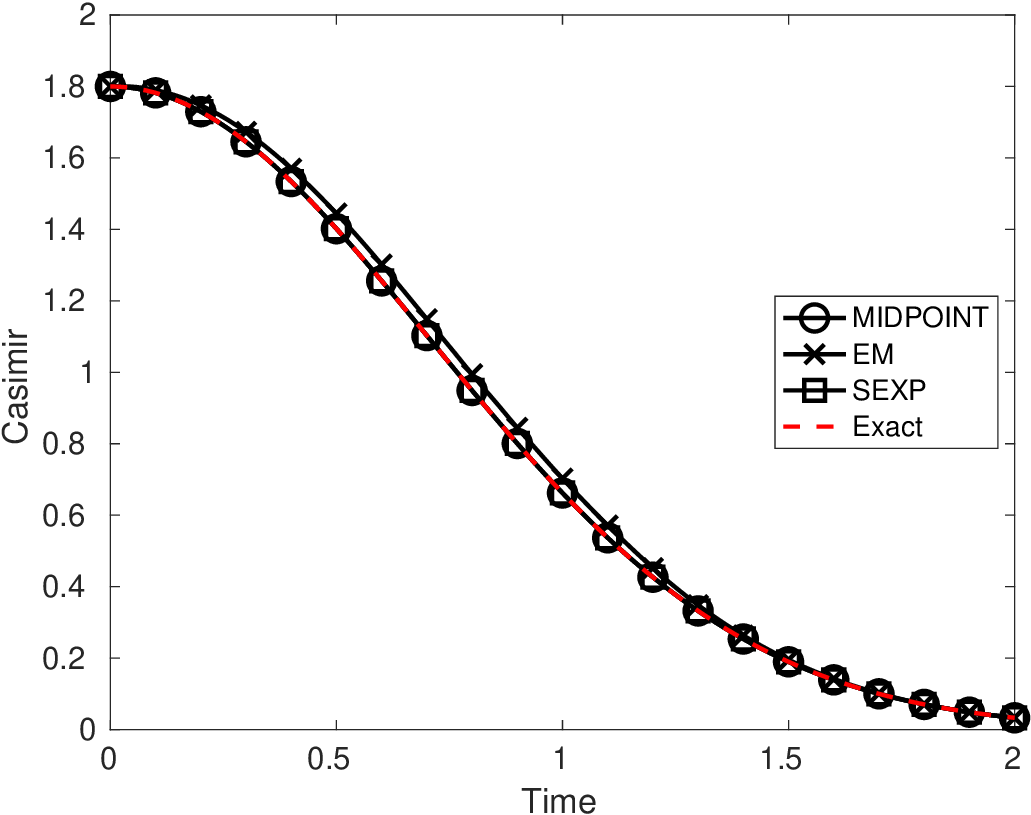}
  \caption{Linearly damped stochastic Poisson system~\eqref{eq:Cao}: Evolution of the quadratic Casimir $C(y)=\frac12y^TDy$.}
\label{fig:trajCao}
\end{figure}

The goal of the next numerical experiment is to illustrate the strong rate of convergence of the stochastic conformal exponential integrator
as stated in Theorem~\ref{th:strong1}. Note that the SDE~\eqref{eq:Cao} has globally Lipschitz coefficients.
The strong rate of convergence is illustrated in Figure~\ref{fig:Cao}.
We use the following parameters: $T=0.5$, $y_0=(1,2,3)$, $c=1$, and the damping function $\gamma(t)=\frac12\cos(2t)$.
The numerical schemes are applied with the time-step sizes $\dt=2^{-5},\ldots,2^{-12}$.
The reference solution is computed by stochastic conformal exponential integrator~\eqref{confexp}
with $\dt_{\text{ref}}=2^{-14}$. The expectations are approximated using $M_s=400$ independent Monte Carlo samples. We have verified that this is enough for the Monte Carlo error
to be negligible. A strong order of convergence $1$ is observed for the proposed exponential integrator, the same order as the stochastic midpoint scheme
(see for instance \cite[Theorem~2.6]{MR1951908}). The strong order of convergence of the Euler--Maruyama scheme is observed to be $1/2$. This is what is expected
in this standard setting of globally Lipschitz coefficients, see for example \cite{MR1214374}. We do not display plots for the weak errors since, in the present setting,
it is clear that the weak rates of convergence are $1$ for these numerical methods.

\begin{figure}[h]
\centering
\includegraphics[width=0.4\textwidth]{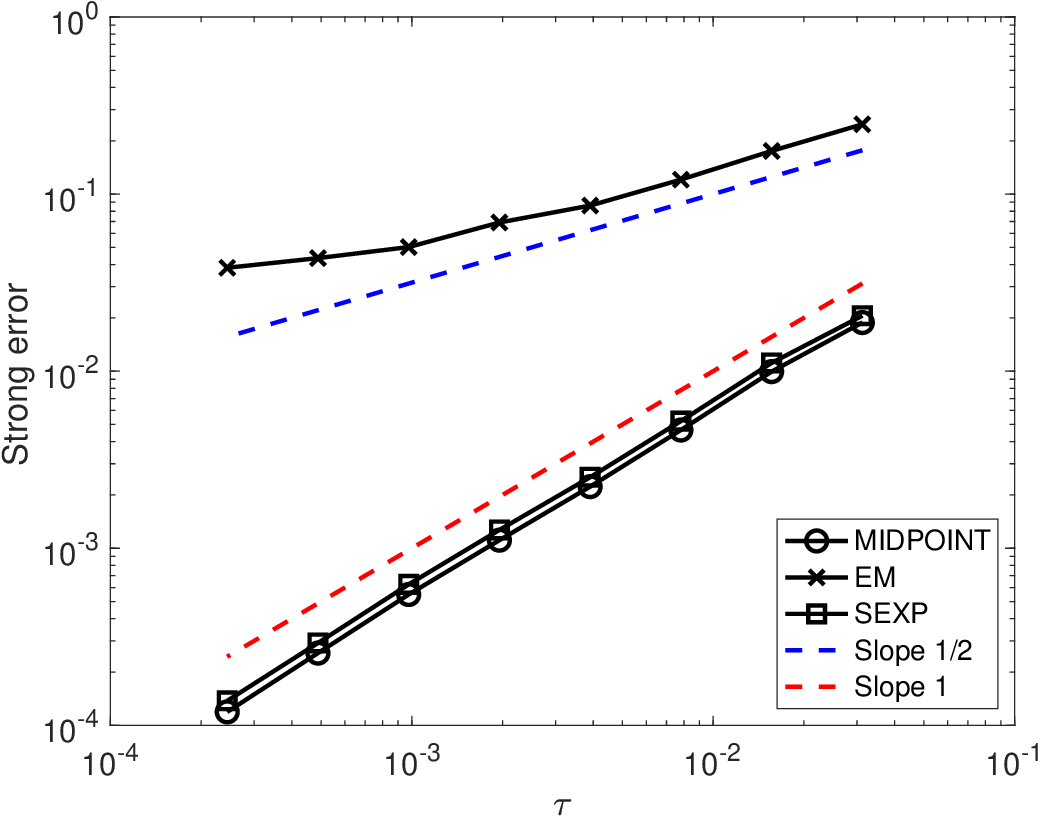}
\caption{Linearly damped stochastic Poisson system~\eqref{eq:Cao}: Strong convergence.}
\label{fig:Cao}
\end{figure}

%

\bibliographystyle{plain}
\bibliography{labib}

\begin{thebibliography}{10}

\bibitem{MR3456972}
A.~Bhatt, D.~Floyd, and B.~E. Moore.
\newblock Second order conformal symplectic schemes for damped {H}amiltonian
  systems.
\newblock {\em J. Sci. Comput.}, 66(3):1234--1259, 2016.

\bibitem{MR629977}
J.-M. Bismut.
\newblock {\em M\'{e}canique al\'{e}atoire}, volume 866 of {\em Lecture Notes
  in Mathematics}.
\newblock Springer-Verlag, Berlin-New York, 1981.
\newblock With an English summary.

\bibitem{MR3642447}
S.~Blanes and F.~Casas.
\newblock {\em A {C}oncise {I}ntroduction to {G}eometric {N}umerical
  {I}ntegration}.
\newblock Monographs and Research Notes in Mathematics. CRC Press, Boca Raton,
  FL, 2016.

\bibitem{MR4593213}
C.-E. Br\'{e}hier, D.~Cohen, and T.~Jahnke.
\newblock Splitting integrators for stochastic {L}ie-{P}oisson systems.
\newblock {\em Math. Comp.}, 92(343):2167--2216, 2023.

\bibitem{MR4486639}
J.~Cai and J.~Chen.
\newblock Efficient dissipation-preserving approaches for the damped nonlinear
  {S}chr\"{o}dinger equation.
\newblock {\em Appl. Numer. Math.}, 183:173--185, 2023.

\bibitem{MR3210739}
D.~Cohen and G.~Dujardin.
\newblock Energy-preserving integrators for stochastic {P}oisson systems.
\newblock {\em Commun. Math. Sci.}, 12(8):1523--1539, 2014.

\bibitem{MR2784654}
D.~Cohen and E.~Hairer.
\newblock Linear energy-preserving integrators for {P}oisson systems.
\newblock {\em BIT}, 51(1):91--101, 2011.

\bibitem{dambrosio2024strongbackwarderroranalysis}
R.~D'Ambrosio and S.~Di Giovacchino.
\newblock Strong backward error analysis of stochastic {P}oisson integrators,
  2024.

\bibitem{ephrati2024exponentialmapfreeimplicit}
S.~Ephrati, E.~Jansson, A.~Lang, and E.~Luesink.
\newblock An exponential map free implicit midpoint method for stochastic
  {L}ie--{P}oisson systems, 2024.

\bibitem{MR2839393}
K.~Feng and M.~Qin.
\newblock {\em Symplectic {G}eometric {A}lgorithms for {H}amiltonian
  {S}ystems}.
\newblock Zhejiang Science and Technology Publishing House, Hangzhou; Springer,
  Heidelberg, 2010.
\newblock Translated and revised from the Chinese original, With a foreword by
  Feng Duan.

\bibitem{MR1411343}
O.~Gonzalez.
\newblock Time integration and discrete {H}amiltonian systems.
\newblock {\em J. Nonlinear Sci.}, 6(5):449--467, 1996.

\bibitem{MR1044958}
B.~Grammaticos, J.~Moulin~Ollagnier, A.~Ramani, J.-M. Strelcyn, and
  S.~Wojciechowski.
\newblock Integrals of quadratic ordinary differential equations in {${\bf
  R}^3$}: the {L}otka-{V}olterra system.
\newblock {\em Phys. A}, 163(2):683--722, 1990.

\bibitem{MR2840298}
E.~Hairer, Ch. Lubich, and G.~Wanner.
\newblock {\em Geometric {N}umerical {I}ntegration}, volume~31 of {\em Springer
  Series in Computational Mathematics}.
\newblock Springer, Heidelberg, 2010.
\newblock Structure-preserving algorithms for ordinary differential equations,
  Reprint of the second (2006) edition.

\bibitem{MR4203112}
J.~Hong, J.~Ruan, L.~Sun, and L.~Wang.
\newblock Structure-preserving numerical methods for stochastic {P}oisson
  systems.
\newblock {\em Commun. Comput. Phys.}, 29(3):802--830, 2021.

\bibitem{MR1214374}
P.E. Kloeden and E.~Platen.
\newblock {\em Numerical {S}olution of {S}tochastic {D}ifferential
  {E}quations}, volume~23 of {\em Applications of Mathematics (New York)}.
\newblock Springer-Verlag, Berlin, 1992.

\bibitem{MR2408499}
J.-A. L\'{a}zaro-Cam\'{\i} and J.-P. Ortega.
\newblock Stochastic {H}amiltonian dynamical systems.
\newblock {\em Rep. Math. Phys.}, 61(1):65--122, 2008.

\bibitem{MR2132573}
B.~Leimkuhler and S.~Reich.
\newblock {\em Simulating {H}amiltonian {D}ynamics}, volume~14 of {\em
  Cambridge Monographs on Applied and Computational Mathematics}.
\newblock Cambridge University Press, Cambridge, 2004.

\bibitem{MR3516867}
Y.-W. Li and X.~Wu.
\newblock Exponential integrators preserving first integrals or {L}yapunov
  functions for conservative or dissipative systems.
\newblock {\em SIAM J. Sci. Comput.}, 38(3):A1876--A1895, 2016.

\bibitem{MR4300344}
K.~Liu, T.~Fu, and W.~Shi.
\newblock A dissipation-preserving scheme for damped oscillatory {H}amiltonian
  systems based on splitting.
\newblock {\em Appl. Numer. Math.}, 170:242--254, 2021.

\bibitem{MR4684229}
Q.~Liu and L.~Wang.
\newblock Lie-{P}oisson numerical method for a class of stochastic
  {L}ie-{P}oisson systems.
\newblock {\em Int. J. Numer. Anal. Model.}, 21(1):104--119, 2024.

\bibitem{MR3308418}
G.J. Lord, C.E. Powell, and T.~Shardlow.
\newblock {\em An {I}ntroduction to {C}omputational {S}tochastic {PDE}s}.
\newblock Cambridge Texts in Applied Mathematics. Cambridge University Press,
  New York, 2014.

\bibitem{MR2380366}
X.~Mao.
\newblock {\em Stochastic {D}ifferential {E}quations and {A}pplications}.
\newblock Horwood Publishing Limited, Chichester, second edition, 2008.

\bibitem{MR1951908}
G.~N. Milstein, Yu.~M. Repin, and M.~V. Tretyakov.
\newblock Numerical methods for stochastic systems preserving symplectic
  structure.
\newblock {\em SIAM J. Numer. Anal.}, 40(4):1583--1604, 2002.

\bibitem{MR4369963}
G.N. Milstein and M.V. Tretyakov.
\newblock {\em Stochastic {N}umerics for {M}athematical {P}hysics}.
\newblock Scientific Computation. Springer, Cham, [2021] \copyright 2021.
\newblock Second edition [of 2069903].

\bibitem{MR1735312}
T.~Misawa.
\newblock Conserved quantities and symmetries related to stochastic dynamical
  systems.
\newblock {\em Ann. Inst. Statist. Math.}, 51(4):779--802, 1999.

\bibitem{MR2855436}
K.~Modin and G.~S\"{o}derlind.
\newblock Geometric integration of {H}amiltonian systems perturbed by
  {R}ayleigh damping.
\newblock {\em BIT}, 51(4):977--1007, 2011.

\bibitem{MR2573261}
B.~E. Moore.
\newblock Conformal multi-symplectic integration methods for forced-damped
  semi-linear wave equations.
\newblock {\em Math. Comput. Simulation}, 80(1):20--28, 2009.

\bibitem{MR4242161}
B.~E. Moore.
\newblock Exponential integrators based on discrete gradients for linearly
  damped/driven {P}oisson systems.
\newblock {\em J. Sci. Comput.}, 87(2):Paper No. 56, 18, 2021.

\bibitem{MR2994297}
B.~E. Moore, L.~Nore\~{n}a, and C.~M. Schober.
\newblock Conformal conservation laws and geometric integration for damped
  {H}amiltonian {PDE}s.
\newblock {\em J. Comput. Phys.}, 232:214--233, 2013.

\bibitem{MR2451073}
G.~R.~W. Quispel and D.~I. McLaren.
\newblock A new class of energy-preserving numerical integration methods.
\newblock {\em J. Phys. A}, 41(4):045206, 7, 2008.

\bibitem{MR1270017}
J.~M. Sanz-Serna and M.~P. Calvo.
\newblock {\em Numerical {H}amiltonian {P}roblems}, volume~7 of {\em Applied
  Mathematics and Mathematical Computation}.
\newblock Chapman \& Hall, London, 1994.

\bibitem{MR4396382}
L.~Wang, P.~Wang, and Y.~Cao.
\newblock Numerical methods preserving multiple {H}amiltonians for stochastic
  {P}oisson systems.
\newblock {\em Discrete Contin. Dyn. Syst. Ser. S}, 15(4):819--836, 2022.

\bibitem{MR4432606}
Y.~Wang, L.~Wang, and Y.~Cao.
\newblock Structure-preserving numerical methods for a class of stochastic
  {P}oisson systems.
\newblock {\em Int. J. Numer. Anal. Model.}, 19(2-3):194--219, 2022.

\bibitem{MR4512617}
G.~Yang, K.~Burrage, Y.~Komori, and X.~Ding.
\newblock A new class of structure-preserving stochastic exponential
  {R}unge-{K}utta integrators for stochastic differential equations.
\newblock {\em BIT}, 62(4):1591--1623, 2022.

\bibitem{MR3898821}
G.~Yang, Q.~Ma, X.~Li, and X.~Ding.
\newblock Structure-preserving stochastic conformal exponential integrator for
  linearly damped stochastic differential equations.
\newblock {\em Calcolo}, 56(1):Paper No. 5, 20, 2019.

\end{thebibliography}

\section{Acknowledgements}
The work of CEB is partially supported by the SIMALIN (ANR-19-CE40-0016) project operated by the French National Research Agency.
The work of DC was partially supported by the Swedish Research Council (VR) (projects nr. $2018-04443$
and $2024-04536$) and partially supported by the European Union (ERC, StochMan, 101088589, PI A. Lang).
Views and opinions expressed are however those of the author(s) only and do not necessarily reﬂect those
of the European Union or the European Research Council. Neither the European Union nor the granting authority can be held responsible for them.
The work of YK was partially supported by JSPS Grant-in-Aid for Scientific Research 22K03416.
The computations were performed on resources provided by
the National Academic Infrastructure for Supercomputing in Sweden (NAISS) at Dardel, KTH, and at Vera, Chalmers e-Commons
at Chalmers University of Technology and partially funded by the Swedish Research Council
through grant agreement no. 2022-06725.

\end{document}